\documentclass[12pt,twoside]{amsart}
\usepackage[utf8]{inputenc} 
\usepackage[T1]{fontenc}

\usepackage{amsmath,amssymb,amsfonts,amsthm,mathtools}

\usepackage{graphicx} 
\usepackage{tikz} \usetikzlibrary{arrows.meta,calc} 
\usepackage{tikz-cd} 
\usepackage[all]{xy} 
\usepackage{subcaption} 

\usepackage{array} 
\usepackage{booktabs} 
\usepackage{tabularx} 
\usepackage{colortbl} 

\usepackage{xcolor} 

\usepackage{thmtools} 

\usepackage{leftidx} 
\usepackage{enumitem} 
\usepackage{verbatim} 
\usepackage[normalem]{ulem} 
\usepackage{geometry} 

\usepackage[colorlinks=true]{hyperref}

\usetikzlibrary{patterns, decorations.pathreplacing}

\numberwithin{equation}{section}

\DeclareMathOperator{\supp}{supp}
\DeclareMathOperator{\FB}{FB}
\DeclareMathOperator{\ev}{ev}
\DeclareMathOperator{\Span}{Span}
\DeclareMathOperator{\MW}{MW}

\newcommand{\F}{\mathbb{F}}

\newcommand{\Z}{\mathbb{Z}}

\theoremstyle{plain} 
\newtheorem{theorem}{Theorem}[section]
\newtheorem{lemma}[theorem]{Lemma}
\newtheorem{proposition}[theorem]{Proposition}
\newtheorem{corollary}[theorem]{Corollary}

\theoremstyle{definition}
\newtheorem{definition}[theorem]{Definition} 
 
\newtheorem{remark}[theorem]{Remark}
\newtheorem{example}[theorem]{Example}

\title[Polynomials with restricted support and maximal zeros]{Polynomials with restricted support and maximal zeros on a finite Cartesian set}

\author[Bhunia]{Dipak K. Bhunia}
\address{Dipak K. Bhunia\\
Universitat Polit\`ecnica de Catalunya,
Barcelona, Spain}
\email{dipak.kumar.bhunia@upc.edu}

\author[Camps-Moreno]{Eduardo Camps-Moreno}
\address{Eduardo Camps-Moreno\\
Universit\'e de Bordeaux, Talence, France}
\email{eduardo.camps-moreno@math.u-bordeaux.fr}

\author[García-Marco]{Ignacio García-Marco}
\address{Ignacio García-Marco\\
Instituto de Matem\'aticas y Aplicaciones (IMAULL), Secci\'on de Matem\'aticas, Facultad de
Ciencias, Universidad de La Laguna, Spain}
\email{iggarcia@ull.edu.es}

\author[L\'opez]{Hiram H. L\'opez}
\address{Hiram H. L\'opez\\
Virginia Tech,
Blacksburg, VA, USA}
\email{hhlopez@vt.edu}

\author[Márquez-Corbella]{Irene Márquez-Corbella}
\address{Irene Márquez-Corbella\\
Instituto de Matem\'aticas y Aplicaciones (IMAULL), Secci\'on de Matem\'aticas, Facultad de
Ciencias, Universidad de La Laguna, Spain}
\email{imarquec@ull.edu.es}

\thanks{D. K. Bhunia is supported by the Juan de la Cierva 2024 grant (JDC2024-053082-I), funded by Ministerio de Ciencia, Innovación y Universidades (MICIU/AEI/10.13039/501100011033) and co-funded by the European Social Fund Plus (FSE+).  \\  I. Garc\'ia-Marco  and I. M\'arquez-Corbella are supported by the project MICINN PID2023-149508NB-I00, funded by MICIU/AEI/10.13039/501100011033 and by FEDER, EU. \\ D. K. Bhunia, I. Garc\'ia-Marco and I. M\'arquez-Corbella were supported through the Talentum program by the Cabildo Insular de Tenerife in collaboration with the Universidad de La Laguna. \\ H. H. L\'opez was partially supported by the Commonwealth Cyber Initiative and by the NSF grants DMS-2401558 and DMS-2502705.}
\begin{document}

\begin{abstract}
Given a finite Cartesian set $S=X \times Y$ and a decreasing set of monomials $\mathcal M$, we call extremal polynomials for $\mathcal M$ over $S$ those that have the maximum number of zeros in $S$ and whose support belongs to $\mathcal M$. Coordinate factorizations give a family of extremal polynomials; we call them canonical. If $\max(\mathcal M)$, taken with respect to divisibility, is a single monomial, all extremal polynomials are canonical. If $|\max(\mathcal M)|=2$, either all extremal polynomials are canonical, or the problem reduces to the case where $\max(\mathcal M)=\{x^{d_x},y^{d_y}\}$. In the latter case, we prove that the existence of noncanonical extremal polynomials depends on finding families of subsets whose elementary symmetric functions agree. This condition is more restrictive than the classical Prouhet--Tarry--Escott problem, which asks for two sets whose elementary symmetric functions agree. We determine the number of triples $(X, Y, h)$, where $h$ is a quadratic noncanonical extremal polynomial. We apply extremal polynomials to coding theory via minimum-weight codewords.
\end{abstract}

\maketitle

\section{Introduction}
Let $\F$ be a field and let $S=X\times Y\subseteq \F^2$ be a finite Cartesian product. Given a polynomial $f\in \F[x,y]$, a classical problem is to determine the maximum number of zeros that $f$ can have on $S$. While classical bounds are expressed in terms of the total degree of $f$
\cite{AlonFuredi93,DeMilloLipton78,Schwartz80,Zippel79}, the {\it footprint bound}~\cite{geilfp} replaces the degree by the leading monomial. Geil~\cite{Geil2026} has recently argued that this is the natural
multivariate analog of the factor theorem. Footprint techniques have been used to determine the
maximum number of zeros of polynomials in other settings, such as weighted
projective spaces over finite fields~\cite{NardiSanJose2026}. 

In this paper we work in the affine setting $S=X\times Y$ and study the problem
under support restrictions: given a decreasing set of monomials $\mathcal M$ (i.e., closed
under divisibility), we consider the polynomials whose support is contained in $\mathcal M$.
The footprint bound yields an upper bound $z_S(\mathcal M)$ for their number of zeros on $S$,
which is attained (Proposition~\ref{prop.notempty}). We call extremal polynomials for $\mathcal M$ over $S$
those attaining it, and we denote by $Z_S(\mathcal M)$ the set of them.  Thus, our goal is to understand the structure of the elements of $Z_S(\mathcal M)$.

A family of extremal polynomials is immediately obtained from coordinate factorizations of the form
\[
c\prod_{\alpha\in X'}(x-\alpha)\prod_{\beta\in Y'}(y-\beta),
\]
with $c \in \F \setminus \{0\}$, $X' \subseteq X$ and $Y' \subseteq Y$; whose zero sets are unions of rows and columns of the grid. We denote by $\Pi_S(\mathcal M)$ the set of extremal polynomials of this type supported on $\mathcal M$. A natural question is whether every polynomial in $Z_S(\mathcal M)$ belongs to $\Pi_S(\mathcal M)$, and if not, how to describe the additional extremal polynomials that arise. We call the elements of $\Pi_S(\mathcal M)$ canonical, and those of $Z_S(\mathcal M) \setminus \Pi_S(\mathcal M)$ noncanonical.

Our work builds on structural results of Geil~\cite{geil} concerning polynomials attaining the footprint bound (see also~\cite{Geil2026} for a unified treatment of common roots of sets of polynomials). Denoting by $\max(\mathcal M)$ the set of maximal elements of $\mathcal M$ with respect to divisibility, we use Geil's results to prove that if $|\max(\mathcal M)|=1$, then all extremal polynomials are canonical (Proposition~\ref{cor.onemon}). The same holds if $|\max(\mathcal M)|=2$ and the two maximal monomials have different footprint values (Theorem~\ref{thm.diffoot}). If they have the same footprint value, after removing common coordinate factors the problem reduces to $\max(\mathcal M)=\{x^{d_x},y^{d_y}\}$ with $|X|d_y=|Y|d_x$ (Proposition~\ref{prop.factorh}), which is the first genuinely nontrivial situation in the study of extremal polynomials on Cartesian grids. In this case, noncanonical extremal polynomials can exist only if $t=|X|/d_x=|Y|/d_y$ is an integer, a condition that is necessary but not sufficient (see Example~\ref{ex.11}). Moreover, every such polynomial has the form $h(x,y)=f(x)+g(y)$, the sets $X$ and $Y$ decompose into blocks $X=X_1\sqcup\dots\sqcup X_t$, $Y=Y_1\sqcup\dots\sqcup Y_t$ with $|X_i|=d_x$, $|Y_i|=d_y$, and $f$ (resp.\ $g$) is constant equal to $\lambda_i$ (resp.\ $-\lambda_i$) on $X_i$ (resp.\ $Y_i$), with the $\lambda_i$ pairwise distinct.

A second contribution of the paper is the connection between this extremal-zero problem and the Prouhet--Tarry--Escott  (PTE) problem, a classical problem in Diophantine analysis that asks for disjoint multisets of integers whose power sums agree up to a prescribed degree~\cite{BorweinIngalls94, Dickson52}.  Indeed, the block decomposition obtained above implies that the associated subsets satisfy equalities between elementary symmetric functions, and consequently equalities between power sums when the characteristic is zero or sufficiently large. However, our problem is more restrictive: the classical PTE problem allows multisets (with repeated elements), whereas each subset $X_i$ in our setting must consist of pairwise distinct elements. The existence of solutions to the PTE equations therefore does not by itself guarantee the existence of noncanonical extremal polynomials (see Example~\ref{ex.11}). Despite its long history, no systematic methods exist to construct ideal solutions to the PTE problem~\cite{BorweinLisonekPercival03, PTE-survey}. We show that classifying noncanonical extremal polynomials can be reformulated in terms of the existence of finite-field analogs of Prouhet--Tarry--Escott configurations.

After establishing the general structure theory, we focus on the first nontrivial degrees, namely $d_x,d_y\leq 2$. In this setting, the problem becomes sufficiently rigid to allow explicit enumeration. We determine the admissible triples $(X, Y,h)$ for which $h(x,y)=f(x)+g(y)$ attains the maximum possible number of zeros, and we obtain exact counting formulas. The analysis naturally splits into characteristic $2$ and odd characteristic. In characteristic $2$, the linear structure of quadratic maps governs the description. In contrast, in odd characteristic, the computation reduces to intersections of images of quadratic polynomials and can be expressed in terms of quadratic residues and character sums.

Extremal polynomials play an important role in algebraic coding theory, where the number of zeros of a polynomial on a finite set is directly related to the Hamming weight of the corresponding evaluation codeword. Minimum-weight codewords are of particular interest: they realize the minimum distance of the code and their number governs the leading term of the standard estimates of the decoding error probability~\cite{LinCostello04, SasonShamai06}, so knowing their structure and multiplicity is essential to assess the performance of a code. Moreover, their explicit description often reveals a rich geometric structure, such as hyperplane arrangements~\cite {BalletRolland14, RM}. Delsarte, Goethals, and MacWilliams characterized the minimum-weight codewords of Reed--Muller codes~\cite {RM}; see also~\cite{Leducq12} for an alternative proof. Carvalho and Neumann characterized affine Cartesian codes when the sets are nested subfields of $\mathbb{F}_q$~\cite{CarvalhoNeumann20}, and Dang and Ghorpade gave the corresponding enumeration~\cite {enumNAC}. In this work, we study the minimum-weight codewords of monomial decreasing evaluation codes
\[
\mathcal C(\F_q^2,\mathcal M).
\]
For such codes, minimum-weight codewords correspond exactly to polynomials attaining the maximum number of zeros on the underlying grid. Consequently, the structural and enumerative results developed in the first part of the paper translate into descriptions and counting formulas for minimum-weight codewords. In particular, we characterize the minimum-weight codewords of decreasing codes with at most two maximal monomials and obtain explicit formulas in the quadratic cases considered above.

The paper is organized as follows. Section~\ref{sec:preliminaries} introduces the basic setting and reviews the footprint bound and the structural results needed throughout. Section~\ref{sec:reduction} develops the reduction theory and reduces the problem to decreasing sets with two maximal monomials. Section~\ref{sec:structure} studies this reduced situation and proves the existence and structure theorems for extremal polynomials. Section~\ref{sec:PTE} connects them to Prouhet--Tarry--Escott configurations. Section~\ref{sec:quadratic} focuses on the quadratic case and enumerates extremal polynomials over finite fields. Finally, Section~\ref{sec:codes} applies these results to monomial decreasing codes and determines the corresponding minimum-weight codewords.

\section{Preliminaries}\label{sec:preliminaries}

Throughout this paper, $\F$ denotes a field and
$S=X\times Y\subseteq\F^2$
is a finite Cartesian product with $n_x=|X|$ and $n_y=|Y|$. For a polynomial $f\in\F[x,y]$, we denote by
$Z_S(f) = \{P\in S : f(P)=0\}$ its zero set on $S$.
The vanishing ideal of $S$, defined as the set of polynomials that vanish on $S$, is (see, e.g., \cite[Lemma 2.3]{LRV14})
\[
I_S=
\left(
\prod_{\alpha\in X}(x-\alpha),
\;
\prod_{\beta\in Y}(y-\beta)
\right)
\subseteq\F[x,y].
\]
We also consider the set
\[
R_S
=
\left\{
f\in\F[x,y]:
\deg_x(f)<n_x,\;
\deg_y(f)<n_y
\right\},
\]
which is a complete system of representatives for
$\F[x,y]/I_S$. Consequently, every polynomial $f\in\F[x,y]$
has a unique representative $\overline f\in R_S$
satisfying
\[
f\equiv\overline f\pmod{I_S}.
\]
Since every element of $I_S$ vanishes on $S$, the polynomials
$h$ and $\overline h$ have the same zero set on $S$.
Therefore, when studying zeros on $S$, it is enough
to work with polynomials in $R_S$.
 By the footprint bound \cite{geilfp,yuriko},
if $f\in R_S$ and $<$ is a monomial order, then
\[
|Z_S(f)|\le \dim \F[x,y]/\bigl(in_<(I_S),in_<(f)\bigr).
\]
As $in_<(I_S)=(x^{n_x},y^{n_y})$ for every monomial order, we get
\[
\dim\F[x,y]/(x^{n_x},y^{n_y},x^ay^b)=n_xb+n_ya-ab
\]
for every $0 \leq a < n_x,\, 0 \leq b < n_y$. The lattice points in Figure~\ref{fig:footprint} represent the standard monomials of $\F[x,y]/(x^{n_x},y^{n_y},x^a y^b)$.

\begin{figure}[ht]
\centering
\begin{tikzpicture}[scale=1.15]
  \fill[blue!18] (0,0) rectangle (6,3);
  \fill[teal!22] (0,3) rectangle (4,5);
  \fill[gray!20] (4,3) rectangle (6,5);
  \fill[pattern=north east lines, pattern color=gray!60] (4,3) rectangle (6,5);

  \draw[very thin, gray!70] (0,0) grid (6,5);
  \draw[thick] (0,0) rectangle (6,5);
  \draw[dashed, thick, gray!80] (4,0) -- (4,5);
  \draw[dashed, thick, gray!80] (0,3) -- (6,3);
 
  \node[font=\small] at (3.0, 1.5) {$n_x \cdot b$};
  \node[font=\small] at (2.0, 4.0) {$a \cdot (n_y - b)$};
  \node[gray!70, font=\scriptsize, align=center] at (5.0, 4.0)
        {not\\standard};
  \node[below=2pt, font=\small] at (0,0) {$0$};
  \node[below=2pt, font=\small] at (4,0) {$a-1$};
  \node[below=2pt, font=\small] at (6,0) {$n_x-1$};
  \node[left=2pt, font=\small] at (0,3) {$b-1$};
  \node[left=2pt, font=\small] at (0,5) {$n_y-1$};
  
  \foreach \x in {0,4,6}
    \draw (\x, 0) -- ++(0,0);
  \foreach \y in {0,3,5}
    \draw (0,\y) -- ++(0,0);

  \filldraw (4,3) circle (1.8pt)
        node[above right, font=\small] {$(a-1,b-1)$};

  \draw[->] (0, 0) -- (6.5, 0)
        node[right, font=\small] {$i$};
  \draw[->] (0, 0) -- (0, 5.5)
        node[above, font=\small] {$j$};

\end{tikzpicture}
\caption{Standard monomials of $\F[x,y]/(x^{n_x},y^{n_y},x^a y^b)$ correspond to the lattice points in the L-shaped region. The bottom rectangle contributes $n_x b$ monomials and the upper-left rectangle contributes $a(n_y-b)$, giving
$\dim \F[x,y]/(x^{n_x},y^{n_y},x^a y^b) = n_x b + n_y a - ab$.}
\label{fig:footprint}
\end{figure}

The previous equality involving the dimension of the quotient ring motivates the following definition.
\[
z_S(a,b)=n_xb+n_ya-ab.
\]
Hence, for every monomial order $<$, $|Z_S(f)| \le z_S(a,b)$ whenever $in_{<}(f)=x^ay^b$. In other words,  if for a polynomial $f\in\F[x,y]$, 
\[in(f)=\{in_{<}(f): < \text{ a monomial order}\}
\]
denotes the set of all possible leading monomials of $f$, then 
\[ |Z_S(f)| \le \min\{ z_S(a,b): x^ay^b\in in(f) \}.\]
\begin{definition}
A nonzero polynomial $f\in R_S$ has \emph{maximal zeros on $S$} if
\[ |Z_S(f)| = \min\{ z_S(a,b): x^ay^b\in in(f) \}. \]
\end{definition}
The simplest examples of polynomials that have maximal zeros are products of coordinate factors
\[
\prod_{\alpha\in X'}(x-\alpha)
\prod_{\beta\in Y'}(y-\beta),
\]
where $X'\subseteq X$ and $Y'\subseteq Y$.
A key tool in our work is the following factor-removal result.

\begin{lemma}{\cite[Lemma 3]{geil}}\label{lemma.factors} Let $f\in R_S$ be such that
$$f(x,y)=h(x,y)\prod_{\alpha\in X'}(x-\alpha)\prod_{\beta\in Y'}(y-\beta)$$
where $X'\subseteq X$ and $Y'\subseteq Y$. Then, $f$ has maximal zeros on $S$ if and only if $h$ has maximal zeros on $(X\setminus X')\times (Y\setminus Y')$. 
\end{lemma}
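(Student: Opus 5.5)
We reduce the statement to a comparison of two quantities. Put $a'=|X'|$, $b'=|Y'|$ and $S'=(X\setminus X')\times(Y\setminus Y')$, so that $|S'|$ has side lengths $n_x-a'$ and $n_y-b'$. Write $c(x,y)=\prod_{\alpha\in X'}(x-\alpha)\prod_{\beta\in Y'}(y-\beta)$, so $f=hc$. We show two things: (i) $|Z_S(f)|$ and $|Z_{S'}(h)|$ differ by a fixed constant; (ii) the footprint bound for $f$ on $S$ and that for $h$ on $S'$ differ by the same constant.

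For (i) we count zeros directly. The zeros of $f$ on $S$ are the points of $S$ with $x\in X'$ or $y\in Y'$, together with the zeros of $h$ on the complementary grid. Outside the lines $x\in X'$ and $y\in Y'$ the factor $c$ does not vanish, so
\[
|Z_S(f)| = a'n_y + b'n_x - a'b' + |Z_{S'}(h)|.
\]

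For (ii) we first identify the possible leading monomials. Since $c$ is a product of monic factors whose leading monomial is $x^{a'}y^{b'}$ under every order, we have $in_<(f)=x^{a'}y^{b'}\,in_<(h)$ for every monomial order $<$. Hence $in(f)=x^{a'}y^{b'}\cdot in(h)$.

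We also need $h\in R_{S'}$. Because $f\in R_S$, we have $\deg_x h=\deg_x f-a'<n_x-a'$, and similarly in $y$. So the footprint bound on $S'$ applies to $h$ with the same definition.

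Next we compare the bounds. Let $x^ay^b\in in(h)$ and compute $z_S(a+a',b+b')$. Expanding
\[
n_x(b+b')+n_y(a+a')-(a+a')(b+b')
\]
and regrouping gives
\[
\bigl[(n_x-a')b+(n_y-b')a-ab\bigr] + \bigl[a'n_y+b'n_x-a'b'\bigr].
\]
The first bracket is $z_{S'}(a,b)$. So $z_S(a+a',b+b')=z_{S'}(a,b)+K$, where $K=a'n_y+b'n_x-a'b'$ is the same constant as in (i).

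We finish by minimizing over $in(h)$, using the correspondence from (ii):
\[
\min_{in(f)} z_S = \min_{in(h)} z_{S'} + K.
\]
Combined with (i), equality for $f$ holds if and only if equality for $h$ holds. This is the claimed equivalence.

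The main point needing care is the claim $in(f)=x^{a'}y^{b'}in(h)$. It rests on the leading term of a product being the product of leading terms, which holds in any polynomial ring over a field under a monomial order, and on $c$ having the same leading monomial for all orders. Both facts are routine. The remaining steps are bookkeeping.
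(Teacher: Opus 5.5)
Your proof is correct. The paper gives no proof of this lemma; it is quoted from Geil's Lemma~3, so there is no in-paper argument to compare with. Your proposal is a complete, self-contained verification in the two-variable setting.

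Both ingredients you use are the right ones. The first is the zero count $|Z_S(f)|=K+|Z_{S'}(h)|$. Every point of $S$ off $S'$ is a zero of the coordinate factor, and a point of $S'$ is a zero of $f$ exactly when it is a zero of $h$. The second is the identity $in(f)=x^{a'}y^{b'}\,in(h)$. It holds because $x^{a'}y^{b'}$ is the leading monomial of $c$ under every monomial order: every monomial of $c$ divides it, and its coefficient is $1$.

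Two small remarks. First, the definition of maximal zeros requires a nonzero polynomial, so you also need $h\neq 0$; this is immediate from $f\neq 0$. Second, the regrouping in (ii) becomes transparent once you note that $K=|S|-|S'|$ and $z_S(a,b)=|S|-(n_x-a)(n_y-b)$. Then
\[
z_S(a+a',b+b')=|S|-\bigl((n_x-a')-a\bigr)\bigl((n_y-b')-b\bigr)=(|S|-|S'|)+z_{S'}(a,b).
\]
Written this way, the shift constant is visibly the same one as in the zero count. The argument then carries over unchanged to Cartesian products in any number of variables.
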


The next theorem, due to Geil, gives a strong restriction
on the possible leading monomials of a polynomial with maximal
zeros.

\begin{theorem}{\cite[Theorem 5]{geil}}\label{th.maingeil}
Let $f\in R_S$ be with maximal zeros on $S$. Define
\[
M=\{x^ay^b\in\mathrm{supp}(f)\ :\ z_S(a,b)=|Z_S(f)|\}.
\]
If $(x-\alpha)\not{|}f$ and $(y-\beta)\not{|}f$ for any $\alpha\in X$, $\beta\in Y$, then $in(f)\subseteq M$ and $\gcd(M)=1$.
\end{theorem}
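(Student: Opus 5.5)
The plan is to count the zeros of $f$ row by row (and, symmetrically, column by column) and to compare this count with $z_S(a,b)=n_xn_y-(n_x-a)(n_y-b)$ at the vertices of the Newton polygon of $f$. Recall that $in(f)$ is exactly the set of vertices of the Newton polygon that are exposed by some positive weight $w=(w_x,w_y)$, refined to a monomial order. Put $N=|Z_S(f)|$. By the footprint bound, $N\le z_S(a,b)$ for every $x^ay^b\in in(f)$, and by maximality equality holds for at least one of them.

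\emph{Step 1 (the bound at a leading monomial).} For $x^ay^b\in in(f)$, choose a weight $w$ and an order $<$ with $in_<(f)=x^ay^b$. Write $f=\sum_{i}c_i(y)x^i$. The hypothesis $(y-\beta)\nmid f$ says that $f(x,\beta)\ne 0$ for every $\beta\in Y$. Since $\deg_x f<n_x$, each row $\{y=\beta\}$ then contributes fewer than $n_x$ zeros. I would split the rows into two classes:
\begin{itemize}
\item rows in which the monomial $x^a$ "survives", bounded by $a$ zeros each via the univariate factor theorem applied to a suitable truncation;
\item rows in which it does not, bounded by $n_x$ zeros each.
\end{itemize}
The number of rows of the second kind is controlled by $b$ through the degree of the relevant coefficient. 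For the lex order this gives exactly $N\le a(n_y-b)+bn_x$. For a general weight, I would use the initial form $in_w(f)$, which is the sum over the edge or vertex of the Newton polygon exposed by $w$, and run the same count on it. The expected result is a refined bound whose slack is explicit.

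\emph{Step 2 ($in(f)\subseteq M$).} Suppose some vertex $x^ay^b\in in(f)$ has $z_S(a,b)>N$. Walk along the lower boundary of the Newton polygon from a vertex attaining $N$ toward $x^ay^b$. Along an edge joining vertices $x^{a_1}y^{b_1}$ and $x^{a_2}y^{b_2}$, the function $z_S$ restricted to the segment is a quadratic $n_xn_y-(n_x-a)(n_y-b)$ in the parameter, concave or convex according to the edge slope. Using the equality case of Step 1 for the minimizing vertex, equality must propagate: every row count is sharp. I would show that sharpness at one endpoint forces the initial form on the adjacent edge to vanish on the same points. That in turn forces $z_S$ to take the same value at the other endpoint. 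Iterating over the edges shows that every vertex in $in(f)$ attains $N$, hence lies in $M$.

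\emph{Step 3 ($\gcd(M)=1$).} Suppose, say, $x\mid m$ for every $m\in M$. Then every monomial $x^0y^b$ in $\mathrm{supp}(f)$ satisfies $z_S(0,b)=n_xb>N$. Consider a column $x=\alpha$ with $\alpha\in X$. The specialization $f(\alpha,y)$ is nonzero, because $(x-\alpha)\nmid f$. Summing the column counts and comparing with the sharp row counts from Step 1 should produce a strict inequality $N<z_S(a,b)$ for the minimizing vertex, contradicting maximality. The case $y\mid m$ for all $m\in M$ is symmetric.

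The main obstacle is Step 2. A general monomial order sees only a vertex of the Newton polygon, and for a non-lex weight the row-by-row count does not directly produce $z_S(a,b)$. The first thing I would try is the weight-degeneration argument: a flat deformation $f_t=t^{-c}f(t^{w_x}x,t^{w_y}y)$ is not available over an arbitrary finite set $S$. Instead, I would argue directly with the edge initial forms and the two-sided (row and column) counts, using the no-linear-factor hypothesis to exclude fully vanishing rows and columns.
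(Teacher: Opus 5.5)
\textbf{Step~2 fails, and in fact the statement cannot be proved as written.} The paper gives no proof of this theorem; it only cites Geil. Your Step~2 cannot succeed, because the claim it is meant to establish is false under the hypothesis as stated, namely $|Z_S(f)|$ equal to the \emph{minimum} of $z_S$ over $in(f)$.

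Over any field of characteristic $\neq 2$, take $X=\{0,1,2\}$, $Y=\{0,1\}$ and $f=x-y$. Then:
\begin{itemize}
\item $f\in R_S$, and $f$ has no factor $x-\alpha$ or $y-\beta$;
\item $in(f)=\{x,y\}$ and $Z_S(f)=\{(0,0),(1,1)\}$;
\item $|Z_S(f)|=2=z_S(1,0)<z_S(0,1)=3$, so $f$ has maximal zeros.
\end{itemize}
The row counts are sharp at the vertex $x$ (exactly one zero per row), yet the other endpoint of the only Newton edge has a strictly larger footprint value. Nothing propagates. Here $M=\{x\}$, so both $in(f)\subseteq M$ and $\gcd(M)=1$ fail. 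The column count of your Step~3 only gives $|Z_S(f)|\le 3$, which is no contradiction. The same happens for $x+y^2$ on $\F_q^2$ with $q\ge 3$.

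What is true, and all that Proposition~\ref{prop.linearfactors} uses, is the version with the stronger hypothesis $|Z_S(f)|=z_S(a,b)$ for \emph{every} $x^ay^b\in in(f)$. This holds for $f\in Z_S(\mathcal M)$, since there $z_S(a,b)\le z_S(\mathcal M)=|Z_S(f)|\le z_S(a,b)$. Under this hypothesis $in(f)\subseteq M$ is immediate, so Step~2 disappears. Step~1 is superfluous as well, since the footprint bound is already available for every monomial order.

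\textbf{What Step~3 is missing for the corrected statement.} As written, Step~3 asserts $z_S(0,b)=n_xb>N$ for pure powers $y^b$ in the support without justification; under the correct hypothesis the inequality goes the other way. The missing idea is to work with the two lexicographic leading monomials rather than an arbitrary minimizing vertex.

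Put $A=\deg_x f$ and $B=\deg_y f$. Since $(y-\beta)\nmid f$, each $f(x,\beta)$ is a nonzero polynomial of degree at most $A$, so $N\le n_yA$. Symmetrically, $N\le n_xB$. The leading monomial for lex with $x>y$ is $x^Ay^{b'}$, where $b'$ is the $y$-degree of the coefficient of $x^A$. This monomial lies in $in(f)$, so
\[
N=z_S(A,b')=n_yA+b'(n_x-A)\ge n_yA\ge N .
\]
Since $A<n_x$, this forces $b'=0$, so $x^A\in M$. Likewise $y^B\in M$, hence $\gcd(M)=1$. In the degenerate case $A=0$, we get $N=0$, which forces $B=0$ and $M=\{1\}$. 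No analysis of Newton-polygon edges or weight degenerations is needed.
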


Motivated by these results, we study polynomials whose support
is contained in a decreasing set of monomials.

\begin{definition}
Let $\mathcal M\subseteq \Delta(I_S)
=\{x^ay^b:0\le a<n_x,\;0\le b<n_y\}$ be a decreasing monomial set, meaning that if $u\in\mathcal M$ and $v$ divides $u$, then $v\in\mathcal M$. We define
\begin{itemize}
\item $z_S(\mathcal M)=
\max\{z_S(a,b):x^ay^b\in\mathcal M\}.$
\item $Z_S(\mathcal M)= \left\{ f\in R_S: \supp(f)\subseteq\mathcal M,\; |Z_S(f)|=z_S(\mathcal M) \right\}.$
\end{itemize}
The elements of $Z_S(\mathcal M)$ are called extremal polynomials for $\mathcal M$ on $S$.
\end{definition}
Observe that $Z_S(f)$ denotes a subset of the grid, whereas $Z_S(\mathcal M)$ denotes the family of extremal polynomials.

The structure of extremal polynomials $Z_S(\mathcal M)$ has been studied for some particular choices of $\mathcal M$ due to their applications. For example, in coding theory, extremal polynomials have been characterized when $\mathcal M$ consists of all monomials of degree at most $d$; see the following result.

\begin{theorem}{\cite[Theorem 2.6.3]{RM}}\label{thm:minRM}
Take $S=\F_q^2$ and the decreasing set
\[ \mathcal M = \left\{ x^ay^b: 0\leq a,b\leq q-1,\; a+b\leq d \right\}, \]
where $1\leq d\leq 2(q-1)-1$. Write $d=k(q-1)+\ell$, $1\leq\ell\leq q-1$.
Then,
\[ Z_S(\mathcal M) =
\bigcup_{\substack{X'\subseteq\F_q\\ |X'|=k(q-1)}} \; \bigcup_{\substack{Y'\subseteq\F_q\\ |Y'|=\ell}} \mathcal P(X',Y'),
\]
where
$$\mathcal P(X',Y') = 
\left\{
\prod_{\alpha\in X'} (\lambda_1x+\lambda_2y-\alpha) \prod_{\beta\in Y'} (\lambda_3x+\lambda_4y-\beta): \lambda_i\in\F_q, \lambda_1\lambda_4\neq\lambda_2\lambda_3
\right\}.$$
\end{theorem}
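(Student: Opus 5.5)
The plan splits on $k\in\{0,1\}$. Since $1\le d\le 2q-3$ and $1\le \ell\le q-1$, these are the only possible values. Throughout I use that the evaluation of total-degree-$\le d$ polynomials on $\F_q^2$ is invariant under affine changes of coordinates $(x,y)\mapsto A(x,y)+v$ with $A\in GL_2(\F_q)$. So it suffices to identify zero sets up to affine maps, and then read off the polynomial after reducing modulo $(x^q-x,y^q-y)$. First I compute $z_S(\mathcal M)=\max\{q(a+b)-ab : a+b\le d\}$. For $k=0$ this gives $\ell q$, attained at $(a,b)=(0,\ell)$. For $k=1$ it gives $q(q-1)+\ell$, attained at $(a,b)=(q-1,\ell)$. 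The products in $\mathcal P(X',Y')$ clearly reach these counts, since they are unions of parallel lines, respectively $q-1$ parallel lines together with $\ell$ lines of a second direction. The work is the converse.

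\emph{Case $k=0$ ($d=\ell\le q-1$).} Let $f$ have degree $\le\ell$ and exactly $\ell q$ zeros $Z$, and induct on $\ell$. First I show $Z$ contains a full line. Suppose not. Then every line meets $Z$ in at most $\ell$ points, since $f$ restricted to a line is a univariate polynomial of degree $\le\ell<q$ that is not identically zero. Double counting ordered pairs of distinct zeros over the $q^2+q$ lines gives
\[
\ell q(\ell q-1)=\sum_L |L\cap Z|(|L\cap Z|-1)\le (q^2+q)\ell(\ell-1).
\]
This simplifies to $0\le \ell(\ell-q)$, which contradicts $\ell<q$. So some line $L\subseteq Z$, and I write $f=L\cdot g$ with $\deg g\le \ell-1$. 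Then $g$ has at least $\ell q-q$ zeros off $L$. By the bound for degree $\ell-1$ (for $\ell=1$, $g$ is a constant), $g$ has exactly $(\ell-1)q$ zeros and none of them on $L$. By induction, $Z(g)$ is a union of $\ell-1$ parallel lines. These lines are disjoint from $L$, hence parallel to $L$. After an affine map this is $c\prod_{\beta\in Y'}(y-\beta)$, which is the stated form with $X'=\emptyset$.

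\emph{Case $k=1$ ($d=q-1+\ell$).} Now the nonzero set $N$ of $f$ has exactly $q-\ell$ points. The key tool is that $\sum_{P\in\F_q^2}u(P)=0$ for every polynomial $u$ of total degree $<2(q-1)$. Hence $\sum_{P\in N}f(P)g(P)=0$ for every $g$ with $\deg g\le q-2-\ell=|N|-2$. I claim $N$ is collinear. If it is not, choose $P\in N$ and two points $Q,Q'\in N\setminus\{P\}$ such that the line $QQ'$ misses $P$; this is possible for a noncollinear set. Cover $N\setminus\{P\}$ by the line $QQ'$ together with one line through each remaining point avoiding $P$. This uses $|N|-2$ lines, and their product $g$ gives $f(P)g(P)=0$ with $g(P)\ne0$, a contradiction. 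When $|N|\le 2$ the set is trivially collinear.

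So, after an affine map, $N\subseteq\{y=0\}$. Then $f$ vanishes on every line $y=\alpha$ with $\alpha\ne0$. Its reduced representative is therefore $(1-y^{q-1})h(x)$ with $h(x)=f(x,0)$. The degree constraint forces $\deg h\le\ell$: this is the step I expect to need the most care, comparing the $x^ay^{q-1}$ terms of the reduced form against total degree $q-1+\ell$. Since $h$ vanishes at exactly $q-|N|=\ell$ points of $\F_q$, we get $h=c\prod_{\beta\in Y'}(x-\beta)$ with $|Y'|=\ell$. Finally, $1-y^{q-1}=-\prod_{\alpha\ne0}(y-\alpha)$, so $f$ lies in $\mathcal P(X',Y')$ with $|X'|=q-1$, after undoing the affine map.
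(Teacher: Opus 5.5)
Your argument is correct. The paper gives no proof of this statement: it imports it from Delsarte--Goethals--MacWilliams, whose theorem covers generalized Reed--Muller codes in any number of variables and whose proof is considerably heavier. You give a self-contained two-variable proof resting on two elementary facts.

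\emph{Case $k=0$.} Double counting pairs of zeros over the $q^2+q$ affine lines forces a whole line into the zero set. The inequality actually reduces to $0\le \ell q(\ell-q)$ rather than $0\le\ell(\ell-q)$, which changes nothing. The bound for degree $\ell-1$ then lets you peel off parallel linear factors inductively.

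\emph{Case $k=1$.} The vanishing of $\sum_{P\in\F_q^2}u(P)$ for $\deg u<2(q-1)$ (duality between the codes of orders $d$ and $2q-3-d$) forces the $q-\ell$ nonzeros to be collinear, and the polynomial is then read off.

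The citation buys generality in the number of variables. Your route buys transparency and independence from the footprint machinery used elsewhere in the paper. The step you flagged as delicate is immediate. Reduction modulo $(x^q-x,y^q-y)$ never raises total degree, and reduced representatives are unique, so $(1-y^{q-1})h(x)$ has total degree at most $q-1+\ell$. The monomials $x^ay^{q-1}$ of $-y^{q-1}h(x)$ cannot cancel against those of $h(x)$, so $\deg h\le\ell$.

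One correction to your final identifications. In both cases you arrive at $c$ times a product of affine forms, with an arbitrary $c\in\F_q^*$. Such a $c$ cannot in general be absorbed into $\mathcal P(X',Y')$: rescaling the linear forms only produces $\ell$-th powers, since the block of $q-1$ factors contributes $c^{q-1}=1$. For example, over $\F_5$ with $d=2$, the polynomial $2y(y-1)$ has $10=\ell q$ zeros. It is not of the form $(\lambda_3x+\lambda_4y-\beta_1)(\lambda_3x+\lambda_4y-\beta_2)$, because its leading form $2y^2$ would have to be $(\lambda_3x+\lambda_4y)^2$, and $2$ is not a square in $\F_5$.

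So the statement as printed needs an extra factor $c\in\F_q^*$. For $k=1$ its products must also be read modulo $I_S$, since they can have $\deg_x\ge q$ while $Z_S(\mathcal M)\subseteq R_S$. This corrected version is exactly what your argument establishes.
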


\begin{remark}
Note that if $f \in Z_S(\mathcal M)$, then $f$ has maximal zeros on $S$. 
Indeed, by the footprint bound, and as $\supp(f)\subseteq\mathcal M$, we get
$$|Z_S(f)| \leq \min\{ z_S(a,b): x^ay^b\in in(f) \}
\leq \max\{z_S(a,b):x^ay^b\in\mathcal M\}=z_S(\mathcal M).$$
Thus, $|Z_S(f)| = \min\{ z_S(a,b): x^ay^b\in in(f) \}$, which means that $Z_S(\mathcal M)$ is the set of polynomials attaining the largest possible number of zeros among all polynomials supported on $\mathcal M$.
\end{remark}
\begin{definition}
For a set of monomials $\mathcal M$, we denote by
$$ \max(\mathcal M) $$
the set of monomials in $\mathcal M$ that are maximal with respect to divisibility.
\end{definition}
We show below that a distinguished family of elements of $Z_S(\mathcal M)$ arises from coordinate factorizations. The main objective of this paper is to describe the structure of the sets $Z_S(\mathcal M)$ and, in particular, to determine when they contain elements outside this canonical family. The set $\max(\mathcal M)$ plays an important role.

\section{Reduction to the Two-Monomial Case}
\label{sec:reduction}

We continue with the notation from Section~\ref{sec:preliminaries}. In particular, $S=X\times Y\subseteq \F^2$ is a finite Cartesian product with $|X|=n_x$ and $|Y|=n_y$, and $\mathcal M\subseteq \Delta(I_S)$ is a decreasing set. In this section, we begin studying the sets $Z_S(\mathcal M)$. Our goal is to identify the first genuinely nontrivial support configurations. We show that several classes of decreasing sets lead only to the expected family of polynomials obtained from products of coordinate factors. Combining these reductions with Geil's structural theorem, we eventually reduce the problem to decreasing sets whose maximal monomials are two relatively prime monomials of equal footprint value.

\subsection{Basic properties}
First, $Z_S(\mathcal{M})$ is nonempty when $\mathcal{M}$ is decreasing.

\begin{proposition}\label{prop.notempty}
Let $\mathcal{M}\subseteq\Delta(I_S)$ be decreasing. Then $Z_S(\mathcal{M})\neq\emptyset$. Even more, if $z_S(\mathcal{M}) = z_S(a,b)$ with $x^a y^b \in \mathcal{M}$, then
$$f(x,y) = c\prod_{\alpha\in X'}(x-\alpha)\prod_{\beta\in Y'} (y-\beta) \in Z_S(\mathcal{M})$$
for $c\in\F^*$ and any $X'\subseteq X$ and $Y'\subseteq Y$ such that $|X'|=a$ and $|Y'|=b$.
\end{proposition}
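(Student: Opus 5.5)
We take $f$ as in the statement and check three things: that $f$ lies in $R_S$, that its support is contained in $\mathcal M$, and that it has exactly $z_S(\mathcal M)$ zeros on $S$. Nonemptiness of $Z_S(\mathcal M)$ then follows. Since $\mathcal M$ is finite and nonempty (it contains $1$, being decreasing and nonempty), the maximum defining $z_S(\mathcal M)$ is attained at some $x^ay^b\in\mathcal M$. Because $a<n_x$ and $b<n_y$, subsets $X'\subseteq X$ and $Y'\subseteq Y$ with $|X'|=a$ and $|Y'|=b$ exist, so $f$ can be built.

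For the support, expand the product. Each monomial occurring in $\prod_{\alpha\in X'}(x-\alpha)$ is $x^i$ with $i\le a$, and each monomial in $\prod_{\beta\in Y'}(y-\beta)$ is $y^j$ with $j\le b$. Hence every monomial in $\supp(f)$ is of the form $x^iy^j$ with $i\le a$ and $j\le b$, so it divides $x^ay^b$. Since $\mathcal M$ is decreasing, $\supp(f)\subseteq\mathcal M$. The same degree bounds give $\deg_x f\le a<n_x$ and $\deg_y f\le b<n_y$, so $f\in R_S$.

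To count zeros, note that $c\neq 0$, so $(\gamma,\delta)\in S$ is a zero of $f$ exactly when $\gamma\in X'$ or $\delta\in Y'$. Thus $Z_S(f)=(X'\times Y)\cup(X\times Y')$. By inclusion–exclusion,
\[
|Z_S(f)| = a\,n_y + n_x\,b - ab = z_S(a,b)=z_S(\mathcal M).
\]
Hence $f\in Z_S(\mathcal M)$.

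No step is a real obstacle; the argument is a direct verification. The one point to keep explicit is that the decreasing hypothesis on $\mathcal M$ is exactly what places the full support of $f$, not only its leading monomial $x^ay^b$, inside $\mathcal M$. Note also that we never need the footprint bound, because the zero count is computed exactly.
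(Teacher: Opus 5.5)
Your proof is correct and follows the paper's argument. The decreasing property places every monomial dividing $x^ay^b$ in $\mathcal M$, and inclusion--exclusion on $(X'\times Y)\cup(X\times Y')$ gives exactly $z_S(a,b)=z_S(\mathcal M)$ zeros; you also make explicit two points the paper leaves implicit, namely that $f\in R_S$ (since $a<n_x$ and $b<n_y$) and that the maximum defining $z_S(\mathcal M)$ is attained, which is what yields $Z_S(\mathcal M)\neq\emptyset$.
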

\begin{proof} Since $z_S(\mathcal M)=z_S(a,b)$, we have $x^ay^b\in\mathcal M$. Every monomial in the support of $f$ divides $x^ay^b$, so the decreasing property of $\mathcal M$ implies that
$\supp(f)\subseteq\mathcal M$.
Moreover, the zero set of $f$ is the union $(X'\times Y)\cup(X\times Y')$. Therefore,
\[ |Z_S(f)| = n_y|X'|+n_x|Y'|-|X'||Y'|
= n_ya+n_xb-ab = z_S(a,b) = z_S(\mathcal M).
\]
Hence, $f\in Z_S(\mathcal M)$.
\end{proof}

Motivated by Proposition~\ref{prop.notempty}, we introduce the following family.
\begin{definition}
Let $\mathcal M\subseteq\Delta(I_S)$ be a decreasing set. We define
\[
\Pi_S(\mathcal M)=
\left\{
c\prod_{\alpha\in X'}(x-\alpha)\prod_{\beta\in Y'}(y-\beta)
\in Z_S(\mathcal M):
c\in\F^*,\;X'\subseteq X,\;Y'\subseteq Y
\right\}.
\]
\end{definition}
The set $\Pi_S(\mathcal M)$ provides a distinguished family of elements in $Z_S(\mathcal M)$. The elements of 
$$ Z_S(\mathcal M)\setminus\Pi_S(\mathcal M) $$
are called noncanonical extremal polynomials for $\mathcal M$ on $S$. Our objective is to determine when noncanonical extremal polynomials exist.

\begin{theorem}{\cite[Theorem 7]{geil}}\label{th.onemonomial}
Let $\mathcal M\subseteq\Delta(I_S)$ be decreasing. Take $f\in R_S$ and let $x^ay^b\in in(f)$ be such that $z_S(a,b)>z_S(c,d)$ for any $x^cy^d\in \mathrm{supp}(f)$ with $(a,b)\neq (c,d)$. Then,
\[ f\in\Pi_S(\mathcal M) \qquad\text{or}\qquad |Z_S(f)|<z_S(\mathcal M). \]
\end{theorem}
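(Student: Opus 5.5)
We may assume $|Z_S(f)|=z_S(\mathcal M)$, since otherwise there is nothing to prove. Implicit in the statement is that $\supp(f)\subseteq\mathcal M$, because otherwise $f$ could not lie in $Z_S(\mathcal M)$. In particular $x^ay^b\in\mathcal M$. Since $x^ay^b\in in(f)$, the footprint bound gives
\[
z_S(\mathcal M)=|Z_S(f)|\le z_S(a,b)\le z_S(\mathcal M).
\]
Hence $f$ has maximal zeros, and $z_S(a,b)=|Z_S(f)|$.

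The plan is to strip off coordinate factors and then apply Theorem~\ref{th.maingeil} to what remains. Write
\[
f=h\prod_{\alpha\in X'}(x-\alpha)\prod_{\beta\in Y'}(y-\beta),
\]
where $X'$ and $Y'$ collect every $\alpha\in X$ and $\beta\in Y$ for which $x-\alpha$ or $y-\beta$ divides $f$. Put $a'=|X'|$ and $b'=|Y'|$. Then no coordinate factor from $S'=(X\setminus X')\times(Y\setminus Y')$ divides $h$; a factor from $X'$ or $Y'$ could still appear with multiplicity, but that can be absorbed. By Lemma~\ref{lemma.factors}, $h$ has maximal zeros on $S'$.

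Next I track the exponents. Every monomial of $f$ is $x^{a'}y^{b'}$ times a monomial of $h$ plus lower-order terms, and for any monomial order $in(f)=x^{a'}y^{b'}\,in(h)$. So $x^ay^b=x^{a'+c}y^{b'+d}$ with $x^cy^d\in in(h)$. A direct computation gives the shift identity
\[
z_S(a'+c,b'+d)=z_S(a',b')+z_{S'}(c,d),
\]
using $n_x'=n_x-a'$ and $n_y'=n_y-b'$. The zero count splits in the same way:
\[
|Z_S(f)|=z_S(a',b')+|Z_{S'}(h)|.
\]
Together these give $|Z_{S'}(h)|=z_{S'}(c,d)$.

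Now apply Theorem~\ref{th.maingeil} to $h$ on $S'$. Let $M$ be the set of monomials of $h$ whose $z_{S'}$ value equals $|Z_{S'}(h)|$. The theorem gives $in(h)\subseteq M$ and $\gcd(M)=1$. If $h$ is nonconstant, then $\gcd(M)=1$ forces $M$ to contain at least two distinct monomials. Alternatively, $M=\{1\}$ while $h$ is nonconstant, which contradicts $in(h)\subseteq M$, since any nonconstant polynomial has a nonconstant leading monomial under some order.

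Every monomial of $h$ lifts to the monomial $x^{a'}y^{b'}x^{c''}y^{d''}$ of $f$, because the terms coming from different monomials of $h$ have distinct top parts, and cancellation only affects lower terms. By the shift identity, two distinct monomials of $M$ then yield two distinct monomials of $f$ with the same $z_S$ value $z_S(a,b)$. This contradicts the hypothesis that $x^ay^b$ strictly beats every other monomial in the support. Therefore $h$ is a constant $c\in\F^*$, so $f$ is $c$ times a product of coordinate factors lying in $Z_S(\mathcal M)$, which means $f\in\Pi_S(\mathcal M)$.

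The delicate step is making the lifting rigorous, namely showing that the monomials of $M$ really survive in $\supp(f)$. I would argue with the grading: choose a weighted order for which $in(f)=x^{a'}y^{b'}in(h)$ and compare extreme monomials. A cleaner route, if available, is to avoid lifting altogether and note directly that $in(f)$ contains at least two monomials of maximal footprint value. Since $\gcd(M)=1$, the set $in(h)$ cannot be a single monomial unless that monomial is $1$, and $in(f)=x^{a'}y^{b'}in(h)$ consists of genuine leading monomials of $f$, which lie in $\supp(f)$. Two such monomials with equal $z$ value contradict the strict hypothesis. This version needs only that leading monomials are in the support, so it sidesteps the lifting issue entirely.
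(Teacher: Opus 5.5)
The paper does not prove this statement; it cites it from Geil, so your argument has to stand on its own. As written it has a gap at exactly the step that carries the content.

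\textbf{Route 1 fails.} Your first route relies on every monomial of $h$ reappearing, shifted by $x^{a'}y^{b'}$, in $\supp(f)$. That is false. The coefficient of $x^{a'+u}y^{b'+v}$ in $f$ also receives contributions from every monomial $x^{u'}y^{v'}$ of $h$ with $u'\ge u$, $v'\ge v$. For example, $(x-\alpha)(x+\alpha)=x^2-\alpha^2$: the monomial $1$ of $h=x+\alpha$ does not reappear as $x$.

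\textbf{Route 2 rests on an unproved claim.} Getting to ``$in(h)$ is a single monomial'' is fine, and in fact immediate. Any $m\in in(f)$ other than $x^ay^b$ lies in $\supp(f)$, so $|Z_S(f)|\le z_S(m)<z_S(a,b)$, which forces $in(f)=\{x^ay^b\}$. The decisive assertion, however, is that $\gcd(M)=1$ rules out $in(h)=\{m\}$ with $m\neq 1$. This does not follow from $in(h)\subseteq M$ and $\gcd(M)=1$ alone. A priori $M$ could contain further support monomials of $h$ that make the gcd trivial while $in(h)$ is still $\{m\}$.

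\textbf{The missing argument.} Suppose $in(h)=\{x^cy^d\}$.
\begin{enumerate}
\item The two lexicographic orders ($x>y$ and $y>x$) both have leading monomial $x^cy^d$. Hence every monomial $x^uy^v\in\supp(h)$ satisfies $u\le c$ and $v\le d$.
\item $z_{S'}$ is strictly increasing in each exponent on $\Delta(I_{S'})$:
\[
z_{S'}(c,d)-z_{S'}(u,d)=(c-u)(n_y-b'-d),\qquad z_{S'}(u,d)-z_{S'}(u,v)=(d-v)(n_x-a'-u),
\]
and both factors $n_y-b'-d$ and $n_x-a'-u$ are positive because $h\in R_{S'}$.
\item Therefore $x^cy^d$ is the only monomial of $\supp(h)$ with $z_{S'}$-value $|Z_{S'}(h)|=z_{S'}(c,d)$, so $M=\{x^cy^d\}$.
\item Theorem~\ref{th.maingeil} gives $\gcd(M)=1$, hence $c=d=0$ and $h$ is constant.
\end{enumerate}

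With this step inserted, the rest of your argument goes through: stripping the coordinate factors, applying Lemma~\ref{lemma.factors}, the shift identities for $z_S$ and $|Z_S(f)|$, and concluding $f\in\Pi_S(\mathcal M)$ once $h$ is constant and $f\in Z_S(\mathcal M)$.
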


\subsection{The case of a unique maximal monomial}

We begin with the simplest case: $\max(\mathcal{M})=\{x^ay^b\}$.
\begin{lemma}\label{prop.onegen}
Let $\mathcal M\subseteq\Delta(I_S)$ be decreasing.
Let $f\in R_S$ satisfy $\supp(f)\subseteq\mathcal M$. Assume that
\[
\bigl|\{x^ay^b\in\supp(f):z_S(a,b)=z_S(\mathcal M)\}\bigr|=1.
\]
Then, $f\in Z_S(\mathcal M) \Longleftrightarrow f\in\Pi_S(\mathcal M)$.
\end{lemma}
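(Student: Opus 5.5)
The backward implication is immediate, since $\Pi_S(\mathcal M)\subseteq Z_S(\mathcal M)$ by definition. So the work lies in showing that if $f\in Z_S(\mathcal M)$, then $f\in\Pi_S(\mathcal M)$. The plan is to reduce this to Theorem~\ref{th.onemonomial}. That requires exhibiting a monomial $x^ay^b\in in(f)$ whose footprint value strictly dominates $z_S(c,d)$ for every other $x^cy^d\in\supp(f)$. Once such a monomial is found, Theorem~\ref{th.onemonomial} gives the dichotomy $f\in\Pi_S(\mathcal M)$ or $|Z_S(f)|<z_S(\mathcal M)$, and the second alternative is excluded by $f\in Z_S(\mathcal M)$.

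Let $x^ay^b$ be the unique monomial of $\supp(f)$ with $z_S(a,b)=z_S(\mathcal M)$. I first check that it exists. If $f\in Z_S(\mathcal M)$, the Remark after Theorem~\ref{thm:minRM} shows that
\[
z_S(\mathcal M)=|Z_S(f)|=\min\{z_S(c,d):x^cy^d\in in(f)\}.
\]
Since $in(f)\subseteq\supp(f)\subseteq\mathcal M$, every $x^cy^d\in in(f)$ satisfies $z_S(c,d)\le z_S(\mathcal M)$. Hence every leading monomial of $f$, for every monomial order, attains $z_S(\mathcal M)$.

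By the uniqueness hypothesis, this forces $in(f)=\{x^ay^b\}$, so in particular $x^ay^b\in in(f)$. For any other monomial $x^cy^d\in\supp(f)$ we have $z_S(c,d)\le z_S(\mathcal M)$, because the support lies in $\mathcal M$. Equality is ruled out by uniqueness, so $z_S(c,d)<z_S(a,b)$ strictly. This is exactly the hypothesis of Theorem~\ref{th.onemonomial}, which completes the argument.

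The only delicate point is the first step. We must use the maximal-zeros property, through the footprint bound, to guarantee that the distinguished monomial actually lies in $in(f)$, rather than merely in $\supp(f)$. The rest is bookkeeping with the inequality $z_S(c,d)\le z_S(\mathcal M)$ on $\mathcal M$.
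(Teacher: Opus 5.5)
Your proof is correct and follows the paper's route; the paper's proof consists only of the citation of Theorem~\ref{th.onemonomial}. Your use of the footprint bound (via the Remark after Theorem~\ref{thm:minRM}) to show that every element of $in(f)$ attains $z_S(\mathcal M)$, and hence that $in(f)=\{x^ay^b\}$, verifies that theorem's hypothesis $x^ay^b\in in(f)$, which the paper leaves implicit.
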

\begin{proof}
This follows from Theorem~\ref{th.onemonomial}.
\end{proof}

As an immediate consequence, when $\mathcal M$ has a unique maximal monomial, every extremal polynomial belongs to $\Pi_S(\mathcal M)$.

\begin{proposition}\label{cor.onemon}
     Let $\mathcal{M}\subseteq\Delta(I_S)$ be decreasing with $\max(\mathcal{M})=\{x^ay^b\}$. Then, 
    \[Z_S(\mathcal{M})=\Pi_S(\mathcal{M}).\]
\end{proposition}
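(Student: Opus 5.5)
The plan is to reduce to Lemma~\ref{prop.onegen}. Since $\Pi_S(\mathcal M)\subseteq Z_S(\mathcal M)$ by definition, we only need the reverse inclusion. Let $f\in Z_S(\mathcal M)$. Then $\supp(f)\subseteq\mathcal M$, and it suffices to check the hypothesis of Lemma~\ref{prop.onegen}: exactly one monomial $x^cy^d\in\supp(f)$ satisfies $z_S(c,d)=z_S(\mathcal M)$.

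The first step is to show that $z_S$ is strictly monotone under divisibility on $\Delta(I_S)$. For $0\le c<n_x$ and $0\le d<n_y$ we have
\[
z_S(c+1,d)-z_S(c,d)=n_y-d>0,\qquad z_S(c,d+1)-z_S(c,d)=n_x-c>0 .
\]
Hence, if $x^cy^d$ properly divides $x^ay^b$, with both in $\Delta(I_S)$, then $z_S(c,d)<z_S(a,b)$. Since $\max(\mathcal M)=\{x^ay^b\}$, every monomial of the decreasing set $\mathcal M$ divides $x^ay^b$. It follows that $z_S(\mathcal M)=z_S(a,b)$, and that $x^ay^b$ is the only monomial of $\mathcal M$ attaining this value.

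The second step is to show that $x^ay^b\in\supp(f)$. If it were not, every monomial of $\supp(f)$ would have $z_S$-value strictly less than $z_S(\mathcal M)$. By the footprint bound (see the Remark following Theorem~\ref{thm:minRM}), $|Z_S(f)|\le\min\{z_S(c,d):x^cy^d\in in(f)\}<z_S(\mathcal M)$, which contradicts $f\in Z_S(\mathcal M)$. Here we use that $f\neq 0$, which holds because $f\in R_S$ and a nonzero-valued bound is required; the paper's convention treats extremal polynomials as nonzero. So the set in Lemma~\ref{prop.onegen} is exactly $\{x^ay^b\}$, and the lemma gives $f\in\Pi_S(\mathcal M)$.

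The only delicate point is this second step, namely making sure the distinguished monomial actually appears in the support. The footprint-bound argument above handles it directly. The rest is the elementary monotonicity computation.
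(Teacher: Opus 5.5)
Your proof is correct and follows essentially the paper's route: strict monotonicity of $z_S$ under divisibility, together with the footprint bound, forces $x^ay^b$ to be the unique support monomial attaining $z_S(\mathcal M)$ (the paper phrases this as $in(f)=\{x^ay^b\}$), and then Lemma~\ref{prop.onegen} finishes. Your justification of $f\neq 0$ is garbled but harmless, since the zero polynomial is excluded automatically: $|Z_S(0)|=n_xn_y>n_xn_y-(n_x-a)(n_y-b)=z_S(\mathcal M)$.
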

\begin{proof}
Let $f\in R_S$ with $\supp(f)\subseteq\mathcal{M}$. For every $u=x^cy^d\in in(f)$, we have $u\mid x^ay^b$. If $u\neq x^ay^b$, then $c\leq a$, $d\leq b$, with at least one inequality strict, and hence
\[
z_S(c,d)<z_S(a,b)=z_S(\mathcal M).
\]
Therefore, we would have that $|Z_S(f)|<z_S(\mathcal M)$. Thus, if $f\in Z_S(\mathcal M)$, we must have $in(f)=\{x^ay^b\}.$ The conclusion now follows from Lemma~\ref{prop.onegen}.
\end{proof}

Therefore, when $|\max(\mathcal M)|=1$, the extremal polynomials are characterized by products of coordinate factors.

\subsection{Two maximal monomials with different footprint values}
We next consider the case 
\[
\max(\mathcal M)=
\{x^{a_1}y^{b_1},x^{a_2}y^{b_2}\}
\qquad \text{ and } \qquad
z_S(a_1,b_1) \neq z_S(a_2,b_2).
\]
When the two maximal monomials have different footprint values, the previous argument still determines all extremal polynomials, as the following result shows.

\begin{theorem}\label{thm.diffoot}
Let $\mathcal{M}\subseteq\Delta(I_S)$ be decreasing with $\max(\mathcal M)=\{x^{a_1}y^{b_1},x^{a_2}y^{b_2}\}$ and $z_S(a_1,b_1) \neq z_S(a_2,b_2)$. Then,
    $$Z_S(\mathcal{M})=\Pi_S(\mathcal{M}).$$
\end{theorem}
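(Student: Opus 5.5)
The approach is to reduce to Lemma~\ref{prop.onegen}, exactly as in the proof of Proposition~\ref{cor.onemon}. The inclusion $\Pi_S(\mathcal M)\subseteq Z_S(\mathcal M)$ holds by definition, so only the reverse inclusion needs proof. Without loss of generality assume $z_S(a_1,b_1)>z_S(a_2,b_2)$, so that $z_S(\mathcal M)=z_S(a_1,b_1)$. We must first check that the maximum of $z_S$ over $\mathcal M$ is attained only at maximal monomials. The function $z_S(a,b)=n_xb+n_ya-ab$ is strictly increasing in each variable on $\Delta(I_S)$: its partial differences are $n_y-b>0$ and $n_x-a>0$ since $b<n_y$ and $a<n_x$. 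Every $x^cy^d\in\mathcal M$ divides some element of $\max(\mathcal M)$, so $z_S(c,d)\le z_S(a_i,b_i)$, with equality only if $x^cy^d=x^{a_i}y^{b_i}$.

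Hence the only monomial in $\mathcal M$ with $z_S=z_S(\mathcal M)$ is $x^{a_1}y^{b_1}$, because $z_S(a_2,b_2)<z_S(a_1,b_1)$ strictly.

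Now let $f\in Z_S(\mathcal M)$. By the Remark after Theorem~\ref{thm:minRM}, $f$ has maximal zeros, and
\[
|Z_S(f)|\le\min\{z_S(c,d):x^cy^d\in in(f)\}\le z_S(\mathcal M).
\]
Equality forces every element of $in(f)$ to have footprint value $z_S(\mathcal M)$, so $in(f)=\{x^{a_1}y^{b_1}\}$. In particular $x^{a_1}y^{b_1}\in\supp(f)$, and by the previous paragraph it is the unique monomial of $\supp(f)$ with $z_S$ equal to $z_S(\mathcal M)$. Lemma~\ref{prop.onegen} then gives $f\in\Pi_S(\mathcal M)$.

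I expect no serious obstacle. The one point needing care is the strict monotonicity of $z_S$ under proper divisibility inside $\Delta(I_S)$, which guarantees uniqueness of the maximizing monomial. The hypothesis $z_S(a_1,b_1)\neq z_S(a_2,b_2)$ is used exactly once, to exclude $x^{a_2}y^{b_2}$ from attaining the maximum.
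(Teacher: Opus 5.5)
Your proposal is correct and takes essentially the same approach as the paper. Both isolate the unique monomial of $\mathcal M$ attaining $z_S(\mathcal M)$ and then invoke Geil's result. The paper applies Theorem~\ref{th.onemonomial} directly, while you go through Lemma~\ref{prop.onegen}, which is just a restatement of it. Your version also spells out two points the paper's terse proof leaves implicit: the strict monotonicity of $z_S$ under proper divisibility, and the fact that the dominant monomial actually lies in $in(f)$.
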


\begin{proof}
    Let $f\in R_S$ with $\supp(f)\subseteq\mathcal{M}$. Let $u\in in(f)$. We have that $u|x^{a_1}y^{b_1}$ or $u|x^{a_2}y^{b_2}$. If $u\neq x^{a_2}y^{b_2}$, then 
    $$|Z_S(f)|<z_S(a_2,b_2),$$

    \noindent and thus $f\notin Z_S(\mathcal M)$. If $u=x^{a_2}y^{b_2}$, then by Theorem \ref{th.onemonomial}, $|Z_S(f)|=z_S(a_2,b_2)$ if and only if $f$ has the desired form.
\end{proof}

Hence, whenever there are only two maximal monomials, and they have different footprint values, the set $Z_S(\mathcal M)$ is completely described by $\Pi_S(\mathcal M)$.

\subsection{Two maximal monomials with equal footprint values}
Assume now that
\[
\max(\mathcal M)=
\{x^{a_1}y^{b_1},x^{a_2}y^{b_2}\}
\qquad \text{ and } \qquad
z_S(a_1,b_1) = z_S(a_2,b_2).
\]
In the current situation, any polynomial $f\in Z_S(\mathcal M)$ must satisfy
$in(f)\cap \max(\mathcal M) \neq \emptyset$. Moreover, if only one maximal monomial appears as a possible leading term, then $Z_S(\mathcal M)$ is again completely described by $\Pi_S(\mathcal M)$, as the following result shows.
\begin{proposition}\label{prop.1init}
Let $\mathcal M\subseteq\Delta(I_S)$ be decreasing with
$\max(\mathcal M) = \{x^{a_1}y^{b_1},x^{a_2}y^{b_2}\}$ and $z_S(a_1,b_1)=z_S(a_2,b_2)$. If $f\in Z_S(\mathcal M)$, then
\[ f\in\Pi_S(\mathcal M)
\quad\Longleftrightarrow\quad
|in(f)\cap\max(\mathcal M)|=1.\]
\end{proposition}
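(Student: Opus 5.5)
We prove the two implications separately. Throughout, write $u_i=x^{a_i}y^{b_i}$ and $z:=z_S(a_1,b_1)=z_S(a_2,b_2)=z_S(\mathcal M)$. The last equality holds because every monomial of $\mathcal M$ divides $u_1$ or $u_2$, and $z_S$ is strictly increasing under proper divisibility.

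For ($\Rightarrow$), take $f=c\prod_{\alpha\in X'}(x-\alpha)\prod_{\beta\in Y'}(y-\beta)\in\Pi_S(\mathcal M)$. Expanding the product, the monomial $x^{|X'|}y^{|Y'|}$ appears with coefficient $c$, and every other monomial of $\supp(f)$ properly divides it. So for every monomial order the leading monomial is $x^{|X'|}y^{|Y'|}$, and $in(f)=\{x^{|X'|}y^{|Y'|}\}$.

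Since $f\in Z_S(\mathcal M)$, the footprint bound gives $z=|Z_S(f)|\le z_S(|X'|,|Y'|)$. The monomial $x^{|X'|}y^{|Y'|}$ lies in $\supp(f)\subseteq\mathcal M$, so it divides some $u_i$. By strict monotonicity of $z_S$, equality forces $x^{|X'|}y^{|Y'|}=u_i$. Hence $in(f)\cap\max(\mathcal M)=\{u_i\}$, which has exactly one element.

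For ($\Leftarrow$), suppose $in(f)\cap\max(\mathcal M)=\{u_1\}$, say. The plan is to apply Theorem~\ref{th.onemonomial} (equivalently Lemma~\ref{prop.onegen}) with the leading monomial $u_1$. This requires $z_S(a_1,b_1)>z_S(c,d)$ for every other $x^cy^d\in\supp(f)$. That holds for every monomial other than $u_2$, since each such monomial properly divides $u_1$ or $u_2$. The only possible obstruction is that $u_2\in\supp(f)$. So the key step is to show that $u_2\notin\supp(f)$, and this is the main obstacle.

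To handle it, I would argue by contradiction. Suppose $u_2\in\supp(f)$. First, the two maximal monomials are incomparable, so we may assume $a_1>a_2$ and $b_1<b_2$. Take the lexicographic order with $x>y$ and the one with $y>x$. Any leading monomial of $f$ must be maximal among $\supp(f)$ under divisibility. The maximal elements of $\supp(f)$ are then exactly $u_1$ and $u_2$, because every other support monomial divides one of them.

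Now $u_2$ is a vertex of the Newton polygon of $\supp(f)$: it has the strictly largest $y$-exponent, since $b_2>b_1$ and every other monomial divides $u_1$ or $u_2$. Therefore the lexicographic order with $y>x$ has leading monomial $u_2$, so $u_2\in in(f)$. This contradicts $|in(f)\cap\max(\mathcal M)|=1$.

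Hence $u_2\notin\supp(f)$, the hypothesis of Theorem~\ref{th.onemonomial} holds with $x^ay^b=u_1$, and since $|Z_S(f)|=z_S(\mathcal M)$ that theorem yields $f\in\Pi_S(\mathcal M)$.
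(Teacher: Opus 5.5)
Your proof is correct and follows the paper's route. The paper simply cites Lemma~\ref{prop.onegen} (i.e., Theorem~\ref{th.onemonomial}). You supply what it leaves implicit: the forward direction, and the argument that $u_1,u_2\in\supp(f)$ would put both into $in(f)$ via the two lexicographic orders, so that only one monomial of maximal footprint value can lie in the support.

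One small slip: $u_2$ need not have the \emph{strictly} largest $y$-exponent, since a proper divisor $x^cy^{b_2}$ with $c<a_2$ may also lie in $\supp(f)$. Every such monomial divides $u_2$, however, so the lexicographic order with $y>x$ still selects $u_2$, and your conclusion stands.
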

\begin{proof}
This follows from Lemma~\ref{prop.onegen}.
\end{proof}

The only remaining case is when $in(f)=\max(\mathcal M)$. The next result shows that a common divisor among the maximal monomials forces the existence of coordinate factors.

\begin{proposition}\label{prop.linearfactors}
Assume that $\gcd(x^{a_1}y^{b_1},x^{a_2}y^{b_2})\neq 1$ and take $f\in R_S$ with $\supp(f)\subseteq\mathcal{M}$. If $f$ has no factor of the form $x-\alpha$ or $y-\beta$ for any $\alpha\in X$, $\beta\in Y$, then $f\notin Z_S(\mathcal{M})$.
\end{proposition}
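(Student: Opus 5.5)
We argue by contradiction: suppose $f\in Z_S(\mathcal M)$ has no coordinate factor $x-\alpha$ or $y-\beta$ with $\alpha\in X$, $\beta\in Y$. The plan is to combine Geil's structural result, Theorem~\ref{th.maingeil}, with the observation that every monomial of $\mathcal M$ attaining the value $z_S(\mathcal M)$ is one of the two maximal monomials.

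\emph{Step 1.} Since $f\in Z_S(\mathcal M)$, it has maximal zeros on $S$ (see the Remark preceding the definition of $\max(\mathcal M)$). Moreover $|Z_S(f)|=z_S(\mathcal M)=z_S(a_1,b_1)=z_S(a_2,b_2)$.

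\emph{Step 2.} We identify the set
\[
M=\{x^ay^b\in\supp(f): z_S(a,b)=|Z_S(f)|\}.
\]
Every $x^cy^d\in\supp(f)\subseteq\mathcal M$ divides $x^{a_1}y^{b_1}$ or $x^{a_2}y^{b_2}$. The function $z_S$ is strictly increasing in each coordinate on $\Delta(I_S)$, because
\[
z_S(a+1,b)-z_S(a,b)=n_y-b>0 \quad\text{and}\quad z_S(a,b+1)-z_S(a,b)=n_x-a>0 .
\]
Hence a proper divisor of either maximal monomial has footprint value strictly less than $z_S(\mathcal M)$. Therefore $M\subseteq\{x^{a_1}y^{b_1},x^{a_2}y^{b_2}\}$.

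\emph{Step 3.} Theorem~\ref{th.maingeil} applies, since $f$ has maximal zeros and no coordinate factors. It gives $\gcd(M)=1$ and $in(f)\subseteq M$; in particular $M\neq\emptyset$.
\begin{itemize}
\item If $M$ is a single monomial $x^{a_i}y^{b_i}$, then $\gcd(M)=x^{a_i}y^{b_i}$. This equals $1$ only if $a_i=b_i=0$. But $x^{a_i}y^{b_i}$ is maximal while the other maximal monomial is a nontrivial monomial of $\mathcal M$, so it cannot equal $1$ (and $1$ cannot be maximal in a set with two maximal elements). Hence $\gcd(M)\neq1$.
\item Otherwise $M=\max(\mathcal M)$, and $\gcd(M)=\gcd(x^{a_1}y^{b_1},x^{a_2}y^{b_2})\neq1$ by hypothesis.
\end{itemize}
Either way we contradict Theorem~\ref{th.maingeil}, so $f\notin Z_S(\mathcal M)$.

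The only delicate point is Step 2, namely justifying strict monotonicity of $z_S$ inside $\Delta(I_S)$ so that only the maximal monomials can lie in $M$. This follows from the two differences displayed above, using $a<n_x$ and $b<n_y$. The single-monomial case in Step 3 is trivial bookkeeping.
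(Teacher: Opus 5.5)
Your proof is correct and is the argument the paper intends: the paper's proof is the single line ``it follows from Theorem~\ref{th.maingeil}'', and you supply the omitted details, namely that strict monotonicity of $z_S$ forces $M\subseteq\max(\mathcal M)$ and that $M\neq\emptyset$ because $in(f)\subseteq M$. The case split in Step~3 could be shortened by noting that $\gcd(x^{a_1}y^{b_1},x^{a_2}y^{b_2})$ divides $\gcd(M)$ for any nonempty $M\subseteq\max(\mathcal M)$, but as written it is fine.
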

\begin{proof}
    It follows from Theorem~\ref{th.maingeil}.
\end{proof}

We come to one of the main results in this section. This reduction result shows that after removing all common coordinate factors, every remaining extremal polynomial is supported on a decreasing set whose maximal monomials are relatively prime.
\begin{proposition}\label{prop.factorh}
Assume that $\gcd(x^{a_1}y^{b_1},x^{a_2}y^{b_2})=x^ay^b \neq 1$ and take
$f\in Z_S(\mathcal M)$ with $in(f)=\max(\mathcal M)$. There exist subsets $X'\subseteq X$ and $Y'\subseteq Y$ with
$|X'|=a$ and $|Y'|=b$, and a polynomial $h\in R_{S'}$,
$S'=(X\setminus X')\times(Y\setminus Y')$,
such that
\[
f=h\prod_{\alpha\in X'}(x-\alpha)\prod_{\beta\in Y'}(y-\beta).
\]
Moreover, $h\in Z_{S'}(\mathcal M')$, where
$\mathcal M' = \{u\in\Delta(I_{S'}) : ux^ay^b\in\mathcal M\}$.
Furthermore,
$\max(\mathcal M')=\{x^{d_x},y^{d_y}\}$,
where $d_x=\max\{a_1,a_2\}-a$ and $d_y=\max\{b_1,b_2\}-b$.
\end{proposition}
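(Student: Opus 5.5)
Without loss of generality take $a_1\ge a_2$, so that $b_1\le b_2$; the two monomials are incomparable. Then $a=a_2$, $b=b_1$, $d_x=a_1-a_2$ and $d_y=b_2-b_1$. The plan is to peel off coordinate factors one at a time, using Proposition~\ref{prop.linearfactors} and Lemma~\ref{lemma.factors} inductively on $a+b$.

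\textbf{Step 1: peel off one factor.} Since $\gcd\neq1$, Proposition~\ref{prop.linearfactors} shows that $f$ has a factor $x-\alpha$ with $\alpha\in X$, or $y-\beta$ with $\beta\in Y$. Suppose it is $x-\alpha$, and write $f=(x-\alpha)f_1$.
\begin{itemize}
\item Since $\supp(f)\subseteq\mathcal M$, the support of $f_1$ lies in $\mathcal M_1=\{u: ux\in\mathcal M\}$. To see this, compare extreme $x$-degrees: multiplying by $x-\alpha$ shifts the highest $x$-power terms, and the $x$-leading part of $f_1$ times $x$ appears in $f$. I would rather argue via the Newton polygon, or via initial monomials. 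For every monomial order, $in_<(f)=x\cdot in_<(f_1)$. So $in(f_1)=\{x^{a_1-1}y^{b_1},x^{a_2-1}y^{b_2}\}$, and $f_1$ is supported in $\mathcal M_1$, whose maximal elements are these two monomials.
\item One must check that $f_1\in R_{S_1}$ with $S_1=(X\setminus\{\alpha\})\times Y$. The $x$-degree is fine since $a_1<n_x$.
\item Lemma~\ref{lemma.factors} gives that $f_1$ has maximal zeros on $S_1$.
\item Counting, $|Z_{S_1}(f_1)|=|Z_S(f)|-n_y=z_S(a_1,b_1)-n_y=z_{S_1}(a_1-1,b_1)$, and likewise for the second monomial. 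Hence $f_1\in Z_{S_1}(\mathcal M_1)$ with $in(f_1)=\max(\mathcal M_1)$, and the two footprint values remain equal.
\end{itemize}

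\textbf{Step 2: case distinction.} The main subtlety is in Step 1: an $x$-factor can only be extracted if $a\ge1$. If $a=0$, a factor $x-\alpha$ would force every monomial of $f$ to be divisible by $x$ in the $x$-leading sense, but not in every term. Here I would use the initial monomials. If $(x-\alpha)\mid f$, then $x$ divides every element of $in(f)$, because $in_<(f)=in_<(x-\alpha)\,in_<(f_1)$ and $in_<(x-\alpha)=x$ for every order. Since $in(f)=\max(\mathcal M)$, this forces $x\mid\gcd$, so $a\ge1$ automatically. Symmetrically for $y$. Thus each extracted factor decreases $a$ or $b$ by one while preserving all hypotheses.

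\textbf{Step 3: iterate.} Repeat until the gcd becomes $1$. This happens after exactly $a$ factors in $x$ and $b$ factors in $y$, with distinct roots because the grid shrinks at each step. We obtain $X'$, $Y'$ with $|X'|=a$, $|Y'|=b$, and $h$ supported on the decreasing set with maximal monomials $x^{a_1-a}y^{b_1-b}=x^{d_x}$ and $x^{a_2-a}y^{b_2-b}=y^{d_y}$. This set is exactly $\mathcal M'$, since dividing by $x^ay^b$ is what the iteration accomplishes. The extremality $h\in Z_{S'}(\mathcal M')$ follows from the footprint count, as in Step 1:
\[
z_S(a_i,b_i)-\bigl(a n_y+b n_x-ab\bigr)=z_{S'}(a_i-a,\,b_i-b).
\]

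The main obstacle I expect is the precise identification $\max(\mathcal M')=\{x^{d_x},y^{d_y}\}$. The set $\mathcal M$ itself may be larger than the ideal generated by $\max(\mathcal M)$ would suggest near the gcd, although with only two maximal elements it is exactly the union of their divisor sets. The condition $ux^ay^b\in\mathcal M$ then means that $u$ divides $x^{d_x}$ or $y^{d_y}$. Membership in $\Delta(I_{S'})$ needs $d_x<n_x-a$ and $d_y<n_y-b$, which hold because $a_i<n_x$ and $b_i<n_y$.
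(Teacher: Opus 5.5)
Your proposal is correct and follows essentially the same route as the paper: extract coordinate factors with Proposition~\ref{prop.linearfactors}, pass to the quotient with Lemma~\ref{lemma.factors}, and use the shift of the initial monomials to see that exactly $a$ factors in $x$ and $b$ factors in $y$ come out before the gcd becomes $1$. The paper packages the iteration as a choice of maximal $X',Y'$ followed by a contradiction instead of your explicit induction, and your version additionally checks details the paper leaves implicit: the support containment for the quotient, the footprint count giving extremality rather than just maximal zeros, and why an $x$-factor forces $x\mid\gcd$.
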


\begin{proof}
Take $f\in Z_S(\mathcal M)$. As 
$in(f)=\max(\mathcal M)=\{x^{a_1}y^{b_1},x^{a_2}y^{b_2}\}$,
we have
\[
\gcd(in(f))=x^ay^b\neq 1.
\]

By Proposition~\ref{prop.linearfactors}, a polynomial in \(Z_S(\mathcal M)\) whose set of possible leading monomials has a nontrivial gcd must have a factor of the form \(x-\alpha\) or \(y-\beta\). Repeating this argument, choose maximal subsets $X'\subseteq X$ and $Y'\subseteq Y$ such that
\[
f=h\prod_{\alpha\in X'}(x-\alpha)\prod_{\beta\in Y'}(y-\beta),
\]
with
$S'=(X\setminus X')\times(Y\setminus Y')$. Define $a'=|X'|$ and $b'=|Y'|$. By Lemma~\ref{lemma.factors},
\[
h\in Z_{S'}(\mathcal M''),
\]
where $\mathcal M''=\{u\in\Delta(I_{S'}) : ux^{a'}y^{b'}\in\mathcal M\}$. Since multiplication by $\prod_{\alpha\in X'}(x-\alpha)\prod_{\beta\in Y'}(y-\beta)$ raises the exponents of every leading monomial by \((a',b')\), we obtain
\[
in(h)=\{x^{a_1-a'}y^{b_1-b'},\;x^{a_2-a'}y^{b_2-b'}\}.
\]
Suppose that $\gcd(in(h))\neq 1$. Then the maximal monomials of \(\mathcal M''\) have a nontrivial common factor. Applying Proposition~\ref{prop.linearfactors} to \(h\), either \(h\notin Z_{S'}(\mathcal M'')\), contradicting Lemma~\ref{lemma.factors}, or \(h\) possesses an additional factor of the form \(x-\alpha\) or \(y-\beta\), contradicting the maximality of \(X'\) and \(Y'\). Therefore $\gcd(in(h))=1$.
Since
\[
\gcd(in(h))=x^{a-a'}y^{b-b'},
\]
it follows that $a'=a$ and $b'=b$. Consequently,
\[
\mathcal M''=\{u\in\Delta(I_{S'}) : ux^ay^b\in\mathcal M\}
=\mathcal M',
\]
and its maximal monomials are $x^{a_1-a}y^{b_1-b}$ and $x^{a_2-a}y^{b_2-b}$. We also have that $x^ay^b=\gcd(x^{a_1}y^{b_1},x^{a_2}y^{b_2})$, so one of the exponents \(a_1-a\), \(a_2-a\) is zero and one of the
exponents \(b_1-b\), \(b_2-b\) is zero. Hence
\[
\max(\mathcal M')=\{x^{d_x},y^{d_y}\},
\]
where $d_x=\max\{a_1,a_2\}-a$ and $d_y=\max\{b_1,b_2\}-b$. This proves the result.
\end{proof}

The previous results imply that, to determine whether noncanonical extremal polynomials exist, i.e., if the following set
\[ Z_S(\mathcal M)\setminus\Pi_S(\mathcal M) \]
is nonempty, it suffices to study decreasing sets $\mathcal M'$ satisfying
$\max(\mathcal M') = \{x^{d_x},y^{d_y}\}$.

\section{Extremal Polynomials with Two Maximal Monomials}
\label{sec:structure}

We continue with the notation from previous sections. In particular, $S=X\times Y\subseteq \F^2$ is a finite Cartesian product with $|X|=n_x$ and $|Y|=n_y$, and $\mathcal M\subseteq \Delta(I_S)$ is a decreasing set. Throughout this section, we assume that 
\[
\max(\mathcal{M})=\{x^{d_x},y^{d_y}\}
\]
and investigate the existence and structure of the elements of
$Z_S(\mathcal M)\setminus \Pi_S(\mathcal M)$.

\subsection{An existence criterion}

We begin by characterizing when noncanonical extremal polynomials exist.

\begin{proposition}\label{prop:noPolyexist}
Assume that $\max(\mathcal{M})=\{x^{d_x},y^{d_y}\}$ and let $h$ be an element in $R_S$ such that $\supp(h)\subseteq\mathcal M$. Then,
\[
h\in Z_S(\mathcal M)\setminus\Pi_S(\mathcal M)
\]
if and only if there exist $f\in\F[x]$ and $g\in\F[y]$, and a subset $\Lambda\subseteq\F$, such that
\begin{alignat*}{3}
h(x,y) &= f(x)+g(y), &\qquad \deg(f) &= d_x, &\qquad \deg(g) &= d_y, \\[1ex]
\frac{n_x}{d_x} &= \frac{n_y}{d_y}\in\mathbb{Z}, &\qquad |\Lambda| &= \frac{n_x}{d_x}, & \qquad \text{and} \qquad f(X) &= -g(Y)=\Lambda.
\end{alignat*}
Moreover, for every $\lambda\in \Lambda$,
$\bigl|f^{-1}(\lambda)\cap X\bigr|=d_x$
and 
$\bigl|g^{-1}(-\lambda)\cap Y\bigr|=d_y$.
\end{proposition}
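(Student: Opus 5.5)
We prove the two implications separately. Throughout, $\mathcal M$ is decreasing with $\max(\mathcal M)=\{x^{d_x},y^{d_y}\}$, so $\mathcal M=\{x^i:i\le d_x\}\cup\{y^j:j\le d_y\}$. Every $h$ with $\supp(h)\subseteq\mathcal M$ is therefore automatically of the form $h=f(x)+g(y)$ with $\deg f\le d_x$ and $\deg g\le d_y$. Also $z_S(\mathcal M)=\max\{n_yd_x,\,n_xd_y\}$.

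\emph{($\Rightarrow$).} Let $h\in Z_S(\mathcal M)\setminus\Pi_S(\mathcal M)$. The argument of Theorem~\ref{thm.diffoot} and Proposition~\ref{prop.1init} gives the degree conditions. If $n_yd_x\neq n_xd_y$, Theorem~\ref{thm.diffoot} would give $h\in\Pi_S(\mathcal M)$, so $n_yd_x=n_xd_y$. By Proposition~\ref{prop.1init} we must have $in(h)=\{x^{d_x},y^{d_y}\}$, hence $\deg f=d_x$ and $\deg g=d_y$. Write $z=z_S(\mathcal M)=n_yd_x=n_xd_y$.

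Next we count zeros fibrewise. For $\lambda\in\F$ put $a_\lambda=|f^{-1}(\lambda)\cap X|\le d_x$ and $b_\lambda=|g^{-1}(-\lambda)\cap Y|\le d_y$. Then
\[
|Z_S(h)|=\sum_{\lambda}a_\lambda b_\lambda\le d_y\sum_\lambda a_\lambda=d_yn_x=z .
\]
Equality forces $b_\lambda=d_y$ whenever $a_\lambda>0$. Symmetrically, $a_\lambda=d_x$ whenever $b_\lambda>0$. Set $\Lambda=f(X)$. For $\lambda\in\Lambda$ we get $b_\lambda=d_y>0$, hence $a_\lambda=d_x$. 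So $f(X)=-g(Y)=\Lambda$, every fibre has exactly the sizes $d_x$ and $d_y$, and $n_x=d_x|\Lambda|$, $n_y=d_y|\Lambda|$. This gives the integrality $n_x/d_x=n_y/d_y=|\Lambda|$.

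\emph{($\Leftarrow$).} Suppose the data $f,g,\Lambda$ are given. The zero set of $h=f+g$ is $\bigsqcup_{\lambda\in\Lambda}(f^{-1}(\lambda)\cap X)\times(g^{-1}(-\lambda)\cap Y)$, which has size $|\Lambda|d_xd_y=n_xd_y=z_S(\mathcal M)$. Hence $h\in Z_S(\mathcal M)$. We also need $h\in R_S$, i.e.\ $d_x<n_x$ and $d_y<n_y$, which is part of the hypothesis $\mathcal M\subseteq\Delta(I_S)$.

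It remains to show $h\notin\Pi_S(\mathcal M)$; this is the only point needing care. A canonical element $c\prod_{X'}(x-\alpha)\prod_{Y'}(y-\beta)$ whose support lies in $\mathcal M$ and contains both $x^{d_x}$ and $y^{d_y}$ with $d_x,d_y\ge1$ is impossible. Indeed, if both $X'$ and $Y'$ are nonempty the product contains a mixed monomial $xy$. If one of them is empty, the product is a polynomial in one variable only, but $h$ has both $\deg_x h=d_x\ge1$ and $\deg_y h=d_y\ge1$. In either case $h$ cannot be canonical. This argument implicitly assumes $d_x,d_y\ge1$, which holds because both $x^{d_x}$ and $y^{d_y}$ are maximal and distinct, so the case $d_x=0$ or $d_y=0$ does not arise.

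The main obstacle is the equality analysis in the fibre count, together with the symmetric bound. Beyond that the proof is essentially bookkeeping.
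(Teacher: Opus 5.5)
Your proof is correct and follows essentially the paper's argument: a fibre count forces every fibre of $f$ on $X$ and of $g$ on $Y$ to be full, and the converse is a direct count of the rectangles $(f^{-1}(\lambda)\cap X)\times(g^{-1}(-\lambda)\cap Y)$. Your ordering in ($\Rightarrow$) is more careful than the paper's, which applies Proposition~\ref{prop.1init} before that proposition's equal-footprint hypothesis is known; you first obtain $n_yd_x=n_xd_y$ from Theorem~\ref{thm.diffoot}. Your direct support argument for $h\notin\Pi_S(\mathcal M)$ is also valid, but the mixed monomial forced in $c\prod_{X'}(x-\alpha)\prod_{Y'}(y-\beta)$ is $x^{|X'|}y^{|Y'|}$, not necessarily $xy$ (e.g.\ $(x^2-1)(y-\beta)$ has no $xy$ term).
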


\begin{proof}
Suppose that $h\in Z_S(\mathcal M)\setminus\Pi_S(\mathcal M)$. By Proposition~\ref{prop.1init}, $in(h)=\{x^{d_x},y^{d_y}\}$. Since
$\max(\mathcal M)=\{x^{d_x},y^{d_y}\}$ and $\mathcal M$ is decreasing, every monomial in $\mathcal M$ is univariate. Therefore, there exist polynomials $f\in\F[x]$ and $g\in\F[y]$ such that
\[
h(x,y)=f(x)+g(y).
\]
The equality $in(h)=\{x^{d_x},y^{d_y}\}$ implies that
\[
\deg(f)=d_x
\qquad\text{and}\qquad
\deg(g)=d_y.
\]
Since $h\in Z_S(\mathcal M)$,
$|Z_S(h)|=z_S(\mathcal M)=\max\{n_yd_x,n_xd_y\}$.
Applying the footprint bound to the two possible leading monomials of
$h$, we obtain
$|Z_S(h)|\leq z_S(d_x,0)=n_yd_x$ and $|Z_S(h)|\leq z_S(0,d_y)=n_xd_y$.
It follows that $n_yd_x=n_xd_y$ and hence
\[
\frac{n_x}{d_x}=\frac{n_y}{d_y}.
\]
Furthermore, $|Z_S(h)|=n_xd_y=n_yd_x$. For each $u\in X$, the polynomial
$h(u,y)=f(u)+g(y)$ has degree $d_y$. Therefore,
\[
\bigl|\{v\in Y:h(u,v)=0\}\bigr|=\bigl|\{v\in Y:g(v)=-f(u)\}\bigr|\leq d_y.
\]
Summing over all $u\in X$ gives
$|Z_S(h)|=\sum_{u\in X}\bigl|\{v\in Y:h(u,v)=0\}\bigr| \leq n_xd_y$.
Since equality holds, every summand must equal $d_y$. Thus, for every
$u\in X$,
\[
\bigl|\{v\in Y:g(v)=-f(u)\}\bigr|=d_y.
\]
In particular, $-f(X)\subseteq g(Y)$. Now set $\Lambda=f(X)$.
For every $\lambda\in \Lambda$, define
\[
X_\lambda=\{u\in X:f(u)=\lambda\}
\quad \text{ and } \quad
Y_\lambda=\{v\in Y:g(v)=-\lambda\}.
\]
Since $\deg(f)=d_x$, $|X_\lambda|\leq d_x$, and the sets $X_\lambda$, for $\lambda\in \Lambda$, partition $X$. Consequently,
$n_x=\sum_{\lambda\in \Lambda}|X_\lambda|\leq |\Lambda|d_x$ and $|\Lambda|\geq\frac{n_x}{d_x}$.

Similarly, the sets $Y_\lambda$, for $\lambda\in \Lambda$, are pairwise disjoint, and each has cardinality $d_y$. Hence
$n_y\geq\sum_{\lambda\in \Lambda}|Y_\lambda|=|\Lambda|d_y$,
which gives $|\Lambda|\leq\frac{n_y}{d_y}=\frac{n_x}{d_x}$.
Therefore,
\[
|\Lambda|=\frac{n_x}{d_x}=\frac{n_y}{d_y} \in \Z.
\]

Since
$n_x=\sum_{\lambda\in \Lambda}|X_\lambda|=|\Lambda|d_x$,
and every fiber $X_\lambda$ has cardinality at most $d_x$, we must have
\[
\bigl|f^{-1}(\lambda)\cap X\bigr|=|X_\lambda|=d_x
\]
for every $\lambda\in \Lambda$. Similarly,
\[
\bigl|g^{-1}(-\lambda)\cap Y\bigr|=|Y_\lambda|=d_y
\]
for every $\lambda\in \Lambda$. Finally, the sets $Y_\lambda$, for $\lambda\in \Lambda$, are pairwise disjoint and their total cardinality is
$|\Lambda|d_y=n_y$. They therefore partition $Y$, and hence
$-g(Y)=\Lambda$.
Thus
\[
f(X)=-g(Y)=\Lambda,
\]
as required.

Conversely, suppose that
\[
h(x,y)=f(x)+g(y),
\qquad
\deg(f)=d_x,
\qquad
\deg(g)=d_y,
\]
and that there exists $\Lambda\subseteq\F$ satisfying the stated
conditions. For each $\lambda\in \Lambda$, let
\[
X_\lambda
=
f^{-1}(\lambda)\cap X
\qquad\text{and}\qquad
Y_\lambda
=
g^{-1}(-\lambda)\cap Y.
\]
Then
$|X_\lambda|=d_x$ and $|Y_\lambda|=d_y$.
Moreover, the zero set of $h$ is the disjoint union
\[
Z_S(h)=
\bigsqcup_{\lambda\in \Lambda}
(X_\lambda\times Y_\lambda).
\]
Therefore,
$|Z_S(h)|=\sum_{\lambda\in \Lambda}|X_\lambda||Y_\lambda|=
|\Lambda|d_xd_y=\frac{n_x}{d_x}d_xd_y=n_xd_y.$
Since $\frac{n_x}{d_x}=\frac{n_y}{d_y}$,
we also have $n_xd_y=n_yd_x=z_S(\mathcal M)$.
Thus, $h\in Z_S(\mathcal M)$. Finally, because
$\deg(f)=d_x$ and $\deg(g)=d_y$, then
$in(h)=\{x^{d_x},y^{d_y}\}$. 
It follows from Proposition~\ref{prop.1init} that
$h\notin\Pi_S(\mathcal M)$.
Therefore,
\[
h\in Z_S(\mathcal M)\setminus\Pi_S(\mathcal M).
\]
This completes the proof.
\end{proof}

The previous proposition shows that the existence of a noncanonical extremal polynomial imposes strong arithmetic restrictions on the grid parameters. In particular, the quotient
\[
\frac{n_x}{d_x}
=
\frac{n_y}{d_y}
\]
must be an integer. Moreover, every noncanonical extremal polynomial induces a partition of the sets $X$ and $Y$ according to $f^{-1}(\lambda)\cap X$ and $g^{-1}(\lambda)\cap Y$.

The next result describes the structure of a polynomial that is constant on several subsets of cardinality equal to its degree. This condition guarantees the existence of noncanonical extremal polynomials.
\begin{lemma}\label{lem:form_h_with_m}
Let $f\in\F[x]$ be a polynomial of degree $k$. Assume that $X_1,\ldots,X_t\subseteq\F$ are $t$ pairwise distinct subsets of cardinality $k$ such that $f(X_i)=\lambda_i$ for $1\leq i \leq t$. The following statements hold: 1. $f^{-1}(\lambda_i)=X_i$ for $1 \leq i \leq t$. 2. For $1 \leq i < j \leq t$, $\lambda_i \neq \lambda_j$ and $X_i \cap X_j = \emptyset$. 3. There exists $\gamma\in\F^*$ such that, for $1\leq i \leq t$,
\[
f(x)=\lambda_i+\gamma\prod_{\alpha\in X_i}(x-\alpha).
\]
Moreover, for $i\neq j$, 
$\gamma=(-1)^k \left(\lambda_j-\lambda_i\right)
\left(\prod_{\alpha\in X_i}\alpha-\prod_{\alpha\in X_j}\alpha\right)^{-1}$, and for $i\neq j$ and $1 \leq  r < k$, we have
\[
\sum_{\substack{ \{\alpha_1,\ldots,\alpha_r\}\subseteq X_i }} \alpha_1\cdots\alpha_r = \sum_{\substack{ \{\alpha_1,\ldots,\alpha_r\}\subseteq X_j }} \alpha_1\cdots\alpha_r.
\]
\end{lemma}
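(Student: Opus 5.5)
The plan rests on one observation: a polynomial of degree $k$ that takes a constant value on a set of exactly $k$ points is determined by that set, up to a scalar. For each $i$, the polynomial $f(x)-\lambda_i$ has degree $k$ and vanishes on the $k$ distinct points of $X_i$. It is therefore divisible by $\prod_{\alpha\in X_i}(x-\alpha)$, and comparing degrees gives
\[
f(x)-\lambda_i=\gamma_i\prod_{\alpha\in X_i}(x-\alpha),
\]
where $\gamma_i\in\F^*$ is forced to equal the leading coefficient of $f$. Hence $\gamma_i=\gamma$ does not depend on $i$, and this proves statement 3.

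Statements 1 and 2 follow from this factorization. A nonzero polynomial of degree $k$ has at most $k$ roots in $\F$, so the roots of $f-\lambda_i$ are exactly the elements of $X_i$. This gives $f^{-1}(\lambda_i)=X_i$. Now suppose $\lambda_i=\lambda_j$ with $i\neq j$. Then $X_i=f^{-1}(\lambda_i)=f^{-1}(\lambda_j)=X_j$, which contradicts the assumption that the sets are pairwise distinct, so $\lambda_i\neq\lambda_j$. Since $\lambda_i\neq\lambda_j$, the fibres $f^{-1}(\lambda_i)$ and $f^{-1}(\lambda_j)$ are disjoint, and therefore $X_i\cap X_j=\emptyset$.

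For the remaining identities, I will equate the two expressions for $f$ obtained for indices $i\neq j$:
\[
\lambda_i+\gamma\prod_{\alpha\in X_i}(x-\alpha)=\lambda_j+\gamma\prod_{\alpha\in X_j}(x-\alpha).
\]
Expand each product as $\sum_{r=0}^{k}(-1)^r e_r(X_i)x^{k-r}$, where $e_r$ denotes the $r$-th elementary symmetric function of the set. For $1\le r<k$, comparing the coefficients of $x^{k-r}$ and cancelling $\gamma(-1)^r$, which is nonzero, gives $e_r(X_i)=e_r(X_j)$. Comparing constant terms gives
\[
\lambda_i+\gamma(-1)^k e_k(X_i)=\lambda_j+\gamma(-1)^k e_k(X_j),
\]
so $\gamma(-1)^k\bigl(e_k(X_i)-e_k(X_j)\bigr)=\lambda_j-\lambda_i$. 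By statement 2 the right-hand side is nonzero, so $e_k(X_i)-e_k(X_j)$ is nonzero and can be inverted. Solving for $\gamma$ and using $(-1)^{-k}=(-1)^k$ gives the stated formula.

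None of these steps is a genuine obstacle. The only points that need care are that the distinctness hypothesis is what makes $\lambda_i\neq\lambda_j$, and that this is exactly what justifies inverting $\prod_{X_i}\alpha-\prod_{X_j}\alpha$ in the formula for $\gamma$.
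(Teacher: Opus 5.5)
Your proposal is correct and follows essentially the same route as the paper. You factor $f-\lambda_i=\gamma_i\prod_{\alpha\in X_i}(x-\alpha)$, use the root count for statement 1, get $\lambda_i\neq\lambda_j$ and disjointness from the distinctness of the $X_i$, and compare coefficients to obtain the symmetric-function identities and the formula for $\gamma$. The differences are minor: you read off $\gamma_i$ directly as the leading coefficient of $f$, whereas the paper compares $x^k$-coefficients of $\gamma_iP_i-\gamma_jP_j$. You also explicitly justify the invertibility of $\prod_{\alpha\in X_i}\alpha-\prod_{\alpha\in X_j}\alpha$ via $\lambda_i\neq\lambda_j$, which the paper leaves implicit.
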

\begin{proof}
The polynomial $f(x)-\lambda_i$ has degree $k$ and $k$ distinct roots belonging to $X_i$. It therefore has no other roots in $\F$. Hence $\{ \alpha \in \F : f(\alpha)=\lambda_i\} = X_i$. For $1 \leq i \leq t$, define
$P_i(x)=\prod_{\alpha\in X_i}(x-\alpha)$. As $f(X_i)=\lambda_i$, every element of $X_i$ is a root of $f(x)-\lambda_i$. Since both $f(x)-\lambda_i$ and $P_i(x)$ have degree $k$, there exists $\gamma_i\in\F^*$ such that
\[
f(x)-\lambda_i=\gamma_iP_i(x).
\]
For $i\neq j$, the previous equation implies that
$\gamma_iP_i(x)-\gamma_jP_j(x)=\lambda_j-\lambda_i$. Since $P_i$ and $P_j$ are monic of degree $k$, comparison of the coefficients of $x^k$ gives $\gamma_i=\gamma_j$. Thus, there exists $\gamma\in\F^*$ such that $\gamma_i=\gamma$
for every $i$. Therefore,
\begin{equation}\label{26.08.17}
f(x)=\lambda_i+\gamma P_i(x)=\lambda_i+\gamma\prod_{\alpha\in X_i}(x-\alpha)    
\end{equation}
for every $i$. If $\lambda_i=\lambda_j$ for some $i\neq j$, Equation~\eqref{26.08.17} and $\gamma\neq0$ imply $P_i(x)=P_j(x)$. Since these polynomials are monic and split into distinct linear factors, we have $X_i=X_j$. Therefore, the elements $\lambda_1,\ldots,\lambda_t$ are pairwise distinct. Now suppose that $X_i\cap X_j\neq\emptyset$ for some $i\neq j$, and pick $\alpha\in X_i\cap X_j$. Then $\lambda_i=f(\alpha)=\lambda_j$, which is a contradiction. Hence the sets $X_1,\ldots, X_t$ are pairwise disjoint.

Finally, write 
\[
P_i(x)=x^k-e_1(X_i)x^{k-1}+e_2(X_i)x^{k-2}-\cdots+(-1)^ke_k(X_i),
\]
where $e_r(X_i)=
\sum_{\substack{ \{\alpha_1,\ldots,\alpha_r\}\subseteq X_i }}
\alpha_1\cdots\alpha_r$. For $i\neq j$, we have
$\gamma\bigl(P_i(x)-P_j(x)\bigr)=\lambda_j-\lambda_i$. Thus, comparing the coefficients of $x^{i}$, we obtain $e_r(X_i)=e_r(X_j)$ for $1\leq r\leq k-1$. The constant terms give
$\gamma(-1)^k
\left(\prod_{\alpha\in X_i}\alpha-\prod_{\alpha\in X_j}\alpha\right)
=\lambda_j-\lambda_i$.
\end{proof}

The previous lemma implies that the sets $f^{-1}(\lambda)\cap X$ and $g^{-1}(\lambda)\cap Y$ associated with distinct $\lambda$'s are pairwise disjoint and satisfy strong algebraic relations. These relations play a central role in classifying extremal polynomials.

We now describe all noncanonical extremal polynomials.
\begin{proposition}\label{prop:form_h_with_m}
Assume that $|X|=td_x$ and $|Y|=td_y$. Let $\mathcal M\subseteq\Delta(I_S)$ be a decreasing set such that $\max(\mathcal M)=\{x^{d_x},y^{d_y}\}$. For every
$h \in Z_S(\mathcal M)\setminus\Pi_S(\mathcal M)$
there exist partitions 
$X=X_1\sqcup\cdots\sqcup X_t$ and $Y=Y_1\sqcup\cdots\sqcup Y_t$ with $|X_i|=d_x$ and $|Y_i|=d_y$ such that
\[
h(x,y)=\rho\bigl(P_i(x)+\mu Q_i(y)\bigr),
\]
where $P_i(x)=\prod_{\alpha\in X_i}(x-\alpha)$ and $Q_i(y)=\prod_{\beta\in Y_i}(y-\beta)$. If $i\neq j$, then
$\mu=(-1)^{d_y-d_x+1}
\left(\prod_{\alpha\in X_i}\alpha-\prod_{\alpha\in X_j}\alpha\right)
\left(\prod_{\beta\in Y_i}\beta-\prod_{\beta\in Y_j}\beta\right)^{-1}$.
\end{proposition}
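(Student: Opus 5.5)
The plan is to combine the existence criterion of Proposition~\ref{prop:noPolyexist} with the structure result of Lemma~\ref{lem:form_h_with_m}, applied once to the $x$-part and once to the $y$-part of $h$.

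\emph{Step 1: split $h$ and index the fibres.} Take $h\in Z_S(\mathcal M)\setminus\Pi_S(\mathcal M)$. By Proposition~\ref{prop:noPolyexist} we may write $h=f(x)+g(y)$ with $\deg f=d_x$ and $\deg g=d_y$. The same proposition gives a set $\Lambda$ with $|\Lambda|=n_x/d_x=t$ and $f(X)=-g(Y)=\Lambda$, and every fibre $f^{-1}(\lambda)\cap X$ has exactly $d_x$ elements while every fibre $g^{-1}(-\lambda)\cap Y$ has exactly $d_y$ elements. Enumerate $\Lambda=\{\lambda_1,\dots,\lambda_t\}$ and set
\[
X_i=f^{-1}(\lambda_i)\cap X,\qquad Y_i=g^{-1}(-\lambda_i)\cap Y .
\]
Fibres over distinct values are disjoint, and by cardinality they exhaust $X$ and $Y$. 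This gives the partitions $X=X_1\sqcup\cdots\sqcup X_t$ and $Y=Y_1\sqcup\cdots\sqcup Y_t$, with $|X_i|=d_x$ and $|Y_i|=d_y$.

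\emph{Step 2: apply Lemma~\ref{lem:form_h_with_m} twice.} Applied to $f$ with the sets $X_i$ and values $\lambda_i$, the lemma gives $\gamma\in\F^*$ with $f=\lambda_i+\gamma P_i(x)$ for every $i$. Applied to $g$ with the sets $Y_i$ and values $-\lambda_i$, it gives $\delta\in\F^*$ with $g=-\lambda_i+\delta Q_i(y)$ for every $i$. Adding the two expressions for the same index $i$, the constants cancel and
\[
h=\gamma P_i(x)+\delta Q_i(y).
\]
Set $\rho=\gamma$ and $\mu=\delta/\gamma$. Then $h=\rho\bigl(P_i+\mu Q_i\bigr)$, and this holds for every $i$ simultaneously.

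\emph{Step 3: compute $\mu$.} Fix $i\ne j$, which requires $t\ge 2$; if $t=1$ there is nothing to prove. The lemma's explicit formulas read
\[
\gamma=(-1)^{d_x}(\lambda_j-\lambda_i)\Bigl(\textstyle\prod_{X_i}\alpha-\prod_{X_j}\alpha\Bigr)^{-1},
\qquad
\delta=(-1)^{d_y}(\lambda_i-\lambda_j)\Bigl(\textstyle\prod_{Y_i}\beta-\prod_{Y_j}\beta\Bigr)^{-1}.
\]
In the ratio $\delta/\gamma$ the factor $\lambda_i-\lambda_j$ cancels against $\lambda_j-\lambda_i$, leaving a sign $-1$. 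Together with $(-1)^{d_y-d_x}$ this produces exactly the stated expression for $\mu$.

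The main obstacle is justifying that the inverses in these formulas exist. I would use the constant-term identity from the proof of the lemma:
\[
\gamma(-1)^{d_x}\Bigl(\textstyle\prod_{X_i}\alpha-\prod_{X_j}\alpha\Bigr)=\lambda_j-\lambda_i .
\]
The right-hand side is nonzero because the $\lambda$'s are distinct, so the product difference for the $X$'s is nonzero. The same argument with $\delta$ shows the product difference for the $Y$'s is nonzero.
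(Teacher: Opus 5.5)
Your proposal is correct and follows the paper's proof essentially step for step. Both arguments use Proposition~\ref{prop:noPolyexist} to obtain the fibre partitions, apply Lemma~\ref{lem:form_h_with_m} once to $f$ and once to $g$, and take the ratio of the two explicit constants. Your extra observation is a small improvement: the constant-term identity, together with the distinctness of the $\lambda_i$, shows that the product differences are nonzero, so the inverses in the formula for $\mu$ exist; the paper leaves this implicit.
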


\begin{proof}
Let $h\in Z_S(\mathcal M)\setminus\Pi_S(\mathcal M)$. By Proposition~\ref{prop:noPolyexist}, there exist $f\in\F[x]$ and $g\in\F[y]$ such that $h(x,y)=f(x)+g(y)$, $\deg(f)=d_x$, and $\deg(g)=d_y$. Moreover, there exists a set $\Lambda=\{\lambda_1,\ldots,\lambda_t\}\subseteq\F$ of cardinality $t$ such that $f(X)=-g(Y)=\Lambda$ and, for every $i\in\{1,\ldots,t\}$, we have $\bigl|f^{-1}(\lambda_i)\cap X\bigr|=d_x$ and $\bigl|g^{-1}(-\lambda_i)\cap Y\bigr|=d_y$.

For each $1 \leq i \leq t$, define $X_i=f^{-1}(\lambda_i)\cap X$ and $Y_i=g^{-1}(-\lambda_i)\cap Y$. The sets $X_i$ and $Y_i$ corresponding to distinct values are pairwise disjoint. Since $|X_i|=d_x$, $|Y_i|=d_y$, $|X|=td_x$, and $|Y|=td_y$, we obtain the partitions $X=X_1\sqcup\cdots\sqcup X_t$ and $Y=Y_1\sqcup\cdots\sqcup Y_t$.

For each $i$, define $P_i(x)=\prod_{\alpha\in X_i}(x-\alpha)$ and $Q_i(y)=\prod_{\beta\in Y_i}(y-\beta)$. Applying Lemma~\ref{lem:form_h_with_m} to $f$ and the sets $X_1,\ldots,X_t$, there exists $\rho\in\F^*$, independent of $i$, such that $f(x)=\lambda_i+\rho P_i(x)$ for every $i$.

Similarly, applying Lemma~\ref{lem:form_h_with_m} to $g$ and the sets $Y_1,\ldots,Y_t$, there exists $\sigma\in\F^*$, independent of $i$, such that $g(y)=-\lambda_i+\sigma Q_i(y)$ for every $i$. Therefore, $$h(x,y)=\rho P_i(x)+\sigma Q_i(y)=\rho\bigl(P_i(x)+\mu Q_i(y)\bigr),$$ where $\mu=\sigma/\rho\in\F^*$. This representation holds for every $1 \leq i \leq t$. For $i\neq j$, Lemma~\ref{lem:form_h_with_m} gives
$\rho=(-1)^{d_x}
\frac{\lambda_j-\lambda_i}
{\prod_{\alpha\in X_i}\alpha-\prod_{\alpha\in X_j}\alpha}
$
and
$\sigma=(-1)^{d_y}
\frac{\lambda_i-\lambda_j}
{\prod_{\beta\in Y_i}\beta-\prod_{\beta\in Y_j}\beta}
$.
Since $\mu=\sigma/\rho$, we obtain the desired expression for $\mu$.
\end{proof}

The previous proposition shows that every polynomial in
\[
Z_S(\mathcal M)\setminus\Pi_S(\mathcal M)
\]
arises from a decomposition $X=X_1\sqcup\cdots\sqcup X_t$ and
$Y=Y_1\sqcup\cdots\sqcup Y_t$ with $|X_i|=d_x$ and $|Y_i|=d_y$.
\begin{remark}
Conversely, it is not difficult to check that every such decomposition satisfying the conditions of Proposition~\ref{prop:form_h_with_m} produces an extremal polynomial.    
\end{remark}
Thus, the classification problem for extremal polynomials reduces to studying suitable families of subsets of $X$ and $Y$.

\subsection{Examples}

We illustrate the previous construction with explicit examples.

\begin{example}\label{ex:cubic}
    Consider the field $\F_{13}$ and take $d_x = d_y =3$ and $t=4$.  
Define
$$
f(x)=x^3, \quad g(y)=-y^3, \quad \text{ and } \quad h(x,y)=x^3-y^3.
$$
The element $2$ is a primitive element of $\F_{13}^*$. Choose
$$
(\lambda_1,\lambda_2,\lambda_3,\lambda_4) = (2^{0\cdot 3},\,2^{1\cdot 3},\,2^{2\cdot 3},\,2^{3\cdot 3}) = (1,8,12,5) \in \F_{13}^4.
$$
For each $i=1,2,3,4$, the equation
$x^3 = a_i$ has exactly three solutions in $\F_{13}$.
Thus, we have:
$$
\begin{aligned}
X_1 = Y_1 &= \{1,3,9\},   \quad &(\lambda_1=1),\\
X_2 = Y_2 &= \{2,6,5\},   \quad &(\lambda_2=8),\\
X_3 = Y_3 &= \{4,12,10\}, \quad &(\lambda_3=12),\\
X_4 = Y_4 &= \{8,11,7\},  \quad &(\lambda_4=5).
\end{aligned}
$$
Set
$X = X=\bigsqcup_{i=1}^4X_i$, $Y = \bigsqcup_{i=1}^4 Y_i.$ So, $|X|=|Y|=12$. In fact,
$X = Y = \F_{13}^*.$
For $x \in X_i$ and $y \in Y_i$, we have $x^3 = \lambda_i$ and $y^3 = \lambda_i$, hence
$
h(x,y)=x^3-y^3 = \lambda_i-\lambda_i = 0,
$
while for $i \neq j$,
$
h(x,y)=\lambda_i-\lambda_j \neq 0.
$
Therefore,
$
Z_{X\times Y}(h) = \bigcup_{i=1}^4 (X_i \times Y_i).
$
\end{example}
\begin{remark}\label{26.08.18}
Let $\ell(y) = \lambda y + \beta$ with $\lambda\neq0, \beta \in \F$ an affine transformation. If a polynomial $h(x,y)$ satisfies Proposition~\ref{prop:form_h_with_m} on the sets $X_1,\ldots,X_t,Y_1,\ldots,Y_t$, then $h(x,\ell(y))$ also satisfies Proposition~\ref{prop:form_h_with_m} on the sets $X_1,\ldots,X_t,Y_1',\ldots,Y_t'$ with $Y_i' = \ell^{-1}{(Y_i)}$.    
\end{remark}

The following example comes from Example~\ref{ex:cubic} and Remark~\ref{26.08.18} with $\ell(y) = y-1$.

\begin{example}
Consider the field $\F_{13}$ and take $d_x = d_y = 3$ and $t = 4$. 
Define
$$
f(x)=x^3, \quad g(y)=-(y-1)^3, \quad \text{ and } \quad h(x,y)=x^3-(y-1)^3.
$$
Thus, we define:
$$
\begin{aligned}
X_1 &= \{1,3,9\},   & Y_1 &= \{2,4,10\} \quad &(a_1=1),\\
X_2 &= \{2,6,5\},   & Y_2 &= \{3,7,6\}  \quad &(a_2=8),\\
X_3 &= \{4,12,10\}, & Y_3 &= \{5,0,11\} \quad &(a_3=12),\\
X_4 &= \{8,11,7\},  & Y_4 &= \{9,12,8\} \quad &(a_4=5).
\end{aligned}
$$
Set
$X = X=\bigsqcup_{i=1}^4X_i$, $Y = \bigsqcup_{i=1}^4 Y_i$. So, $|X|=|Y|=12$ and we have that
$
Z_{X\times Y}(h) = \cup_{i=1}^4 (X_i \times Y_i).
$
\end{example}

These examples show how the zero set decomposes as a union of rectangles
\[
Z_{X\times Y}(h) = \bigcup_{i=1}^t (X_i\times Y_i),
\]
a phenomenon that is characteristic of the extremal polynomials described by Proposition~\ref{prop:form_h_with_m}.

\section{Prouhet--Tarry--Escott Configurations}
\label{sec:PTE}
The relations appearing in Lemma~\ref{lem:form_h_with_m} admit an interpretation in terms of the Prouhet--Tarry--Escott problem. Indeed, the lemma shows that, for every \(i\neq j\),
\[ \deg\left(
\prod_{\alpha\in X_i}(x-\alpha)- \prod_{\alpha\in X_j}(x-\alpha) \right)=0. \]
Equivalently, the elementary symmetric functions of degrees \(1,\ldots,d-1\) agree on \(X_i\) and \(X_j\).

If the characteristic of the field is zero or is sufficiently large relative to the degree, this condition is equivalent to
\[ \sum_{\alpha\in X_i}\alpha^r
= \sum_{\alpha\in X_j}\alpha^r, \qquad 1\leq r\leq d-1 \qquad \text{ and } \qquad 1\leq i < j \leq t.\]
These are the equations appearing in the Prouhet--Tarry--Escott problem. The classical Prouhet--Tarry--Escott problem asks for two disjoint multisets of integers \(A\) and \(B\) satisfying
\[ \sum_{a\in A}a^k = \sum_{b\in B}b^k, \qquad 1\leq k\leq d-1. \]
Such a solution is called ideal if $|A|=|B|=d$.

This is a long-standing problem, and no systematic methods exist for constructing ideal solutions; see \cite{PTE-survey} for a survey of solutions over the integers. 

There are also constructions over the Gaussian integers \cite{PTE-gaussian}. Interestingly, the problem has also appeared in coding theory, in connection with the complexity of Reed--Solomon decoding~\cite{GandikotaGhaziGrigorescu16}. A relaxation of the problem, in which \(A\) and \(B\) are not required to be disjoint, has been studied over \(\mathbb Z_p\) in \cite{PTE-zp}, but a general study over finite fields remains to be developed. Our problem is more restrictive than the usual Prouhet--Tarry--Escott problem: we need a family of pairwise disjoint sets -- not multisets -- satisfying the corresponding relations simultaneously.

\begin{definition}
Let \(\mathcal M\subseteq\Delta(I_S)= \{x^ay^b:0\leq a<n_x,\ 0\leq b<n_y\}\) be a decreasing monomial set. We define
$$ \mathcal T(\mathcal M)
= \left\{ (U, V, h): 
\begin{array}{l} U,V\subseteq\F,\ |U|=n_x,\ |V|=n_y,\\
h\in Z_{U\times V}(\mathcal M) \setminus \Pi_{U\times V}(\mathcal M) 
\end{array} \right\}.$$
\end{definition}

A particularly important case occurs when one of the two maximal monomials has degree one.

\begin{proposition}\label{prop.d1}
Assume there is $t\in\Z_{>0}$ such that $n_x=n_yt$ and
$\max(\mathcal M)=\{x^t,y\}$. Then, $\mathcal T({\mathcal M})\neq\emptyset$ if and only if there exist subsets $X_1,\ldots,X_{n_y}\subseteq\F$, pairwise disjoint each of cardinality $t$, such that
$$
\deg\left(
\prod_{\alpha\in X_i}(x-\alpha)-\prod_{\alpha\in X_j}(x-\alpha)
\right) = 0
$$
for every $1\leq i < j \leq n_y$.
\end{proposition}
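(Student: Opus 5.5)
We prove both implications by specializing Proposition~\ref{prop:noPolyexist}, Lemma~\ref{lem:form_h_with_m} and Proposition~\ref{prop:form_h_with_m} to $d_x=t$, $d_y=1$. The number of blocks is then $n_x/d_x=n_y/d_y=n_y$. Throughout, $\mathcal M$ is decreasing with $\max(\mathcal M)=\{x^t,y\}$, so every element of $\mathcal M$ is one of $1,x,\ldots,x^t,y$.

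For the forward implication, take $(U,V,h)\in\mathcal T(\mathcal M)$, so that $|U|=n_x=tn_y$ and $|V|=n_y$. Proposition~\ref{prop:form_h_with_m}, applied on $S=U\times V$, gives a partition $U=X_1\sqcup\cdots\sqcup X_{n_y}$ into blocks of size $t$ and a partition of $V$ into singletons $Y_i=\{\beta_i\}$. It also gives $\rho,\mu\neq 0$ with $h=\rho(P_i(x)+\mu(y-\beta_i))$ for every $i$. Comparing two of these expressions yields $\rho(P_i-P_j)=\rho\mu(\beta_i-\beta_j)$, which is a constant. Hence $\deg(P_i-P_j)=0$, using the convention that the degree is $0$ because $P_i-P_j$ is a constant. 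Equivalently, we may quote the last part of Lemma~\ref{lem:form_h_with_m} applied to $f$: the elementary symmetric functions $e_1,\ldots,e_{t-1}$ agree on all the $X_i$. The sets $X_i$ are pairwise disjoint of cardinality $t$, as required.

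For the converse, take pairwise disjoint sets $X_1,\ldots,X_{n_y}$ of size $t$ such that $P_i-P_j$ is constant for all $i,j$. Set $f=P_1$ and $\lambda_i=P_1-P_i\in\F$, so that $f=\lambda_i+P_i$ and $f$ is constant equal to $\lambda_i$ on $X_i$. The $\lambda_i$ are pairwise distinct: if $\lambda_i=\lambda_j$ with $i\neq j$, then $P_i=P_j$, hence $X_i=X_j$, contradicting disjointness (for $t\geq1$). Let $U=\bigsqcup_i X_i$, so $|U|=tn_y=n_x$, and $\Lambda=\{\lambda_1,\ldots,\lambda_{n_y}\}$. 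Let $V=\{-\lambda_1,\ldots,-\lambda_{n_y}\}$, so $|V|=n_y$, and define $g(y)=y$ and $h=f(x)+y$. Then $\supp(h)\subseteq\{1,\ldots,x^t,y\}\subseteq\mathcal M$, $\deg f=t$ and $\deg g=1$. Moreover $f(U)=\Lambda=-g(V)$, each fiber $f^{-1}(\lambda_i)\cap U=X_i$ has size $t$, and each fiber $g^{-1}(-\lambda_i)\cap V$ has size $1$. Also $h\in R_{U\times V}$ provided $t<n_x$ and $1<n_y$. The converse direction of Proposition~\ref{prop:noPolyexist} therefore gives $h\in Z_{U\times V}(\mathcal M)\setminus\Pi_{U\times V}(\mathcal M)$, and so $(U,V,h)\in\mathcal T(\mathcal M)$.

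The main obstacle is not conceptual; it is these degenerate bookkeeping conditions. The hypothesis $\max(\mathcal M)=\{x^t,y\}\subseteq\Delta(I_S)$ already forces $t<n_x$ and $1<n_y$, which is exactly what makes $h\in R_{U\times V}$ hold and makes $\mathcal M\subseteq\Delta(I_{U\times V})$ meaningful. The other point to handle carefully is reading ``$\deg=0$'' as ``the difference is a constant'', which includes the case where the difference is zero; that case is then excluded by disjointness.
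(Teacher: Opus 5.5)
Your proof is correct and follows the same route as the paper. The paper's proof simply specializes Proposition~\ref{prop:form_h_with_m} to $d_x=t$, $d_y=1$, so that there are $n_y$ blocks and the $Y_i$ are singletons. Your converse builds $h=P_1(x)+y$ on $U=\bigsqcup_i X_i$, $V=\{-\lambda_i\}$ and applies the converse half of Proposition~\ref{prop:noPolyexist}; this only makes explicit what the paper leaves to the remark after Proposition~\ref{prop:form_h_with_m}.
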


\begin{proof}
It follows from Proposition~\ref{prop:form_h_with_m}.
\end{proof}

Lemma~\ref{lem:form_h_with_m} also shows that, to determine all admissible polynomials \(h\), we must first construct the families of subsets satisfying the conditions in Proposition~\ref{prop.d1}. Thus, classifying the relevant extremal polynomials naturally leads to a finite-field version of the Prouhet--Tarry--Escott problem with additional conditions. The following example illustrates how our problem differs from the usual Prouhet--Tarry--Escott problem, even when considered over a finite field.

\begin{example}\label{ex.11}
Let $\mathbb{F}_{11}$ be the field with $11$ elements, and let $\lambda,\mu\in\mathbb{F}_{11}$ be such that $(3\mu-\lambda^2)/7$ is a quadratic residue in $\mathbb{F}_{11}$. According to \cite[Theorem 2.6]{PTE-zp}, there exist three multisubsets $X_1,X_2,X_3$ of $\mathbb{F}_{11}$ satisfying
$$\sum_{\alpha\in X_i}\alpha=\lambda
\qquad \text{ and } \qquad
\sum_{\alpha\in X_i}\alpha^2=\mu.$$
Suppose that $X_1, X_2, X_3$ could be chosen to have no repeated elements and to be, in addition, pairwise disjoint. As
$$\left(\sum_{\alpha\in X_i}\alpha\right)^2
=\sum_{\alpha\in X_i}\alpha^2+
2\sum_{\substack{\alpha,\beta\in X_i\\ \alpha<\beta}}\alpha\beta,
\quad \text{ then } \quad
\sum_{\substack{\alpha,\beta\in X_i\\ \alpha<\beta}}\alpha\beta
=6(\lambda^2-\mu), \quad 1\leq i\leq 3.$$
Therefore,
$$P_i(x)=\prod_{\alpha\in X_i}(x-\alpha)
=x^3-\lambda x^2+6(\lambda^2-\mu)x-\prod_{\alpha\in X_i}\alpha.$$
It follows that $\deg(P_i(x)-P_j(x)) = 0$ whenever $i\neq j$.
Set
$X=X_1\sqcup X_2\sqcup X_3$
and
$Y=\{\prod_{\alpha\in X_i}\alpha:1\leq i\leq3\}$,
and define
$$h(x,y)=x^3-\lambda x^2+6(\lambda^2-\mu)x-y.$$
Then $|X|=9$, $|Y|=3$, and, for every $\alpha\in X_i$,
$h\left(\alpha,\prod_{\gamma\in X_i}\gamma\right)=0$.
Thus, if $\mathcal M=\{1,x,x^2,x^3,y\}$, then
$(X,Y,h)\in\mathcal T(\mathcal M)$.

However, no three \emph{sets} satisfying the stated power-sum conditions can be chosen in $\mathbb{F}_{11}$: of the three multisets given by \cite[Theorem 2.6]{PTE-zp}, exactly one consists of three pairwise distinct elements, the other two containing a repeated element. For instance, for $\lambda=0$ and $\mu=7$ (so that $(3\mu-\lambda^2)/7=3=5^2$, a quadratic residue in $\mathbb{F}_{11}$), the three multisets are
$$\{0,3,8\}, \qquad \{1,5,5\}, \qquad \{6,6,10\},$$
and only the first is a genuine set. Therefore, $\mathcal T(\mathcal M)=\emptyset$ when $n_x=9$, $n_y=3$, and $d=3$.
\end{example}

This example shows that the existence of enough solutions to the Prouhet--Tarry--Escott equations does not by itself guarantee the existence of a noncanonical extremal polynomial. The additional requirement that the relevant subsets be pairwise disjoint can constitute a genuine obstruction.

Thus, the existence and classification of noncanonical extremal polynomials is connected with the construction of families.
\[
X_1,\ldots,X_t\subseteq\F
\]
that are pairwise disjoint and satisfy
\[ \sum_{\alpha\in X_i}\alpha^r
= \sum_{\alpha\in X_j}\alpha^r, \qquad 1\leq r\leq d-1 \qquad \text{ and } \qquad 1\leq i < j \leq t.\]
Equivalently, the associated monic polynomials
\[
P_i(x)=\prod_{\alpha\in X_i}(x-\alpha)
\] must differ only in their constant terms:
$P_i(x)-P_j(x)\in\F.$

In the next section, we focus on the first nontrivial degrees and obtain explicit descriptions and counting formulas for the corresponding extremal polynomials.

\section{Quadratic Extremal Polynomials}
\label{sec:quadratic}

We continue with the notation from previous sections. In particular, $S=X\times Y\subseteq \F^2$ is a finite Cartesian product with $|X|=n_x$ and $|Y|=n_y$, and $\mathcal M\subseteq \Delta(I_S)$ is a decreasing set. In this section, we specialize the results of Section~\ref{sec:structure} to the first nontrivial degrees. More precisely, we study the extremal polynomials arising from the case when
\[
\max(\mathcal{M})=\{x^2,y\}
\qquad\text{or}\qquad
\max(\mathcal{M})=\{x^2,y^2\}.
\]
By Proposition~\ref{prop:noPolyexist}, every nontrivial extremal polynomial is necessarily of the form
\[
h(x,y)=f(x)+g(y),
\]
where the inverse images of $f$ and $g$ determine a partition of the sets $X$ and $Y$. This section makes this description explicit and counts the corresponding admissible triples $(X, Y,h)$ in $\mathcal T({\mathcal M})$.

The behavior depends strongly on the characteristic of the underlying finite field.

\subsection{Fields of characteristic $2$}

Throughout this subsection, we assume that $\F_q$ has characteristic $2$. Since the Frobenius map
$x\longmapsto x^2$ is $\F_2$-linear, quadratic polynomials admit a particularly simple description. This lets us explicitly determine the image sets of the maps defining the extremal polynomials and count all admissible configurations.

We begin with the case
$\max(\mathcal M)=\{x^2,y\}$.
\begin{proposition}\label{prop.char2}
Assume that $\operatorname{char}(\F_q)=2$ and that there exists
$t\in\Z_{>0}$ such that $n_x=2t$ and $n_y=t$. Take
$\mathcal M=\{1,x,x^2,y\}$. Then,
$$|\mathcal T(\mathcal M)|=(q-1)^3q\binom{q/2}{t}.$$
\end{proposition}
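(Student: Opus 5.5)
The plan is to parametrize the triples $(X,Y,h)\in\mathcal T(\mathcal M)$ directly through Proposition~\ref{prop:noPolyexist}, here with $d_x=2$, $d_y=1$ and $n_x/d_x=n_y/d_y=t$. By that proposition, $(X,Y,h)\in\mathcal T(\mathcal M)$ if and only if the following hold:
\begin{itemize}
\item $h=f(x)+g(y)$ with $\deg f=2$ and $\deg g=1$;
\item $f(X)=-g(Y)=\Lambda$ with $|\Lambda|=t$;
\item every fiber $f^{-1}(\lambda)\cap X$ has exactly two elements.
\end{itemize}
Since $f+g$ only sees the sum of the two constant terms, I will write $h$ uniquely as
\[
h=ax^2+bx+dy+c,\qquad a,d\in\F_q^*,\ b,c\in\F_q.
\]
I will then count the pairs $(h,X)$ and show that $Y$ is determined by them.

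\textbf{Key step (characteristic $2$).} Put $f(x)=ax^2+bx$. For $u\neq v$ we have
\[
f(u)-f(v)=a(u+v)^2+b(u+v)=(u+v)\bigl(a(u+v)+b\bigr),
\]
so $f(u)=f(v)$ with $u\neq v$ if and only if $u+v=b/a$. If $b=0$, then $f$ is injective on $\F_q$, because Frobenius is bijective. In that case no fiber has two points, so $b=0$ is excluded and $b\in\F_q^*$. If $b\neq0$, the fibers of $f$ on $\F_q$ are exactly the $q/2$ cosets $\{u,u+b/a\}$ of the additive subgroup $\{0,b/a\}$.

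Consequently $X$ must be a union of exactly $t$ of these cosets, and conversely any such union has all its $f$-fibers of size $2$. This gives $\binom{q/2}{t}$ choices of $X$ for each admissible $h$, and this binomial is $0$ automatically when $2t>q$. The values $f$ takes on these $t$ cosets are distinct, so $\Lambda=f(X)$ has size $t$.

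\textbf{Determining $Y$.} Because $y\mapsto dy$ is a bijection of $\F_q$, the condition $-g(Y)=\Lambda$ determines $Y$ uniquely:
\[
Y=\{(f(u)+c)/d:u\in X\},
\]
computed in characteristic $2$, where signs are irrelevant. This set has exactly $t=n_y$ elements. By Proposition~\ref{prop:noPolyexist}, every such $(X,Y,h)$ lies in $\mathcal T(\mathcal M)$, and every element of $\mathcal T(\mathcal M)$ arises in this way.

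\textbf{The count.} The choices are $a,b,d\in\F_q^*$ and $c\in\F_q$, together with $X$, which gives
\[
|\mathcal T(\mathcal M)|=(q-1)^3q\binom{q/2}{t}.
\]

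\textbf{Main obstacle.} The main point to be careful about is bookkeeping rather than depth. One must not double count by splitting the constant term between $f$ and $g$. One must also check that the two-element-fiber condition forces $b\neq0$ and makes the fibers independent of $c$ and $d$, so that the choice of $X$ is genuinely independent of the remaining parameters.
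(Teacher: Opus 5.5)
Your proof is correct and takes essentially the same route as the paper. The paper writes $h=\rho(x^2+\lambda x+\mu y+\nu)$, uses that $x\mapsto x^2+\lambda x$ is $\F_2$-linear with kernel $\{0,\lambda\}$, and counts $t$-subsets $X'$ of its image, which is equivalent to your choice of $t$ cosets of $\{0,b/a\}$; $Y=\mu^{-1}(X'+\nu)$ is then forced, and your explicit argument that $b=0$ is impossible (Frobenius is injective) spells out a step the paper takes implicitly from Proposition~\ref{prop:form_h_with_m}.
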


\begin{proof}
Let $(X,Y,h)$ a triple in $\mathcal T(\mathcal M)$. By
Propositions~\ref{prop:noPolyexist} and~\ref{prop:form_h_with_m}, we know 
$$h(x,y)=\rho(x^2+\lambda x+\mu y+\nu),$$
where $\rho,\lambda,\mu\in\F_q^*$ and $\nu\in\F_q$. Fix such a polynomial and define $f(x)=x^2+\lambda x$. As $\operatorname{char}(\F_q)=2$, the map $f:\F_q\longrightarrow\F_q$ is $\F_2$-linear. Moreover, $\ker(f)=\{0,\lambda\}$.

We now claim that, for subsets $X,Y\subseteq\F_q$ with $|X|=2t$ and
$|Y|=t$, the triple $(X,Y,h)$ belongs to $\mathcal T(\mathcal M)$ if
and only if there exists a subset
$X'\subseteq\operatorname{Im}(f)$ with $|X'|=t$ such that
$$X=f^{-1}(X')
\text{ and }
Y=\mu^{-1}(X'+\nu).$$
Suppose first that $(X,Y,h)\in\mathcal T(\mathcal M)$. By Proposition~\ref{prop:noPolyexist}, there exists a subset $X'\subseteq\F_q$ with $|X'|=t$ such that $f(X)=-g(Y)=-(\mu Y+\nu)=X'$ and $|f^{-1}(\xi)\cap X|=2$ for $\xi\in X'$. In particular, $X'=f(X)\subseteq\operatorname{Im}(f)$. As $\ker(f)=\{0,\lambda\}$, the equality $|f^{-1}(\xi)\cap X|=2$ implies $f^{-1}(\xi)\subseteq X$ for every $\xi\in X'$. Hence $f^{-1}(X')\subseteq X$. As $f(X)=X'$, $X \subseteq f^{-1}(f(X)) = f^{-1}(X')$. Therefore, $X=f^{-1}(X')$. Moreover, since $\operatorname{char}(\F_q)=2$ and $\mu\neq0$, it follows that $Y=\mu^{-1}(X'+\nu)$. 

Conversely, suppose that there exists a subset $X'\subseteq\operatorname{Im}(f)$ with $|X'|=t$ such that $X=f^{-1}(X')$ and $Y=\mu^{-1}(X'+\nu)$. For every $\xi\in X'$, set $\beta_\xi=\mu^{-1}(\xi+\nu)$. Then $\beta_\xi\in Y$, and $h(\alpha,\beta_\xi)=0$ if and only if $f(\alpha)=\xi$. Thus, $Z_{X\times Y}(h) = \bigsqcup_{\xi\in X'} \left(f^{-1}(\xi)\times\{\beta_\xi\}\right)$. Since $|f^{-1}(\xi)| = 2$, we obtain $|Z_{X\times Y}(h)|=2|X'|=2t=n_x =z_{X\times Y}(\mathcal M)$. Thus $h\in Z_{X\times Y}(\mathcal M)$. Finally, $in(h)=\{x^2,y\}$, so $|in(h)\cap\max(\mathcal M)|=2$. By Proposition~\ref{prop.1init}, $h\notin\Pi_{X\times Y}(\mathcal M)$. Therefore, $(X,Y,h)\in\mathcal T(\mathcal M)$.

We now need to count. There are $\binom{q/2}{t}$ choices for $X'$. There are $q-1$ choices for each of $\rho$, $\lambda$, and $\mu$, and there are $q$ choices for $\nu$. Since the representation $h(x,y)=\rho(x^2+\lambda x+\mu y+\nu)$ is unique, distinct choices of these parameters give distinct polynomials. Therefore,
$|\mathcal T(\mathcal M)|=(q-1)^3q\binom{q/2}{t}$,
as claimed.
\end{proof}

For the case $\max(\mathcal M)=\{x^2,y^2\}$, the essential ingredient is the size of the intersection of the image sets of two quadratic maps.

\begin{lemma}\label{lem:imageIntersection} Assume that $\operatorname{char}(\F_q)=2$. Let $f(x)=x^2+\lambda x$ and $g(y)=\mu y^2+\nu y+\rho$, where $\lambda,\mu,\nu\in\F_q^*$ and $\rho\in\F_q$. Then,
$$|\operatorname{Im}(f)\cap\mathrm{Im}(g)|=
\begin{cases}
\frac{q}{2}& \text{ if } \; \lambda^2=\frac{\nu^2}{\mu} \text{ and }  \rho\in\operatorname{Im}(f),\\
0& \text{ if } \; \lambda^2=\frac{\nu^2}{\mu}  \text{ and } \rho\notin\operatorname{Im}(f),\\
\frac{q}{4}&\text{ otherwise. }
\end{cases}$$
\end{lemma}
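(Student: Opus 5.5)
The plan is to turn both images into cosets of $\F_2$-linear hyperplanes of $\F_q$ (viewed as an $\F_2$-vector space of dimension $m$, $q=2^m$) and then intersect two affine hyperplanes.

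First, $f(x)=x^2+\lambda x$ is $\F_2$-linear with kernel $\{0,\lambda\}$. Hence $\operatorname{Im}(f)$ is an $\F_2$-subspace $H_f$ of dimension $m-1$, so $|H_f|=q/2$.

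Next, write $g(y)=\rho+\mu\,(y^2+(\nu/\mu)y)$. The map $y\mapsto y^2+(\nu/\mu)y$ is again $\F_2$-linear, with kernel $\{0,\nu/\mu\}$, so its image $H'$ is a hyperplane. Then $\operatorname{Im}(g)=\rho+\mu H'$, and $H_g:=\mu H'$ is also an $\F_2$-hyperplane. So $\operatorname{Im}(g)$ is a coset of a hyperplane.

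Hyperplanes are described by the absolute trace $\mathrm{Tr}:\F_q\to\F_2$. For $c\in\F_q^*$, every hyperplane has the form $\{z:\mathrm{Tr}(cz)=0\}$ for a unique $c$. The standard fact is that $\operatorname{Im}(x^2+ax)=\{z:\mathrm{Tr}(z/a^2)=0\}$. Indeed, $\mathrm{Tr}\bigl((x^2+ax)/a^2\bigr)=\mathrm{Tr}\bigl((x/a)^2\bigr)+\mathrm{Tr}(x/a)=0$, and the cardinalities match.

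This gives the two hyperplanes explicitly:
\[
H_f=\{z:\mathrm{Tr}(z/\lambda^2)=0\},
\qquad
H_g=\mu\{w:\mathrm{Tr}(w\mu^2/\nu^2)=0\}=\{z:\mathrm{Tr}(z\mu/\nu^2)=0\}.
\]
So $H_f=H_g$ exactly when $1/\lambda^2=\mu/\nu^2$, that is, when $\lambda^2=\nu^2/\mu$.

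The count then splits into two cases.

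\textbf{Case $\lambda^2=\nu^2/\mu$.} Here $\operatorname{Im}(g)=\rho+H_f$. This coset equals $H_f$ when $\rho\in H_f=\operatorname{Im}(f)$, giving an intersection of size $q/2$. Otherwise it is the complementary coset, giving intersection $0$.

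\textbf{Case $\lambda^2\neq\nu^2/\mu$.} Now $H_f\neq H_g$ are distinct hyperplanes, so $H_f\cap H_g$ has codimension $2$ and $H_f+H_g=\F_q$. Write $\rho=a+b$ with $a\in H_f$ and $b\in H_g$. Then
\[
H_f\cap(\rho+H_g)=H_f\cap(a+H_g)=a+(H_f\cap H_g),
\]
which has size $q/4$. This step needs $m\ge2$. If $m=1$, two distinct hyperplanes of $\F_2$ cannot exist, so this case is vacuous.

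The main point needing care is identifying $H_g$ correctly after rescaling by $\mu$ and confirming the trace criterion. The rest is linear algebra over $\F_2$.
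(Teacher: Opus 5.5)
Your proof is correct and follows essentially the same route as the paper: both describe $\operatorname{Im}(f)$ and $\operatorname{Im}(g)-\rho$ as the trace hyperplanes $\{u:\operatorname{Tr}(u/\lambda^2)=0\}$ and $\{u:\operatorname{Tr}(\mu u/\nu^2)=0\}$, and then split on whether these coincide. Your decomposition $\rho=a+b$ with $a\in H_f$, $b\in H_g$ just makes explicit the paper's remark that every coset of one hyperplane meets the other in $q/4$ points. Your uniqueness-of-$c$ remark makes explicit something the paper leaves implicit, namely that the hyperplanes are distinct when $\lambda^2\neq\nu^2/\mu$.
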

\begin{proof}
Let $g_0(y)=g(y)-\rho=\mu y^2+\nu y$. Both $f$ and $g_0$ are
$\F_2$-linear maps. Moreover, as
$\ker(f)=\{0,\lambda\}$ and
$\ker(g_0)=\{0,\nu/\mu\}$, we get
$|\operatorname{Im}(f)|=|\operatorname{Im}(g_0)|=q/2$.

Let $\operatorname{Tr}:\F_q\longrightarrow\F_2$ be the trace. As
$\{\alpha^2+\alpha : \alpha \in \F_q \} = \{\beta \in \F_q : \operatorname{Tr}(\beta) =0\}$,
\begin{align*}
\operatorname{Im}(f)
=&\{\alpha^2+\lambda\alpha : \alpha \in \F_q\}
=\{\lambda^2((\alpha/\lambda)^2+\alpha/\lambda) : \alpha \in \F_q \}\\
=&\{\lambda^2(\alpha^2+\alpha) : \alpha \in \F_q \}
=\{\lambda^2 \beta\in\F_q:\operatorname{Tr}(\beta)=0\}\\
=&\{u\in\F_q:\operatorname{Tr}(u/\lambda^2)=0\}.
\end{align*}
Similarly, as
$g_0(y)=(\nu^2/\mu)((\mu y/\nu)^2+\mu y/\nu)$,
\begin{align*}
\operatorname{Im}(g_0)
&=\{u\in\F_q:\operatorname{Tr}(\mu u/\nu^2)=0\}
\qquad \text{ and } \qquad
\operatorname{Im}(g) = \operatorname{Im}(g_0) + \rho.
\end{align*}

Define the $\F_2$-linear maps $F_1 \colon u\mapsto\operatorname{Tr}(u/\lambda^2)$ and $F_2 \colon u\mapsto\operatorname{Tr}(\mu u/\nu^2)$. Note that $\operatorname{Im}(f) = \ker(F_1)$, $\operatorname{Im}(g_0) = \ker(F_2)$, and $\operatorname{Im}(g)$ is an equivalent class of $\ker(F_2)$.

If $\lambda^2=\nu^2/\mu$, then $\ker(F_1) = \ker(F_2)$. If $\rho\in\operatorname{Im}(f)$, then 
$\operatorname{Im}(g) = \ker(F_2)$ and 
$|\operatorname{Im}(f)\cap\operatorname{Im}(g)|=q/2$. 
If $\rho\notin\operatorname{Im}(f)$, then
$\operatorname{Im}(g)$ is an equivalent class of $\ker(F_2)$, therefore
$|\operatorname{Im}(f)\cap\operatorname{Im}(g)|=0$.

Assume now that $\lambda^2\neq\nu^2/\mu$. Then $\ker(F_1)$ and $\ker(F_2)$ are distinct codimension-one $\F_2$-subspaces of $\F_q$. Their intersection has codimension two, and hence
$|\operatorname{Im}(f)\cap\operatorname{Im}(g_0)|=q/4$. As $\ker(F_1)$ is not contained in $\ker(F_2)$, each equivalent class of $\ker(F_2)$ meets $\ker(F_1)$ in the same number of elements.
\end{proof}

The lemma leads to an explicit enumeration of all extremal triples.
\begin{theorem}\label{thm.char2}
Assume that $\operatorname{char}(\F_q)=2$ and that there exists
$t \leq q/2 \in\Z_{>0}$ such that $n_x=n_y=2t$. Take
$\mathcal M=\{1,x,x^2,y,y^2\}$. Then,
\[
|\mathcal T(\mathcal M)|=
\frac{(q-1)^3q}{2}
\left[\binom{q/2}{t}+2(q-2)\binom{q/4}{t}\right].
\]
\end{theorem}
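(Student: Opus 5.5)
The plan is to copy the proof of Proposition~\ref{prop.char2}: first parametrize the possible polynomials $h$, then, for each fixed $h$, describe the admissible pairs $(X,Y)$ in terms of the image sets of two quadratic maps, and finally sum the counts using Lemma~\ref{lem:imageIntersection}.

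\emph{Step 1: parametrizing $h$.} Let $(X,Y,h)\in\mathcal T(\mathcal M)$. By Proposition~\ref{prop:noPolyexist}, $h=f(x)+g(y)$ with $\deg f=\deg g=2$, and every nonempty fiber of $f$ on $X$ (resp.\ of $g$ on $Y$) has exactly two elements. Pulling out the leading coefficient of $x^2$, write
\[
h=\rho\bigl(x^2+\lambda x+\mu y^2+\nu y+\kappa\bigr),\qquad \rho,\mu\in\F_q^*,\ \lambda,\nu,\kappa\in\F_q .
\]
If $\lambda=0$, then $x\mapsto x^2$ is injective in characteristic $2$, so no fiber can have two elements. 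Hence $\lambda\neq0$, and for the same reason $\nu\neq0$. The representation is unique, so distinct parameters give distinct polynomials. Set $F(x)=x^2+\lambda x$ and $G(y)=\mu y^2+\nu y+\kappa$. Both maps are $2$-to-$1$ onto their images, which have size $q/2$.

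\emph{Step 2: describing $(X,Y)$ for fixed $h$.} Since $-G=G$ in characteristic $2$, I claim that $(X,Y,h)\in\mathcal T(\mathcal M)$ if and only if there is $\Lambda\subseteq\operatorname{Im}(F)\cap\operatorname{Im}(G)$ with $|\Lambda|=t$ such that $X=F^{-1}(\Lambda)$ and $Y=G^{-1}(\Lambda)$.
\begin{itemize}
\item For necessity, take $\Lambda=F(X)=G(Y)$ from Proposition~\ref{prop:noPolyexist}. Because every fiber of $F$ and of $G$ has size exactly two, the condition $|F^{-1}(\xi)\cap X|=2$ forces $F^{-1}(\xi)\subseteq X$, and similarly for $G$. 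This gives the two displayed equalities, exactly as in Proposition~\ref{prop.char2}.
\item For sufficiency, $Z_{X\times Y}(h)=\bigsqcup_{\xi\in\Lambda}F^{-1}(\xi)\times G^{-1}(\xi)$ has $4t=2n_x=z_{X\times Y}(\mathcal M)$ elements. Moreover $in(h)=\{x^2,y^2\}$, so Proposition~\ref{prop.1init} shows that $h$ is noncanonical.
\end{itemize}
Thus the number of triples with a given $h$ is $\binom{|\operatorname{Im}F\cap\operatorname{Im}G|}{t}$.

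\emph{Step 3: summing.} Lemma~\ref{lem:imageIntersection} applies to $F$ and $G$, with $\kappa$ in the role of its $\rho$. Fix $\rho,\lambda,\nu$, which gives $(q-1)^3$ choices.
\begin{itemize}
\item Exactly one $\mu$ satisfies $\mu=\nu^2/\lambda^2$. For this $\mu$, the $q/2$ values $\kappa\in\operatorname{Im}(F)$ each contribute $\binom{q/2}{t}$, and the remaining values of $\kappa$ contribute $0$.
\item The other $q-2$ values of $\mu$, together with all $q$ values of $\kappa$, each contribute $\binom{q/4}{t}$.
\end{itemize}
Therefore
\[
|\mathcal T(\mathcal M)|=(q-1)^3\left[\tfrac q2\binom{q/2}{t}+q(q-2)\binom{q/4}{t}\right]
=\frac{(q-1)^3q}{2}\left[\binom{q/2}{t}+2(q-2)\binom{q/4}{t}\right],
\]
which is the claimed formula.

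The main obstacle is the bijection in Step 2, and in particular the point that the fiber condition in Proposition~\ref{prop:noPolyexist} forces the \emph{whole} preimages to lie in $X$ and $Y$. In characteristic $2$ this works because all nonempty fibers of an $\F_2$-affine $2$-to-$1$ map have size exactly two. The remaining steps are direct bookkeeping with Lemma~\ref{lem:imageIntersection}.
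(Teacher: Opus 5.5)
Your proposal is correct and follows essentially the same route as the paper: you parametrize $h$, identify the admissible pairs $(X,Y)$ for a fixed $h$ with the $t$-subsets of $\operatorname{Im}F\cap\operatorname{Im}G$, and sum via Lemma~\ref{lem:imageIntersection}, splitting into the single exceptional value of $\mu$ and the remaining $q-2$ values. Your explicit argument that both linear coefficients must be nonzero (injectivity of Frobenius would otherwise make every fiber have at most one element) fills in a step that the paper only asserts.
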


\begin{proof}
Let $h(x,y)=\alpha(f(x)+g(y))$, with
$f(x)=x^2+\beta x$ and
$g(y)=\lambda y^2+\mu y+\rho$, where
$X,Y\subseteq\F_q$ have cardinality $2t$.

Let $\xi\in\operatorname{Im}(f)\cap\operatorname{Im}(g)$. If
$\beta\mu\neq0$, then
$X_\xi=f^{-1}(\xi)$ and
$Y_\xi=g^{-1}(\xi)$ are subsets of cardinality $2$ such that
$|Z_{X_\xi\times Y_\xi}(h)|=4$. By
Proposition~\ref{prop:noPolyexist}, the last condition implies that
$|Z_{X\times Y}(h)|
=
z_{X\times Y}(\mathcal M)
=
4t$
if and only if $\beta\mu\neq0$ and there exists
$X'\subseteq\operatorname{Im}(f)\cap\operatorname{Im}(g)$, with
$|X'|=t$, such that
$X=f^{-1}(X')$ and $Y=g^{-1}(X')$.

Therefore, for each
$(X,Y,h)\in\mathcal T(\mathcal M)$, there are unique
$\alpha,\beta,\lambda,\mu,\rho\in\F_q^*$, $\rho\in\F_q$, and
$X'\subseteq\operatorname{Im}(f)\cap\operatorname{Im}(g)$, with
$|X'|=t$, where
$f(x)=x^2+\beta x$,
$g(y)=\lambda y^2+\mu y+\rho$, and
$h=\alpha(f+g)$.

Therefore, the possible choices for
$\alpha,\beta,\lambda,\mu,\rho$ and $X'$ are as follows, according to
Lemma~\ref{lem:imageIntersection}:

\begin{itemize}
    \item There are $(q-1)^3$ choices for
    $\alpha,\beta,\mu\in\F_q^*$.

    \item If $\lambda$ is such that
    $\mu^2=\lambda\beta^2$, we have to choose
    $\rho\in\operatorname{Im}(f)$; otherwise,
    $\operatorname{Im}(f)\cap\operatorname{Im}(g)$ would be empty.
    After choosing $\rho$, we may choose any
    $X'\subseteq\operatorname{Im}(f)\cap\operatorname{Im}(g)$ of
    cardinality $t$. In total, there are
    $\underbrace{q/2}_{\rho}
    \binom{q/2}{t}$
    possibilities.

    \item If we choose any other $\lambda$, we may choose $\rho$
    freely and choose $X'$ from an intersection of cardinality
    $q/4$. Thus, there are
    $\underbrace{(q-2)}_{\lambda}
    \underbrace{q}_{\rho}
    \binom{q/4}{t}$ 
    possibilities.
\end{itemize}

Thus, the total number of possibilities is
$(q-1)^3
\left[
\frac{q}{2}\binom{q/2}{t}+(q-2)q\binom{q/4}{t}
\right]$.
\end{proof}

The following examples illustrate the previous theorem and provide alternative descriptions of the corresponding extremal polynomials.

\begin{example}\rm
    Let $\F_q$ a finite field of characteristic $2$ and consider the set $\mathcal{M}=\{1,x,y,x^2,y^2\}$ and $S=\F_q^2$. By Theorem \ref{thm:minRM}, we know that any $h\in Z_S(\mathcal{M})$ is of the form 
    $$(\alpha x+\beta y+\gamma_1)(\alpha x+\beta y+\gamma_2).$$

    But the opposite is also true due to the characteristic; in fact, for any $\alpha,\beta,\gamma_1,\gamma_2\in\F_q$, with $\alpha,\beta\neq 0$ and $\gamma_1\neq\gamma_2$, we have
    $$(\alpha x+\beta y+\gamma_1)(\alpha x+\beta y+\gamma_2)=\alpha^2x^2+\beta^2y^2+(\gamma_1+\gamma_2)(\alpha x+\beta y)+\gamma_1\gamma_2\in Z_S(\mathcal{M})$$

    The total number of polynomials with the previous form and $\alpha\beta\neq 0$ is
    $$(q-1)^2{q\choose 2}=\frac{(q-1)^3q}{2},$$

    \noindent which coincides with the number given by Theorem \ref{thm.char2}.
\end{example}

\begin{example}
Let us verify the formula in Theorem~\ref{thm.char2} for $t=2$. Take
\[
X=\{\alpha_1,\alpha_2,\alpha_3,\alpha_4\}
\qquad \text{ and } \qquad
Y=\{\beta_1,\beta_2,\beta_3,\beta_4\}
\]
and suppose that $h=\rho(f(x)+g(y))$ has $8$ zeros in $X\times Y$, where
\[
f(x)=x^2+\mu x \qquad \text{ and } \qquad g(y)=\lambda_1(y^2+\lambda_2y+\lambda_3).
\]
As mentioned above, this implies that, up to permutation, there are
$\gamma_1,\gamma_2\in\F_q$, with $\gamma_1\neq\gamma_2$, such that
\begin{align*}
f(x)&=(x+\alpha_1)(x+\alpha_2)+\gamma_1
=(x+\alpha_3)(x+\alpha_4)+\gamma_2 \qquad \text{ and }\\
g(y)&=\lambda_1(y+\beta_1)(y+\beta_2)+\gamma_1
=\lambda_1(y+\beta_3)(y+\beta_4)+\gamma_2.
\end{align*}
These two identities imply that $\sum_{i=1}^4 \alpha_i = \sum_{i=1}^4 \beta_i = 0$
and
$\gamma_1+\gamma_2
=\alpha_1\alpha_2+\alpha_3\alpha_4
=\lambda_1(\beta_1\beta_2+\beta_3\beta_4)$.
Therefore,
$\lambda_1=
\left(\alpha_1\alpha_2+\alpha_3\alpha_4\right)
\left(\beta_1\beta_2+\beta_3\beta_4\right)^{-1}$.

Conversely, choose
$\rho,\alpha_1,\alpha_2,\alpha_3,\beta_1,\beta_2,\beta_3\in\F_q$,
with $\rho\neq0$, such that
$\alpha_i\neq\alpha_j$ and $\beta_i\neq\beta_j$ whenever
$1\leq i<j\leq3$. Define
$\alpha_4=\alpha_1+\alpha_2+\alpha_3$,
$\beta_4=\beta_1+\beta_2+\beta_3$,
and
$\lambda_1=
\left(\alpha_1\alpha_2+\alpha_3\alpha_4\right)
\left(\beta_1\beta_2+\beta_3\beta_4\right)^{-1}$.
Then,
\[
h(x,y)
=\rho\bigl((x+\alpha_1)(x+\alpha_2)
+\lambda_1(y+\beta_1)(y+\beta_2)\bigr)
\]
is a polynomial with $8$ zeros in
$\{\alpha_1,\alpha_2,\alpha_3,\alpha_4\}
\times \{\beta_1,\beta_2,\beta_3,\beta_4\}$.
Thus, the number of elements in $\mathcal T(\mathcal M)$ is obtained as follows:

\begin{itemize}
    \item The sets $\{\alpha_i\}_{i=1}^3$ and $\{\beta_i\}_{i=1}^3$
    and $\rho\in\F_q^*$ give ${\binom{q}{3}^2}{(q-1)}$
    possibilities.

    \item There are
    $\binom{4}{2}^2=36$
    ways to choose two elements of
    $\{\alpha_1,\alpha_2,\alpha_3,\alpha_4\}$ and two elements of
    $\{\beta_1,\beta_2,\beta_3,\beta_4\}$ to write
    $f(x)=(x+\alpha_1)(x+\alpha_2)+\gamma_1$
    and
    $g(y)=\lambda_1(y+\beta_1)(y+\beta_2)+\gamma_1$.

    \item The choice of the pair
    $\{(\alpha_1,\alpha_2),(\beta_1,\beta_2)\}$
    is equivalent to the choice of the complementary pair
    $\{(\alpha_3,\alpha_4),(\beta_3,\beta_4)\}$.
    Therefore, we must divide the previous factor by $2$.

    \item Finally, the ordered choices
    $(\alpha_1,\alpha_2,\alpha_3,\alpha_4)$,
    $(\alpha_1,\alpha_2,\alpha_4,\alpha_3)$,
    $(\alpha_2,\alpha_1,\alpha_3,\alpha_4)$, and
    $(\alpha_2,\alpha_1,\alpha_4,\alpha_3)$
    produce the same polynomial. The analogous statement holds for
    the elements $\beta_i$. Thus, we must divide the total number of
    choices by $16$.
\end{itemize}
Multiplying these factors, we obtain
$|\mathcal T(\mathcal M)|=
\frac{36}{32}(q-1)\binom{q}{3}^2$.

If we use Theorem~\ref{thm.char2}, we obtain
\begin{align*}
|\mathcal T(\mathcal M)|
&=
\frac{(q-1)^3q}{2}
\left[
\binom{q/2}{2}+2(q-2)\binom{q/4}{2}
\right]\\
&=
\frac{(q-1)^3q^2}{8}
\left[\left(\frac q2-1\right)+(q-2)\left(\frac q4-1\right)
\right]\\
&=
\frac{(q-1)^3q^2}{8}
\frac{q-2}{2}
\left(\frac q2-1\right)
=
\frac{(q-1)^3q^2(q-2)^2}{32},
\end{align*}
which agrees with the explicit count above.
\end{example}

\subsection{Fields of odd characteristic}

We now assume that $q$ is odd. In this situation, quadratic
polynomials are naturally related to the set
\[
\operatorname{SQ}_q=
\{\lambda^2:\lambda\in\F_q^*\}
\]
of nonzero squares. As in the characteristic-two case,
the classification problem reduces to counting suitable intersections
of image sets.

We first study the case
$\max(\mathcal M)=\{x^2,y\}$.

\begin{proposition}\label{prop.charneq2}
Assume that $\operatorname{char}(\F_q)$ is odd and that there exists
$t\in\Z_{>0}$ such that $n_x=2t$ and $n_y=t$. Take
$\mathcal M=\{1,x,x^2,y\}$. Then,
\[
|\mathcal T(\mathcal M)|=
(q-1)^2q^2\binom{(q-1)/2}{t}.
\]
\end{proposition}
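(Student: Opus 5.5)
The plan is to follow the proof of Proposition~\ref{prop.char2} closely. The one change is that in odd characteristic $x\mapsto x^2+\lambda x$ is no longer additive; instead I would complete the square. First, by Propositions~\ref{prop:noPolyexist} and~\ref{prop:form_h_with_m}, every $(X,Y,h)\in\mathcal T(\mathcal M)$ has
\[
h(x,y)=\rho\bigl(x^2+\lambda x+\mu y+\nu\bigr),\qquad \rho,\mu\in\F_q^*,\ \lambda,\nu\in\F_q,
\]
since $\deg_y h=1$ and $\deg_x h=2$ are forced. This representation is unique: normalize the coefficient of $x^2$ to $1$, and then $\rho$ is that leading coefficient. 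Note that, unlike in characteristic $2$, the value $\lambda=0$ must be allowed. Set $f(x)=x^2+\lambda x=(x+\lambda/2)^2-\lambda^2/4$ and $g(y)=\mu y+\nu$.

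Next I would describe the fibers of $f$ on $\F_q$. For $\xi\in\F_q$, the equation $f(x)=\xi$ has exactly two solutions iff $\xi+\lambda^2/4\in\operatorname{SQ}_q$, exactly one solution ($x=-\lambda/2$) iff $\xi=-\lambda^2/4$, and none otherwise. Hence the set $D_\lambda=\{\xi:|f^{-1}(\xi)|=2\}=\operatorname{SQ}_q-\lambda^2/4$ has exactly $(q-1)/2$ elements.

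The key claim, mirroring the proof of Proposition~\ref{prop.char2}, is the following. For $|X|=2t$ and $|Y|=t$, we have $(X,Y,h)\in\mathcal T(\mathcal M)$ if and only if there is $X'\subseteq D_\lambda$ with $|X'|=t$, $X=f^{-1}(X')$ and $Y=-\mu^{-1}(X'+\nu)$.
\begin{itemize}
\item For necessity, Proposition~\ref{prop:noPolyexist} gives $\Lambda=f(X)=-g(Y)$ with $|f^{-1}(\xi)\cap X|=2$ for every $\xi\in\Lambda$. Since every fiber of $f$ over $\F_q$ has at most two points, this forces $\Lambda\subseteq D_\lambda$ and $f^{-1}(\xi)\subseteq X$. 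It follows that $X=f^{-1}(\Lambda)$. The set $Y$ is then determined by solving $-(\mu y+\nu)=\xi$, which gives $Y=-\mu^{-1}(\Lambda+\nu)$.
\item For sufficiency, $Z_{X\times Y}(h)$ is the disjoint union of $f^{-1}(\xi)\times\{\beta_\xi\}$ with $\beta_\xi=-\mu^{-1}(\xi+\nu)$. This has $2t=n_x=n_yd_x=z_{X\times Y}(\mathcal M)$ elements. Since $in(h)=\{x^2,y\}$, Proposition~\ref{prop.1init} shows $h$ is noncanonical.
\end{itemize}

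Finally I would count. There are $q-1$ choices for each of $\rho$ and $\mu$, and $q$ choices for each of $\lambda$ and $\nu$. For each such choice there are $\binom{(q-1)/2}{t}$ choices of $X'$, and these give distinct pairs $(X,Y)$ because $X'=f(X)$. This yields $(q-1)^2q^2\binom{(q-1)/2}{t}$.

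The only delicate point is the fiber analysis. One must exclude the critical value $-\lambda^2/4$, whose fiber has a single point, and check that $|D_\lambda|=(q-1)/2$ does not depend on $\lambda$. This independence is what makes the count factor cleanly.
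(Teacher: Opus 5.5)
Your proof is correct and follows essentially the same route as the paper: use Proposition~\ref{prop:noPolyexist} to reduce to $h=\rho(f(x)+g(y))$ with $g$ affine, match admissible triples bijectively with $t$-subsets of the values whose fibers have two points, and multiply by the number of parameter choices. The only difference is cosmetic: the paper writes $h=\alpha\bigl((x-\beta)^2+\lambda y+\mu\bigr)$ already in completed-square form, so its value set is $\operatorname{SQ}_q$ itself rather than your translate $D_\lambda=\operatorname{SQ}_q-\lambda^2/4$; your $Y=-\mu^{-1}(X'+\nu)$ also has the correct sign, whereas the paper's $Y=\lambda^{-1}(X'-\mu)$ does not.
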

\begin{proof}
Let
$h=\alpha((x-\beta)^2+\lambda y+\mu)$,
where $\alpha,\lambda\neq0$. Take $f(x)=(x-\beta)^2$ and $g(y)=\lambda y+\mu$. For $X,Y\subseteq\F_q$, we have $(X,Y,h)\in\mathcal T(\mathcal M)$ if and only if there exists $X'\subseteq \operatorname{SQ}_q$, with $|X'|=t$, such that $X=f^{-1}(X')$ and $Y=\lambda^{-1}(X'-\mu)$. Thus, we only need to count the possible choices of $X'$. In total, we have
\[
|\mathcal T(\mathcal M)|=
\underbrace{(q-1)^2}_{\alpha,\lambda}
\underbrace{q^2}_{\beta,\mu}
\binom{(q-1)/2}{t},
\]
which completes the proof.
\end{proof}

To treat the case $\max(\mathcal M)=\{x^2,y^2\}$, we introduce the sets 
\[
\operatorname{SQ}_{\alpha,\beta}=\{u\in\operatorname{SQ}_q:\alpha u+\beta \in \operatorname{SQ}_q\},
\]
whose cardinalities determine the number of admissible decompositions. The next lemma provides a closed formula for their size.

\begin{lemma}\label{lem:SQ_a_b_general_form}
Assume that $\operatorname{char}(\F_q)$ is odd and take
$(\alpha,\beta)\in\F_q^*\times\F_q^*$. Then,
$$
|\operatorname{SQ}_{\alpha,\beta}|
=
\frac{
q-2-\chi(\alpha)-\chi(\beta)
-\chi(-1)\chi(\alpha)\chi(\beta)
}{4}.
$$
\end{lemma}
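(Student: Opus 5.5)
We count with the quadratic character $\chi$ of $\F_q^*$, extended by $\chi(0)=0$. For $u\in\F_q$ the quantity $\frac{1}{2}(1+\chi(u))$ equals $1$ if $u\in\operatorname{SQ}_q$, equals $0$ if $u$ is a nonsquare, and equals $\tfrac12$ if $u=0$. Hence
\[
|\operatorname{SQ}_{\alpha,\beta}|=\sum_{u\in\F_q^*}\tfrac14\bigl(1+\chi(u)\bigr)\bigl(1+\chi(\alpha u+\beta)\bigr)\;-\;\tfrac14\bigl(1+\chi(\beta')\bigr)\cdot\tfrac12\cdot\text{(correction)} ,
\]
where the correction accounts for the single value $u_0=-\beta/\alpha\neq0$ at which $\alpha u+\beta=0$. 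That point must contribute $0$, but the formula assigns it $\tfrac14(1+\chi(u_0))$. So I would write
\[
|\operatorname{SQ}_{\alpha,\beta}|=\tfrac14\sum_{u\in\F_q^*}\bigl(1+\chi(u)\bigr)\bigl(1+\chi(\alpha u+\beta)\bigr)-\tfrac14\bigl(1+\chi(-\beta/\alpha)\bigr).
\]

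Expand the product into four sums over $u\in\F_q^*$.
\begin{enumerate}
\item $\sum 1=q-1$.
\item $\sum\chi(u)=0$.
\item $\sum_{u\neq0}\chi(\alpha u+\beta)=\sum_{v\in\F_q}\chi(v)-\chi(\beta)=-\chi(\beta)$, using the substitution $v=\alpha u+\beta$, which omits only $v=\beta$.
\item The Jacobsthal-type sum $\sum_{u}\chi(u(\alpha u+\beta))=\chi(\alpha)\sum_u\chi(u^2+(\beta/\alpha)u)$. For $\beta\neq0$ this is a quadratic with nonzero discriminant, so the standard evaluation $\sum_{u\in\F_q}\chi(au^2+bu+c)=-\chi(a)$ gives $-\chi(\alpha)$. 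The $u=0$ term vanishes, so summing over $\F_q^*$ is the same.
\end{enumerate}

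Assembling these, and using $\chi(-\beta/\alpha)=\chi(-1)\chi(\alpha)\chi(\beta)$ because $\chi(\alpha^{-1})=\chi(\alpha)$, we get
\[
|\operatorname{SQ}_{\alpha,\beta}|=\tfrac14\bigl(q-1-\chi(\beta)-\chi(\alpha)\bigr)-\tfrac14\bigl(1+\chi(-1)\chi(\alpha)\chi(\beta)\bigr),
\]
which is exactly the claimed formula.

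The main obstacle is the character sum of the quadratic in step 4, together with getting the boundary corrections right: the terms $u=0$ and $\alpha u+\beta=0$. For step 4 I would either cite the classical result (Lidl–Niederreiter, Theorem 5.48) or prove it directly. To prove it, complete the square to reduce to $\sum_u\chi(u^2-c)$ with $c\neq0$. Then count solutions of $u^2-w^2=c$, which has $q-1$ solutions since it factors as $(u-w)(u+w)=c$; this gives $\sum_u(1+\chi(u^2-c))=q-1$ and hence the sum equals $-1$. As a final sanity check I would test the formula for $q=5$ or $q=7$ with small $\alpha,\beta$.
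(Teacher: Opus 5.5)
Your proof is correct, and its skeleton matches the paper's exactly. Both use the indicator $\tfrac12(1+\chi(u))$, extend the sum to all of $\F_q^*$ with the correction $\tfrac14(1+\chi(-\beta/\alpha))$ at $u_0=-\beta/\alpha$, and expand into four sums. Three of those sums you evaluate just as the paper does.

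The one genuine difference is the cross term $\sum_{u\in\F_q^*}\chi(u)\chi(\alpha u+\beta)$. You treat it as a Jacobsthal-type sum of a quadratic with nonzero discriminant. You then either cite $\sum_{u}\chi(au^2+bu+c)=-\chi(a)$ or prove it by completing the square and counting the $q-1$ solutions of $(u-w)(u+w)=c$. The paper instead uses $\chi(u)=\chi(u^{-1})$ to write $\chi(u)\chi(\alpha u+\beta)=\chi(\alpha+\beta u^{-1})$. Since $u\mapsto\alpha+\beta u^{-1}$ is a bijection from $\F_q^*$ onto $\F_q\setminus\{\alpha\}$ (because $\beta\neq 0$), the sum is $-\chi(\alpha)$ at once.

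The paper's trick is shorter and needs nothing beyond $\sum_{v\in\F_q}\chi(v)=0$. It does rely on the quadratic splitting with a root at $0$, which can always be arranged by an affine shift when the quadratic splits. Your route is heavier but covers every quadratic with nonzero discriminant, irreducible ones included; that extra generality is not needed for this lemma.

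One cleanup: delete your first displayed formula, with the undefined $\chi(\beta')$ and the placeholder ``correction''. The second display, subtracting $\tfrac14(1+\chi(-\beta/\alpha))$, is the correct one and is all the argument uses.
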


\begin{proof}
We observe that for $u\in\F_q^*$, the indicator function of
$\operatorname{SQ}_q$ is $\frac{1+\chi(u)}{2}$. Then
$$
|\operatorname{SQ}_{\alpha,\beta}|
=
\sum_{\substack{u\in\F_q^*\\ \alpha u+\beta\neq0}}
\frac{1+\chi(u)}{2}
\cdot
\frac{1+\chi(\alpha u+\beta)}{2}.
$$
We now extend the sum over all $u\in\F_q^*$ and subtract the
contribution at $u_0=-\beta/\alpha$:
$$
|\operatorname{SQ}_{\alpha,\beta}|
=
\sum_{u\in\F_q^*}
\frac{1+\chi(u)}{2}
\cdot
\frac{1+\chi(\alpha u+\beta)}{2}
-
\frac{1+\chi(u_0)}{2}
\cdot
\frac{1+\chi(0)}{2}.
$$
Since $\chi(0)=0$, this becomes
\begin{equation}\label{eq:square_ab_chi}
|\operatorname{SQ}_{\alpha,\beta}|
=
\sum_{u\in\F_q^*}
\frac{1+\chi(u)}{2}
\cdot
\frac{1+\chi(\alpha u+\beta)}{2}
-
\frac{1+\chi(u_0)}{4}.
\end{equation}
We now expand the sum:
$$
\sum_{u\in\F_q^*}
\frac{1+\chi(u)}{2}
\cdot
\frac{1+\chi(\alpha u+\beta)}{2}
=
\frac{1}{4}
\sum_{u\in\F_q^*}
\bigl(
1+\chi(u)+\chi(\alpha u+\beta)
+\chi(u)\chi(\alpha u+\beta)
\bigr).
$$
We evaluate each term separately. Note that
$\sum_{u\in\F_q^*}1=q-1$ and
$\sum_{u\in\F_q^*}\chi(u)=0$.
Consider $\sum_{u\in\F_q^*}\chi(\alpha u+\beta)$.

The map $u\mapsto v=\alpha u+\beta$ is a bijection from
$\F_q^*$ onto $\F_q\setminus\{\beta\}$. Hence,
$$
\sum_{u\in\F_q^*}\chi(\alpha u+\beta)
=
\sum_{v\in\F_q\setminus\{\beta\}}\chi(v)
=
-\chi(\beta).
$$
Using that $\chi(x)\chi(y)=\chi(x^{-1}y)$, we get
$$
\sum_{u\in\F_q^*}\chi(u)\chi(\alpha u+\beta)
=
\sum_{u\in\F_q^*}\chi(\alpha+\beta u^{-1})
=
\sum_{\substack{u\in\F_q\\u\neq\alpha}}\chi(u)
=
-\chi(\alpha).
$$

Combining all contributions, we obtain
$$
\sum_{u\in\F_q^*}
\frac{1+\chi(u)}{2}
\cdot
\frac{1+\chi(\alpha u+\beta)}{2}
=
\frac{1}{4}
\bigl(
(q-1)-\chi(\beta)-\chi(\alpha)
\bigr).
$$
Since $u_0=-\beta/\alpha$, from
\eqref{eq:square_ab_chi} we obtain
$$
|\operatorname{SQ}_{\alpha,\beta}|
=
\frac{
q-2-\chi(\alpha)-\chi(\beta)-\chi(-\beta/\alpha)
}{4}.
$$
Finally,
$\chi(-\beta/\alpha)
=
\chi(-1)\chi(\alpha)\chi(\beta)$,
which completes the proof.
\end{proof}

Summing these cardinalities yields the following counting formula.

\begin{corollary}\label{cor:SQ_a_b_sum}
Assume that $\operatorname{char}(\F_q)$
is odd and take $1\leq t\leq \frac{q-1}{2}$. Then, 
$$
\sum_{\alpha\in\F_q^*}\sum_{\beta\in\F_q}
\binom{|\operatorname{SQ}_{\alpha,\beta}|}{t}
=
\frac{q-1}{2}\binom{\frac{q-1}{2}}{t}
+
\begin{cases}
\displaystyle
\frac{(q-1)^2}{4}
\left(
\binom{\frac{q-5}{4}}{t}
+
3\binom{\frac{q-1}{4}}{t}
\right),\\
\hfill \text{if }q\equiv1\pmod 4,\\[10pt]
\displaystyle
\frac{(q-1)^2}{4}
\left(
3\binom{\frac{q-3}{4}}{t}
+
\binom{\frac{q+1}{4}}{t}
\right),\\
\hfill \text{if }q\equiv3\pmod 4.
\end{cases}
$$
\end{corollary}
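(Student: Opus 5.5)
We prove the identity by splitting the double sum according to whether $\beta=0$ or $\beta\neq 0$. For $\beta\neq 0$, Lemma~\ref{lem:SQ_a_b_general_form} shows that $|\operatorname{SQ}_{\alpha,\beta}|$ depends only on the pair $(\chi(\alpha),\chi(\beta))$ and on $\chi(-1)$. So that part of the sum reduces to counting how many $(\alpha,\beta)$ realize each sign pattern. The case $\beta=0$ is excluded from the lemma, since there $\beta\notin\F_q^*$, and must be handled directly.

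\emph{The terms with $\beta=0$.} Here $\operatorname{SQ}_{\alpha,0}=\{u\in\operatorname{SQ}_q:\alpha u\in\operatorname{SQ}_q\}$.
\begin{itemize}
\item If $\chi(\alpha)=1$, then $\alpha u$ is a nonzero square for every nonzero square $u$. Hence $\operatorname{SQ}_{\alpha,0}=\operatorname{SQ}_q$, which has $(q-1)/2$ elements.
\item If $\chi(\alpha)=-1$, then $\alpha u$ is a nonsquare for every $u\in\operatorname{SQ}_q$. Hence $\operatorname{SQ}_{\alpha,0}=\emptyset$, and $\binom{0}{t}=0$ because $t\geq 1$.
\end{itemize}
There are $(q-1)/2$ values of $\alpha$ with $\chi(\alpha)=1$. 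So these terms contribute exactly $\frac{q-1}{2}\binom{\frac{q-1}{2}}{t}$.

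\emph{The terms with $\beta\neq 0$.} Each of the four sign patterns $(\chi(\alpha),\chi(\beta))\in\{\pm1\}^2$ is realized by exactly $\frac{q-1}{2}\cdot\frac{q-1}{2}=\frac{(q-1)^2}{4}$ pairs in $\F_q^*\times\F_q^*$. We evaluate the formula of Lemma~\ref{lem:SQ_a_b_general_form} on each pattern, writing $\chi(-1)=1$ if $q\equiv 1\pmod 4$ and $\chi(-1)=-1$ if $q\equiv 3\pmod 4$.

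For $q\equiv1\pmod 4$:
\begin{itemize}
\item the pattern $(1,1)$ gives $(q-5)/4$;
\item the patterns $(1,-1)$ and $(-1,1)$ give $(q-2-0+1)/4=(q-1)/4$;
\item the pattern $(-1,-1)$ gives $(q-2+2-1)/4=(q-1)/4$.
\end{itemize}

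For $q\equiv 3\pmod 4$:
\begin{itemize}
\item the pattern $(1,1)$ gives $(q-2-2+1)/4=(q-3)/4$;
\item the mixed patterns $(1,-1)$ and $(-1,1)$ give $(q-2-1)/4=(q-3)/4$;
\item the pattern $(-1,-1)$ gives $(q-2+2+1)/4=(q+1)/4$.
\end{itemize}

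Multiplying each value of $\binom{|\operatorname{SQ}_{\alpha,\beta}|}{t}$ by $\frac{(q-1)^2}{4}$ and adding the $\beta=0$ contribution gives both cases of the stated formula.

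\emph{Main obstacle.} The only delicate point is the bookkeeping of signs when substituting into the formula of Lemma~\ref{lem:SQ_a_b_general_form}, particularly the term $-\chi(-1)\chi(\alpha)\chi(\beta)$. As a check, each computed cardinality should be an integer with the correct residue modulo $4$. A second check is to sum $|\operatorname{SQ}_{\alpha,\beta}|$ over all $\beta\neq0$ for fixed $\alpha$ and compare with a direct count of the pairs $(u,\alpha u+\beta)$.
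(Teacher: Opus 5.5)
Your proposal is correct and follows essentially the same route as the paper. Both split off the terms with $\beta=0$, where $\operatorname{SQ}_{\alpha,0}$ is all of $\operatorname{SQ}_q$ or empty according to $\chi(\alpha)$. For $\beta\neq 0$, both count the $(q-1)^2/4$ pairs in each sign class $(\chi(\alpha),\chi(\beta))$ and evaluate Lemma~\ref{lem:SQ_a_b_general_form}, and your case values agree with the paper's.
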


\begin{proof}
We can write
$$
\sum_{\alpha \in \F_q^*}\sum_{\beta\in\F_q}
\binom{|\operatorname{SQ}_{\alpha,\beta}|}{t}
=
\sum_{\alpha \in \F_q^*}
\binom{|\operatorname{SQ}_{\alpha,0}|}{t}
+
\sum_{\alpha \in \F_q^*}\sum_{\beta\in\F_q^*}
\binom{|\operatorname{SQ}_{\alpha,\beta}|}{t}.
$$
Since $|\operatorname{SQ}_{\alpha,0}|=(q-1)/2$ if
$\chi(\alpha)=1$ and $|\operatorname{SQ}_{\alpha,0}|=0$ if
$\chi(\alpha)=-1$, and there are $(q-1)/2$ choices of
$\alpha\in\F_q^*$ with $\chi(\alpha)=1$, we have that 
$$
\sum_{\alpha\in\F_q^*}
\binom{|\operatorname{SQ}_{\alpha,0}|}{t} 
=
\frac{q-1}{2}\binom{\frac{q-1}{2}}{t}.
$$
Next, consider $\beta\neq0$. We now want to compute
$$
\sum_{\alpha \in \F_q^*}\sum_{\beta\in\F_q^*}
\binom{|\operatorname{SQ}_{\alpha,\beta}|}{t}.
$$
There are $q-1$ choices for $\beta$ and $q-1$ choices for $\alpha$,
and for each we have
$\chi(\alpha)\in\{\pm1\}$ and $\chi(\beta)\in\{\pm1\}$. Since
$$
\bigl|\{u\in\F_q^*:\chi(u)=1\}\bigr|
=
\bigl|\{u\in\F_q^*:\chi(u)=-1\}\bigr|
=
\frac{q-1}{2},
$$
each combination $(\chi(\alpha),\chi(\beta))$ occurs
$(q-1)^2/4$ times. In these terms, from
Lemma~\ref{lem:SQ_a_b_general_form}, we have
$$
|\operatorname{SQ}_{\alpha,\beta}|
=
\frac{
q-2-\chi(\alpha)-\chi(\beta)
-\chi(-1)\chi(\alpha)\chi(\beta)
}{4}.
$$
Note that $\chi(-1)=1$ if $q\equiv1\pmod4$ and
$\chi(-1)=-1$ if $q\equiv3\pmod4$. Thus, in the case
$q\equiv1\pmod4$, the four cases
$(\chi(\alpha),\chi(\beta))$ give 
$$
|\operatorname{SQ}_{\alpha,\beta}|
=
\frac{q-5}{4},\
\frac{q-1}{4},\
\frac{q-1}{4},\
\frac{q-1}{4},
$$
for
$(\chi(\alpha),\chi(\beta))
=
(+1,+1),(+1,-1),(-1,+1),(-1,-1)$,
respectively. Hence, their binomial contributions are
$\binom{\frac{q-5}{4}}{t}$ once and
$\binom{\frac{q-1}{4}}{t}$ three times.

Similarly, in the case $q\equiv3\pmod4$, the four cases
$(\chi(\alpha),\chi(\beta))$ give 
$$
|\operatorname{SQ}_{\alpha,\beta}|
=
\frac{q-3}{4},\
\frac{q-3}{4},\
\frac{q-3}{4},\
\frac{q+1}{4},
$$
respectively. Hence, their binomial contributions are
$\binom{\frac{q+1}{4}}{t}$ once and
$\binom{\frac{q-3}{4}}{t}$ three times. Thus,
$$
\sum_{\alpha \in \F_q^*}\sum_{\beta\in\F_q^*}
\binom{|\operatorname{SQ}_{\alpha,\beta}|}{t}
=
\frac{(q-1)^2}{4}
\begin{cases}
\displaystyle \binom{\frac{q-5}{4}}{t}
+3\binom{\frac{q-1}{4}}{t}
\; \text{ if }\; q \equiv 1 \pmod{4}, \\[8pt]
\displaystyle 3\binom{\frac{q-3}{4}}{t}
+\binom{\frac{q+1}{4}}{t}
\; \text{ if }\; q \equiv 3 \pmod{4}. 
\end{cases}
$$
This completes the proof.
\end{proof}

Combining the previous results, we obtain the complete enumeration of extremal triples in odd characteristic.

\begin{theorem}\label{thm.charodd}
Assume that $\operatorname{char}(\F_q)$ is odd and that there is $1\leq a\leq q-1$ such that $t=\frac{q-a}{2}\in\Z$ and $n_x=n_y=2t$. Take $\mathcal M=\{1,x,x^2,y,y^2\}$. Then,
$$
|\mathcal T(\mathcal M)|=
\frac{q^2(q-1)^2}{2}\binom{\frac{q-1}{2}}{t}
+\frac{q^2(q-1)^3}{4}
\begin{cases}
\displaystyle
\left(
\binom{\frac{q-5}{4}}{t}+3\binom{\frac{q-1}{4}}{t}
\right),
\text{if }q\equiv1\pmod 4,\\
\displaystyle
\left(
3\binom{\frac{q-3}{4}}{t}+\binom{\frac{q+1}{4}}{t}
\right),
\hfill \text{if }q\equiv3\pmod 4.
\end{cases}
$$
\end{theorem}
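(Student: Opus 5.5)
We will follow the pattern of Proposition~\ref{prop.charneq2} and Theorem~\ref{thm.char2}: parametrize every admissible $h$, reduce the conditions on $(X,Y)$ to choosing a $t$-subset of an intersection of image sets, and then sum over parameters using Corollary~\ref{cor:SQ_a_b_sum}.

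\textbf{Step 1: normalize $h$.} By Propositions~\ref{prop:noPolyexist} and~\ref{prop:form_h_with_m}, any $(X,Y,h)\in\mathcal T(\mathcal M)$ has $h=f(x)+g(y)$ with $\deg f=\deg g=2$. Since $q$ is odd we may complete squares and write uniquely
\[
h(x,y)=\gamma\bigl((x-\beta_1)^2-\alpha(y-\beta_2)^2-\beta\bigr),
\]
with $\gamma,\alpha\in\F_q^*$ and $\beta_1,\beta_2,\beta\in\F_q$. This gives $(q-1)q^2$ choices of $(\gamma,\beta_1,\beta_2)$, and the parameter pair $(\alpha,\beta)\in\F_q^*\times\F_q$ carries the arithmetic.

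\textbf{Step 2: reduce to choosing a subset.} Put $F(x)=(x-\beta_1)^2$ and $G(y)=\alpha(y-\beta_2)^2+\beta$. By Proposition~\ref{prop:noPolyexist}, $(X,Y,h)\in\mathcal T(\mathcal M)$ if and only if $F(X)=G(Y)=\Lambda$ with $|\Lambda|=t$, and every fiber of $F$ on $X$ and of $G$ on $Y$ over $\Lambda$ has exactly two points.
\begin{itemize}
\item The fibers of $F$ of size $2$ are exactly those over $\operatorname{SQ}_q$, and $F^{-1}(u)$ then lies entirely in $X$. Hence $X=F^{-1}(\Lambda)$ with $\Lambda\subseteq\operatorname{SQ}_q$.
\item Likewise $G^{-1}(u)$ has two points if and only if $(u-\beta)/\alpha\in\operatorname{SQ}_q$, so $Y=G^{-1}(\Lambda)$.
\end{itemize}
Conversely, any such $\Lambda$ gives $4t=z_{X\times Y}(\mathcal M)$ zeros, and $h$ is noncanonical by Proposition~\ref{prop.1init}, exactly as in the proof of Proposition~\ref{prop.char2}. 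So the admissible $\Lambda$ are the $t$-subsets of
\[
\{u\in\operatorname{SQ}_q:\alpha^{-1}u-\alpha^{-1}\beta\in\operatorname{SQ}_q\}=\operatorname{SQ}_{\alpha^{-1},-\alpha^{-1}\beta}.
\]

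\textbf{Step 3: count.} The map $(\alpha,\beta)\mapsto(\alpha^{-1},-\alpha^{-1}\beta)$ is a bijection of $\F_q^*\times\F_q$. Therefore
\[
|\mathcal T(\mathcal M)|=(q-1)q^2\sum_{\alpha\in\F_q^*}\sum_{\beta\in\F_q}\binom{|\operatorname{SQ}_{\alpha,\beta}|}{t},
\]
and Corollary~\ref{cor:SQ_a_b_sum} evaluates the double sum, giving exactly the stated formula.

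\textbf{Main obstacle.} The delicate point is the bookkeeping in Step 2. We must check that degenerate fibers (the single points $u=0$ for $F$ and $u=\beta$ for $G$) are correctly excluded, which is what restricts $u$ to $\operatorname{SQ}_q$ and $(u-\beta)/\alpha\neq0$. We must also check that the parametrization of $h$ is injective, so that no triple is counted twice.
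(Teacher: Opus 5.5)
Your proposal is correct and follows the same route as the paper. You complete squares to parametrize $h$ by $(q-1)q^2$ choices of scaling and translations together with a pair $(\alpha,\beta)\in\F_q^*\times\F_q$, use Proposition~\ref{prop:noPolyexist} to identify the admissible $(X,Y)$ with $t$-subsets of a set of the form $\operatorname{SQ}_{\cdot,\cdot}$, and sum via Corollary~\ref{cor:SQ_a_b_sum}. Your bookkeeping is slightly more careful than the paper's: the paper writes $g_1(Y_i)=-\lambda_i$ and asserts $\{\lambda_i\}\subseteq\operatorname{SQ}_{\alpha,\beta}$ directly, whereas the correct set is $\operatorname{SQ}_{\alpha^{-1},-\alpha^{-1}\beta}$, and your reindexing by the involution $(\alpha,\beta)\mapsto(\alpha^{-1},-\alpha^{-1}\beta)$ is what justifies applying the corollary.
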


\begin{proof}
Every polynomial of the form
$h(x,y)=f(x)+g(y)\in\F_q[x,y]$ with
$\deg(f)=\deg(g)=2$ can be uniquely written as
$$
h(x,y)
=
\rho\bigl[(x-\gamma)^2
-\alpha(y-\delta)^2-\beta\bigr],
$$
with
$\alpha,\beta,\gamma,\delta,\rho\in\F_q$ and
$\alpha,\rho\neq0$. We denote
$$
f_1(x):=(x-\gamma)^2
\qquad\text{and}\qquad
g_1(y):=\alpha(y-\delta)^2+\beta.
$$

For every $X,Y\subseteq\F_q$ with $|X|=|Y|=2t$, by
Proposition~\ref{prop:noPolyexist}, we know that
$|Z_{X\times Y}(h)|=4t$ if and only if there exist
$\lambda_1,\ldots,\lambda_t\in\F_q$ such that $X$ is a disjoint union
of sets $X_1,\ldots,X_t$ of size two, $Y$ is a disjoint union of sets
$Y_1,\ldots,Y_t$ of size two, and
$f_1(X_i)=\lambda_i$
and
$g_1(Y_i)=-\lambda_i$
for every $i\in\{1,\ldots,t\}$. Therefore,
$
\{\lambda_1,\ldots,\lambda_t\}
\subseteq \operatorname{SQ}_{\alpha,\beta}.
$

Conversely, for every
$
\{\lambda_1,\ldots,\lambda_t\}
\subseteq \operatorname{SQ}_{\alpha,\beta},
$
we can find pairwise disjoint sets $X_1,\ldots,X_t$ of size two and
pairwise disjoint sets $Y_1,\ldots,Y_t$ of size two such that
$f_1(X_i)=\lambda_i$
and
$g_1(Y_i)=-\lambda_i$
for every $i\in\{1,\ldots,t\}$. Therefore, for
$$
X:=\bigcup_{i=1}^tX_i
\qquad\text{and}\qquad
Y:=\bigcup_{i=1}^tY_i,
$$
we have
$
h\in Z_{X\times Y}(\mathcal M)
\setminus \Pi_{X\times Y}(\mathcal M).
$

We have proved that, for fixed
$(\alpha,\beta)\in\F_q^*\times\F_q$ and for every subset of size $t$
of $\operatorname{SQ}_{\alpha,\beta}$, there exist unique
$X,Y\subseteq\F_q$ of size $2t$ such that
$|Z_{X\times Y}(h)|=4t.$
Finally, we count all possible triples $(X, Y,h)$. Since the elements
$\gamma,\delta\in\F_q$ and $\rho\in\F_q^*$ can be freely chosen, we
get
$$
|\mathcal T(\mathcal M)|
=
q^2(q-1)
\sum_{\alpha\in\F_q^*}
\sum_{\beta\in\F_q}
\binom{|\operatorname{SQ}_{\alpha,\beta}|}{t}.
$$
By Corollary~\ref{cor:SQ_a_b_sum}, we obtain the desired expression.
\end{proof}

Theorems~\ref{thm.char2} and~\ref{thm.charodd} completely describe the quadratic case. In particular, they determine exactly when noncanonical extremal polynomials exist and how many admissible triples $(X, Y,h)$ realize the maximum number of zeros.

In the next section, we apply these structural and enumerative results to monomial decreasing evaluation codes and determine the corresponding minimum-weight codewords.

\section{Applications to Monomial Decreasing Codes}
\label{sec:codes}

In this section, we apply the results obtained in the previous sections to monomial decreasing codes. The connection is immediate: minimum-weight codewords correspond to polynomials attaining the largest possible number of zeros on the evaluation grid. Consequently, the structural and enumerative results established for the sets $Z_S(\mathcal M)$ translate into descriptions and counting formulas for minimum-weight codewords.

\subsection{Evaluation codes on Cartesian grids}
Let $S=X\times Y$ be a Cartesian set, where $X$ and $Y$ are subsets of a finite field $\F_q$ with $|X|=n_x$ and $|Y|=n_y$, and let $\mathcal{M}$ be a subset of $\Delta(I_S)$. Write $S=\{P_1,\ldots,P_n\}$, where $n=n_xn_y$. We consider the evaluation map
\[
\ev_S \colon \F_q[x,y] \longrightarrow \F_q^n,
\qquad
f \longmapsto (f(P_1),\ldots,f(P_n)).
\]
When $S=\F_q^2$, we write $\ev=\ev_{\F_q^2}$.
\begin{definition}
The evaluation code associated with $S$ and $\mathcal{M}$ is
\[
\mathcal C(S,\mathcal M) =
\ev_S(\Span_{\F_q}(\mathcal{M})).
\]
\end{definition}
If $\mathcal M \subseteq \Delta(I_S)$ is decreasing, then $\mathcal C(S,\mathcal M)$ is called a monomial decreasing code. 

We define the following footprint values on $S$.
\begin{itemize}
\item The footprint of $m=x^{a}y^{b}\in \Delta(I_S)$ is $\FB_S(m)=(n_x-a)(n_y-b)$.
\item The footprint of $f\in \Span_{\F_q}(\mathcal{M})$ is 
$\FB_S(f)=\min\{\FB_S(m):m\in in(f)\}$.
\item The footprint of $\mathcal C(S,\mathcal M)$ is
$\FB_S(\mathcal C(S,\mathcal M))=
\min\{\FB_S(m):m\in\mathcal M\}$.
\end{itemize}

The footprint values introduced above are naturally related to the quantities studied throughout the paper. Indeed,
\begin{align*}
    \FB_S(x^ay^b)&=|S|-z_S(a,b), \text{ and therefore}\\
    \FB_S(\mathcal C(S,\mathcal M))&=
    |S|-z_S(\mathcal M).
\end{align*}
Moreover, denoting by $w_H(\cdot)$ the Hamming weight and $d(\mathcal C(S,\mathcal M))$ the minimum distance of the code, we have
\begin{align*}
    w_H(\ev_S(f)) &= |S|-|Z_S(f)|\geq\FB_S(f) \text{ and }\\
    d(\mathcal C(S,\mathcal M)) &\geq \FB_S(\mathcal C(S,\mathcal M)).
\end{align*}
If $\mathcal M$ is decreasing, the minimum distance $d(\mathcal C(S,\mathcal M))$ is given by the footprint; see~\cite[Theorem 3.8]{LRV14}). Thus, 
\[
d(\mathcal C(S,\mathcal M))=
|S|-z_S(\mathcal M).
\]
Therefore, the set of minimum-weight codewords, denoted by $\MW(\mathcal C(S,\mathcal M))$, corresponds exactly to polynomials with the maximum possible number of zeros on the grid. In other words,
\[
\MW(\mathcal C(S,\mathcal M)) =
\{\ev_S(f):f\in Z_S(\mathcal M)\}.
\]

\subsection{One maximal monomial}
We first consider the situation where the support has a unique maximal monomial.

\begin{theorem}\label{thm.countonegen}
Take
$\mathcal M=\{x^ay^b:0\le a\le a_1,\ 0\le b\le b_2\}
\subseteq \Delta(I_{\F_q^2})$,
where $0\le a_1,b_2\le q-1$. Then
\[
\MW(\mathcal C(\F_q^2,\mathcal M))
=
\{\ev(f):f\in \Pi_{\F_q^2}(\mathcal M)\},
\]
and consequently
\[
|\MW(\mathcal C(\F_q^2,\mathcal M))|
=
(q-1)\binom{q}{a_1}\binom{q}{b_2}.
\]
\end{theorem}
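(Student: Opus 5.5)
The plan is to reduce the description to Proposition~\ref{cor.onemon} and then count canonical polynomials directly. The set $\mathcal M$ is decreasing with $\max(\mathcal M)=\{x^{a_1}y^{b_2}\}$, so Proposition~\ref{cor.onemon} gives $Z_{\F_q^2}(\mathcal M)=\Pi_{\F_q^2}(\mathcal M)$. Combined with the identity $\MW(\mathcal C(S,\mathcal M))=\{\ev_S(f):f\in Z_S(\mathcal M)\}$ established above (via \cite[Theorem 3.8]{LRV14}), this yields the first assertion immediately.

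For the count, I will first identify $\Pi_{\F_q^2}(\mathcal M)$ explicitly. Since $z_S(c,d)$ is strictly increasing in each coordinate on $\Delta(I_S)$ (because $n_x-d>0$ and $n_y-c>0$), we have $z_S(\mathcal M)=z_S(a_1,b_2)$, attained only at $(a_1,b_2)$. Now take a canonical element $c\prod_{\alpha\in X'}(x-\alpha)\prod_{\beta\in Y'}(y-\beta)$ with support in $\mathcal M$. Its support forces $|X'|\le a_1$ and $|Y'|\le b_2$. Its zero count is $z_S(|X'|,|Y'|)$, so extremality forces $|X'|=a_1$ and $|Y'|=b_2$. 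Conversely, Proposition~\ref{prop.notempty} shows that every such choice of $c$, $X'$, $Y'$ lies in $\Pi_{\F_q^2}(\mathcal M)$. Thus $\Pi_{\F_q^2}(\mathcal M)$ consists exactly of the products with $c\in\F_q^*$, $|X'|=a_1$ and $|Y'|=b_2$.

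It remains to show that $(c,X',Y')\mapsto \ev(f)$ is injective, which I expect to be the only point needing care. First, $f$ lies in $R_S$ because $a_1,b_2\le q-1$, and $\ev$ is injective on $R_S$ since $R_S$ is a system of representatives for $\F_q[x,y]/I_S$. Second, distinct triples give distinct polynomials by unique factorization in $\F_q[x,y]$: the scalar $c$ is the leading coefficient, and the sets $X'$ and $Y'$ are recovered as the linear factors of $f$. Counting the choices of $c$, $X'$ and $Y'$ then gives $(q-1)\binom{q}{a_1}\binom{q}{b_2}$.
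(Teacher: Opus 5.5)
Your proposal is correct and takes the same route as the paper, which simply cites Proposition~\ref{cor.onemon} to get $Z_{\F_q^2}(\mathcal M)=\Pi_{\F_q^2}(\mathcal M)$. Your identification of $\Pi_{\F_q^2}(\mathcal M)$ as the products with $|X'|=a_1$, $|Y'|=b_2$, together with the injectivity check, supplies the counting the paper leaves implicit. One harmless slip: the increments of $z_S$ in the two coordinates are $n_y-d$ and $n_x-c$, not $n_x-d$ and $n_y-c$, which makes no difference here since $n_x=n_y=q$.
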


\begin{proof}
It follows from Proposition~\ref{cor.onemon}.
\end{proof}

Therefore, when $|\max(\mathcal M)|=1$, every minimum-weight codeword arises from the expected family of coordinate factorizations.

\subsection{Two maximal monomials}
We now consider the first nontrivial case. Assume
\[
\max(\mathcal M)=
\{x^{a_1}y^{b_1},x^{a_2}y^{b_2}\}.
\]
Now, write
$\mathcal M=\mathcal M_1\cup\mathcal M_2$,
where
\[
\mathcal M_i
=\{x^ay^b: 0\le a\le a_i,\; 0\le b\le b_i\}.
\]
If $z_{\F_q^2}(a_1,b_1) \neq z_{\F_q^2}(a_2,b_2)$, then Theorem~\ref{thm.diffoot} shows that no additional minimum-weight codewords occur beyond those coming from the dominant rectangle. The interesting situation, which appears when $z_{\F_q^2}(a_1,b_1) = z_{\F_q^2}(a_2,b_2)$, is solved in Section~\ref{sec:reduction} by showing that the problem reduces to a decreasing set whose maximal monomials are $\{x^{d_x},y^{d_y}\}$.

\begin{figure}[ht]
    \centering
    \begin{subfigure}[t]{0.48\textwidth}
        \centering
        \begin{tikzpicture}[scale=0.6]
            \fill[yellow!20] (0,0) rectangle (8,8);
            \fill[blue!20] (0,0) rectangle (3,5);
            \fill[red!20] (0,0) rectangle (5,3);
            \fill[purple!30] (0,0) rectangle (3,3);

            \draw[gray!50, very thin] (0,0) grid (8,8);

            \draw[->] (0,0) -- (8.5,0) node[right] {$x$};
            \draw[->] (0,0) -- (0,8.5) node[above] {$y$};

            \draw[blue, thick] (0,0) rectangle (3,5);
            \draw[red, thick]  (0,0) rectangle (5,3);

            \node[below] at (3,0) {$a_1$};
            \node[below] at (5,0) {$a_2$};
            \node[below] at (8,0) {$q-1$};

            \node[left] at (0,3) {$b_2$};
            \node[left] at (0,5) {$b_1$};
            \node[left] at (0,8) {$q-1$};

            \filldraw[black] (3,5) circle (2.5pt)
                node[above right] {$(a_1,b_1)$};

            \filldraw[black] (5,3) circle (2.5pt)
                node[above right] {$(a_2,b_2)$};

            \node[blue]   at (1.5,4.2) {$\mathcal M_1$};
            \node[red]    at (4.0,1.8) {$\mathcal M_2$};
            \node[purple] at (1.5,1.5) {$\mathcal M_1\cap\mathcal M_2$};

            \node[below left] at (0,0) {$0$};

            \draw (8,0.10) -- (8,-0.10);
            \draw (0.10,8) -- (-0.10,8);
        \end{tikzpicture}
        \caption{Original configuration of $\mathcal M_1$ and $\mathcal M_2$.}
        \label{fig:cambio-coord-izq}
    \end{subfigure}
    \hfill
    \begin{subfigure}[t]{0.48\textwidth}
        \centering
        \begin{tikzpicture}[scale=0.75]
            \def\dx{2}
            \def\dy{2}
            \def\nxm{5}
            \def\nym{5}

            \fill[yellow!20] (0,0) rectangle (\nxm,\nym);
            \draw[gray!50, very thin] (0,0) grid (\nxm,\nym);

            \draw[->] (0,0) -- (\nxm+0.5,0) node[right] {$x$};
            \draw[->] (0,0) -- (0,\nym+0.5) node[above] {$y$};

            \draw[red, very thick] (0,0) -- (\dx,0);
            \draw[blue, very thick] (0,0) -- (0,\dy);

            \draw[black] (\dx,0) -- (\nxm,0);
            \draw[black] (0,\dy) -- (0,\nym);

            \draw (\dx,0.10) -- (\dx,-0.10);
            \draw (\nxm,0.10) -- (\nxm,-0.10);
            \draw (0.10,\dy) -- (-0.10,\dy);
            \draw (0.10,\nym) -- (-0.10,\nym);

            \node[below left] at (0,0) {$0$};

            \node[below] at (\dx,0) {$d_x$};
            \node[below] at (\nxm,0) {$q-1-a_1$};

            \node[left] at (0,\dy) {$d_y$};
            \node[left] at (0,\nym) {$q-1-b_2$};

        \end{tikzpicture}
        \caption{Reduced support with $\max(\mathcal M')=\{x^{d_x},y^{d_y}\}$.}
        \label{fig:cambio-coord-der}
    \end{subfigure}

    \caption{
    Reduction from a decreasing set
    $\mathcal M=\mathcal M_1\cup\mathcal M_2$
    with two maximal monomials of equal footprint value
    to a decreasing set $\mathcal M'$
    satisfying
    $\max(\mathcal M')=\{x^{d_x},y^{d_y}\}$.
    }
    \label{fig:cambio-coordenadas}
\end{figure}

The next theorem translates this reduction into the language of minimum-weight codewords.

\begin{theorem}\label{thm.totalcount}
Assume $\max(\mathcal M)=\{x^{a_1}y^{b_1},x^{a_2}y^{b_2}\}$ with $a_1<a_2$, $b_1>b_2$, and $ z_{\F_q^2}(a_1,b_1)=z_{\F_q^2}(a_2,b_2)$.
Write $\mathcal M=\mathcal M_1\cup\mathcal M_2$, where $\mathcal M_i=\{x^ay^b: 0\le a\le a_i,\; 0\le b\le b_i\}$. Define $\mathcal M'=\{1,x,\ldots,x^{d_x}\}\cup\{1,y,\ldots,y^{d_y}\}$ with $d_x=a_2-a_1$ and $d_y=b_1-b_2$. Then,
\[
\begin{aligned}
\MW(\mathcal C(\F_q^2,\mathcal M))={}&
\MW(\mathcal C(\F_q^2,\mathcal M_1))\cup
\MW(\mathcal C(\F_q^2,\mathcal M_2))\\
&\cup\left\{\ev\!
\left(h\prod_{\alpha\notin X}(x-\alpha)\prod_{\beta\notin Y}(y-\beta)\right)
:(X,Y,h)\in\mathcal T(M')\right\}.
\end{aligned}
\]
Moreover,
$\bigl|\MW(\mathcal C(\F_q^2,\mathcal M))\bigr|
=\bigl|\MW(\mathcal C(\F_q^2,\mathcal M_1))\cup
\MW(\mathcal C(\F_q^2,\mathcal M_2))\bigr|+
|\mathcal T(M')|$.
\end{theorem}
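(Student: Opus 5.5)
We prove the set equality by splitting $Z_{\F_q^2}(\mathcal M)$ according to $in(f)\cap\max(\mathcal M)$. We then show that the map $(X,Y,h)\mapsto \ev(h\prod_{\alpha\notin X}(x-\alpha)\prod_{\beta\notin Y}(y-\beta))$ is injective with image disjoint from the first two pieces.

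First, since $\mathcal M_i\subseteq\mathcal M$ and $z(a_1,b_1)=z(a_2,b_2)=z(\mathcal M)$, every element of $Z(\mathcal M_i)$ lies in $Z(\mathcal M)$. Conversely, let $f\in Z(\mathcal M)$. As observed before Proposition~\ref{prop.1init}, $in(f)\cap\max(\mathcal M)\neq\emptyset$.
\begin{itemize}
\item If the intersection is the single monomial $x^{a_i}y^{b_i}$, then Proposition~\ref{prop.1init} gives $f\in\Pi(\mathcal M)$. Hence $f=c\prod_{X'}(x-\alpha)\prod_{Y'}(y-\beta)$ with $|X'|=a_i$ and $|Y'|=b_i$, because the unique leading monomial is $x^{a_i}y^{b_i}$. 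Therefore $f\in\Pi(\mathcal M_i)\subseteq Z(\mathcal M_i)$.
\item If $in(f)=\max(\mathcal M)$, apply Proposition~\ref{prop.factorh} with $\gcd=x^{a_1}y^{b_2}$. It writes $f=h\prod_{X'}(x-\alpha)\prod_{Y'}(y-\beta)$ with $|X'|=a_1$, $|Y'|=b_2$ and $h\in Z_{S'}(\mathcal M')$, where $S'=X\times Y$, $X=\F_q\setminus X'$ and $Y=\F_q\setminus Y'$. Moreover $in(h)=\{x^{d_x},y^{d_y}\}$, so Proposition~\ref{prop.1init} (or Proposition~\ref{prop:noPolyexist}) gives $h\notin\Pi_{S'}(\mathcal M')$. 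Thus $(X,Y,h)\in\mathcal T(\mathcal M')$, with $|X|=q-a_1$ and $|Y|=q-b_2$ matching the sizes in the definition of $\mathcal T$.
\end{itemize}
For the reverse inclusion, take $(X,Y,h)\in\mathcal T(\mathcal M')$. Lemma~\ref{lemma.factors} shows that $f=h\prod\prod$ has maximal zeros. Its support lies in $x^{a_1}y^{b_2}\mathcal M'\subseteq\mathcal M$, and its zero count is
\[
a_1q+b_2q-a_1b_2+z_{X\times Y}(\mathcal M')=z(\mathcal M),
\]
which is a direct footprint computation using $d_x=a_2-a_1$ and $d_y=b_1-b_2$. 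This proves the set equality, after passing through $\ev$, which is injective on $R_{\F_q^2}$.

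For the count, we need two facts. First, codewords from the third set are not in $\MW(\mathcal C(\mathcal M_i))$. Such codewords come from $f$ with $in(f)=\{x^{a_1}y^{b_1},x^{a_2}y^{b_2}\}$, and $f$ is the unique representative in $R$, so $\supp f\not\subseteq\mathcal M_i$. Second, the map from $\mathcal T(\mathcal M')$ is injective. The sets $X$ and $Y$ are recovered from $f$ as the complements of the coordinate factors: $h$ has no factor $x-\alpha$ with $\alpha\in X$, since $h=P_i+\mu Q_i$ up to scalar and $\gcd(in(h))=1$ (use Theorem~\ref{th.maingeil}, or note $h(\alpha,y)$ has degree $d_y\ge1$ in $y$). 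The possible complications are that $h$ might have a factor $x-\alpha$ with $\alpha\notin X$, and that $\supp(f)$ might fail to lie in $R$. Once $X$ and $Y$ are recovered, $h$ is obtained by division.

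The main obstacle is this uniqueness: showing that $X'$ is exactly the set of $\alpha\in\F_q$ with $(x-\alpha)\mid f$. I would argue that $h=f(x)+g(y)$ is divisible by $x-\alpha$ only if $g$ is constant, which is impossible since $d_y\ge1$. Hence the coordinate factors of $f$ are precisely those of the product, and the decomposition is determined by $f$.
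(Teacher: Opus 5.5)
Your proposal is correct and follows the paper's proof. You split $Z_{\F_q^2}(\mathcal M)$ according to $|in(f)\cap\max(\mathcal M)|$, use Proposition~\ref{prop.1init} when only one maximal monomial is a leading monomial and Proposition~\ref{prop.factorh} (with $\gcd=x^{a_1}y^{b_2}$) when both are, and obtain the reverse inclusion by multiplying the coordinate factors back and counting zeros.

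You also go beyond the paper, whose proof only establishes the set equality, by justifying the cardinality formula:
\begin{itemize}
\item disjointness holds because the representative in $R_{\F_q^2}$ has both $x^{a_1}y^{b_1}$ and $x^{a_2}y^{b_2}$ in its support;
\item injectivity holds because $h(\alpha,y)$ has degree $d_y\ge 1$ for every $\alpha$, so $h$ has no factor $x-\alpha$ (and symmetrically in $y$).
\end{itemize}
Keep that direct degree argument rather than the appeal to Theorem~\ref{th.maingeil}, which states the reverse implication. You also correctly notice that the paper's subcase $h\in\Pi_{X\times Y}(\mathcal M')$ is vacuous, since $in(h)=\{x^{d_x},y^{d_y}\}$.
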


\begin{proof}
We have
$\MW(\mathcal C(\F_q^2,\mathcal M))
=\{\ev(f):f\in Z_{\F_q^2}(\mathcal M)\}$.
Take
$f\in Z_{\F_q^2}(\mathcal M)$.
If $|in(f)\cap\max(\mathcal M)|=1$, then, by Proposition~\ref{prop.1init},
$f\in\Pi_{\F_q^2}(\mathcal M)$. In particular, we obtain
$\ev(f)\in\MW(\mathcal C(\F_q^2,\mathcal M_i))$
for some $i\in\{1,2\}$.

By Proposition~\ref{prop.factorh}, if
$|in(f)\cap\max(\mathcal M)|=2$, there exist
$X,Y\subseteq\F_q$, with $|X|=q-a_1$ and $|Y|=q-b_2$, and 
$h\in Z_{X\times Y}(\mathcal M')$
such that
\[
f=h \prod_{\alpha\notin X}(x-\alpha) \prod_{\beta\notin Y}(y-\beta).
\]
If $h\in\Pi_{X\times Y}(\mathcal M')$, then
$f\in\Pi_{\F_q^2}(\mathcal M)$. Therefore,
$\ev(f)\in\MW(\mathcal C(\F_q^2,\mathcal M_1))
\cup\MW(\mathcal C(\F_q^2,\mathcal M_2))$.
Otherwise, $(X,Y,h)\in\mathcal T(M')$.
Conversely, if $(X,Y,h)\in\mathcal T(M')$, then
\[
f=h\prod_{\alpha\notin X}(x-\alpha)\prod_{\beta\notin Y}(y-\beta)
\in Z_{\F_q^2}(\mathcal M),
\]
and consequently $\ev(f)\in\MW(\mathcal C(\F_q^2,\mathcal M))$.
This proves the result.

\end{proof}

Theorem~\ref{thm.totalcount} shows that every minimum-weight codeword arises either from one of the two extremal rectangles or from a nontrivial extremal polynomial on the reduced support. As an immediate consequence, we obtain a necessary condition for the existence of additional minimum-weight codewords.

\begin{corollary}
With the notation from Theorem~\ref{thm.totalcount}, if
\[
\MW(\mathcal C(\F_q^2,\mathcal M))\neq
\MW(\mathcal C(\F_q^2,\mathcal M_1))\cup
\MW(\mathcal C(\F_q^2,\mathcal M_2)),
\]
then
\[
\frac{q-a_1}{d_x}
=
\frac{q-b_2}{d_y}
\in\mathbb Z.
\]
\end{corollary}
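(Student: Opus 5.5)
The plan is to argue by contraposition, combining Theorem~\ref{thm.totalcount} with Proposition~\ref{prop:noPolyexist}. Assume the displayed inequality of minimum-weight sets holds. By Theorem~\ref{thm.totalcount}, every minimum-weight codeword lies in $\MW(\mathcal C(\F_q^2,\mathcal M_1))\cup\MW(\mathcal C(\F_q^2,\mathcal M_2))$ or is the evaluation of $h\prod_{\alpha\notin X}(x-\alpha)\prod_{\beta\notin Y}(y-\beta)$ for some $(X,Y,h)\in\mathcal T(\mathcal M')$. The inequality therefore forces $\mathcal T(\mathcal M')\neq\emptyset$. So we may fix a triple $(X,Y,h)$ with $h\in Z_{X\times Y}(\mathcal M')\setminus\Pi_{X\times Y}(\mathcal M')$.

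Next we identify the grid sizes. In the proof of Theorem~\ref{thm.totalcount}, coming from Proposition~\ref{prop.factorh}, the sets satisfy $|X|=q-a_1$ and $|Y|=q-b_2$. The reason is that $\gcd(x^{a_1}y^{b_1},x^{a_2}y^{b_2})=x^{a_1}y^{b_2}$, since $a_1<a_2$ and $b_1>b_2$. Removing $a_1$ coordinate factors in $x$ and $b_2$ in $y$ leaves a grid of size $(q-a_1)\times(q-b_2)$. This grid carries the decreasing set $\mathcal M'$ with $\max(\mathcal M')=\{x^{d_x},y^{d_y}\}$, where $d_x=a_2-a_1$ and $d_y=b_1-b_2$. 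We should also check that $\mathcal M'\subseteq\Delta(I_{X\times Y})$. This holds because $a_2\le q-1$ gives $d_x<q-a_1$, and similarly $d_y<q-b_2$.

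Finally, we apply Proposition~\ref{prop:noPolyexist} to $h$ on $S=X\times Y$, with $n_x=q-a_1$ and $n_y=q-b_2$. Since $h$ is a noncanonical extremal polynomial, the proposition gives $n_x/d_x=n_y/d_y\in\Z$, which is exactly
\[
\frac{q-a_1}{d_x}=\frac{q-b_2}{d_y}\in\mathbb Z.
\]

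The only delicate point is justifying that the grid sizes in the reduction are exactly $q-a_1$ and $q-b_2$. This follows from the equalities $a'=a$ and $b'=b$ established in the proof of Proposition~\ref{prop.factorh}, applied with $(a,b)=(a_1,b_2)$. Everything else is a direct citation.
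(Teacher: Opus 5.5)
Your proof is correct and follows the paper's approach: the paper's one-line proof cites Proposition~\ref{prop:noPolyexist}, applied to a triple in $\mathcal T(\mathcal M')$ on the $(q-a_1)\times(q-b_2)$ grid, whose existence Theorem~\ref{thm.totalcount} forces. Your checks of the grid sizes and of $\mathcal M'\subseteq\Delta(I_{X\times Y})$ make explicit what the paper leaves implicit; also, what you wrote is a direct argument, not a contraposition.
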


\begin{proof}
It follows from Proposition~\ref{prop:noPolyexist}.
\end{proof}

\begin{remark}\rm
The converse is, however, not true, as Example~\ref{ex.11} shows. A complete characterization of when
\[
\MW(\mathcal C(\F_q^2,\mathcal M))=
\MW(\mathcal C(\F_q^2,\mathcal M_1))\cup
\MW(\mathcal C(\F_q^2,\mathcal M_2))
\]
is still open.
\end{remark}

Therefore, the classification of minimum-weight codewords is reduced to the determination of the set $\mathcal T(M')$ appearing in Theorem~\ref{thm.totalcount}. This is precisely the extremal-zero problem studied in Sections~\ref{sec:structure} and~\ref{sec:quadratic}.

\subsection{Quadratic cases}
The general reduction becomes completely explicit when
\[
d_x,d_y\le 2.
\]

Combining the results of Section~\ref{sec:quadratic} with Theorem~\ref{thm.totalcount}, we obtain exact formulas for the number of minimum-weight codewords. 

The following theorem addresses the first family.
\begin{theorem}
Let $\mathcal M=\mathcal M_1\cup\mathcal M_2$ be decreasing sets with
\[
\max(\mathcal M_1)=\{x^{a+2}y^b\},\qquad
\max(\mathcal M_2)=\{x^{a}y^{b+1}\},
\]
for some $a<q-2$ and $b<q-1$ satisfying
$t=q-b=\frac{q-a}{2}\in\mathbb Z$. Then,
\[
\left|\MW\bigl(\mathcal C(\F_q^2,\mathcal M)\bigr)\right|
=(q-1)
\begin{cases}
\displaystyle
\binom{q}{a+2}\binom{q}{b}+\binom{q}{a}\binom{q}{b+1}
+(q-1)^2q\binom{q/2}{t},\\
\hfill \text{if }\mathrm{char}(\F_q)=2,\\[8pt]
\displaystyle
\binom{q}{a+2}\binom{q}{b}+\binom{q}{a}\binom{q}{b+1}
+(q-1)q^2\binom{(q-1)/2}{t},\\
\hfill \text{if }\mathrm{char}(\F_q)\neq 2.
\end{cases}
\]
\end{theorem}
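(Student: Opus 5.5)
The plan is to specialize Theorem~\ref{thm.totalcount} and then substitute the counts from Theorem~\ref{thm.countonegen} and Section~\ref{sec:quadratic}.

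\textbf{Step 1: match the notation of Theorem~\ref{thm.totalcount}.} I order the two maximal monomials as $x^{a}y^{b+1}$ and $x^{a+2}y^{b}$. In the theorem's notation this gives $a_1=a<a_2=a+2$ and $b_1=b+1>b_2=b$. Hence $d_x=2$, $d_y=1$, and $\mathcal M'=\{1,x,x^2,y\}$.

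\textbf{Step 2: check the equal-footprint hypothesis.} A direct computation gives
\[
z_{\F_q^2}(a,b+1)-z_{\F_q^2}(a+2,b)=2b-a-q .
\]
This vanishes exactly when $q-a=2(q-b)$, which is the assumption $t=q-b=(q-a)/2$. The reduced grid has $|X|=q-a_1=2t$ and $|Y|=q-b_2=t$, matching the hypotheses $n_x=2t$, $n_y=t$ of Propositions~\ref{prop.char2} and~\ref{prop.charneq2}. The conditions $a<q-2$ and $b<q-1$ ensure that $t\geq 1$ and that all monomials lie in $\Delta(I_{\F_q^2})$.

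\textbf{Step 3: count the union of the two canonical families.} By Theorem~\ref{thm.countonegen},
\[
|\MW(\mathcal C(\F_q^2,\mathcal M_1))|=(q-1)\binom{q}{a+2}\binom{q}{b},\qquad |\MW(\mathcal C(\F_q^2,\mathcal M_2))|=(q-1)\binom{q}{a}\binom{q}{b+1}.
\]
I need the union to have size equal to the sum, i.e.\ the two sets are disjoint. Since $\ev$ is injective on $R_{\F_q^2}$, it suffices to compare polynomials. A polynomial $c\prod_{\alpha\in X'}(x-\alpha)\prod_{\beta\in Y'}(y-\beta)$ determines $|X'|$ as its $x$-degree. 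The first family has $x$-degree $a+2$ and the second has $x$-degree $a$, so the families are disjoint.

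\textbf{Step 4: add the noncanonical contribution and simplify.} Theorem~\ref{thm.totalcount} adds $|\mathcal T(\mathcal M')|$ to the count from Step 3.
\begin{itemize}
\item In characteristic $2$, Proposition~\ref{prop.char2} gives $|\mathcal T(\mathcal M')|=(q-1)^3q\binom{q/2}{t}$.
\item In odd characteristic, Proposition~\ref{prop.charneq2} gives $|\mathcal T(\mathcal M')|=(q-1)^2q^2\binom{(q-1)/2}{t}$.
\end{itemize}
Factoring out $(q-1)$ yields exactly the two displayed cases.

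\textbf{Expected obstacle.} The only delicate point is making sure the additive formula of Theorem~\ref{thm.totalcount} is legitimate. This requires that distinct triples $(X,Y,h)$ give distinct codewords, and that these codewords are not canonical. Both follow from the uniqueness of the factorization in Proposition~\ref{prop.factorh}, together with Proposition~\ref{prop.1init}: here $in(f)$ contains both maximal monomials, so $f\notin\Pi$. Everything else is bookkeeping.
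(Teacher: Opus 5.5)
Your proposal is correct and follows the paper's proof, which simply combines Theorems~\ref{thm.countonegen} and~\ref{thm.totalcount} with Propositions~\ref{prop.char2} and~\ref{prop.charneq2}. Your explicit checks fill in details the paper leaves implicit: the equal-footprint condition, the reduced grid sizes $n_x=2t$, $n_y=t$, the disjointness of the two canonical families via the $x$-degree, and the injectivity of $(X,Y,h)\mapsto\ev(f)$. That injectivity really rests on unique factorization in $\F_q[x,y]$ (here $h$ is linear in $y$ with nonzero constant coefficient, so it has no coordinate factors), not on Proposition~\ref{prop.factorh}, which only asserts existence; in any case it is already built into Theorem~\ref{thm.totalcount}.
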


\begin{proof}
The result follows from Propositions~\ref{prop.char2},
\ref{prop.charneq2} and Theorems~\ref{thm.countonegen}
and~\ref{thm.totalcount}.
\end{proof}

The second family corresponds to the symmetric configuration.
\begin{theorem}
Let $\mathcal M=\mathcal M_1\cup\mathcal M_2$ be decreasing sets with
\[
\max(\mathcal M_1)=\{x^{a+2}y^a\},\qquad
\max(\mathcal M_2)=\{x^ay^{a+2}\},
\]
for some $a<q-2$ satisfying $t=\frac{q-a}{2}\in\mathbb Z$. Then,
\[
\left|\MW\bigl(\mathcal C(\F_q^2,\mathcal M)\bigr)\right|
=2(q-1)\binom{q}{a}\binom{q}{a+2}+\Gamma(q,t),
\]
where
\[
\Gamma(q,t)=
\begin{cases}
\displaystyle
\frac{(q-1)^3q}{2}\left[\binom{q/2}{t}+
2(q-2)\binom{q/4}{t}\right],
&q\equiv 0\pmod 2,\\[12pt]
\displaystyle
\frac{q^2(q-1)^2}{2}\binom{\frac{q-1}{2}}{t}+
\frac{q^2(q-1)^3}{4}
\left[\binom{\frac{q-5}{4}}{t}+
3\binom{\frac{q-1}{4}}{t}\right],
&q\equiv 1\pmod 4,\\[12pt]
\displaystyle
\frac{q^2(q-1)^2}{2}\binom{\frac{q-1}{2}}{t}+
\frac{q^2(q-1)^3}{4}\left[
3\binom{\frac{q-3}{4}}{t}+\binom{\frac{q+1}{4}}{t}
\right],
&q\equiv 3\pmod 4.
\end{cases}
\]
\end{theorem}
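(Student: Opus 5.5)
The plan is to follow the proof of the previous theorem and combine Theorem~\ref{thm.totalcount} with Theorem~\ref{thm.countonegen}, Theorem~\ref{thm.char2} and Theorem~\ref{thm.charodd}. Here $(a_1,b_1)=(a,a+2)$ and $(a_2,b_2)=(a+2,a)$, after relabelling so that $a_1<a_2$ and $b_1>b_2$ as Theorem~\ref{thm.totalcount} requires.

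First I check the hypotheses. We have $z_{\F_q^2}(a,a+2)=q(a+2)+qa-a(a+2)=z_{\F_q^2}(a+2,a)$, so the two footprint values agree. Hence $d_x=d_y=2$, and the reduced set is $\mathcal M'=\{1,x,x^2,y,y^2\}$. The reduced grid has $n_x=q-a_1=q-a$ and $n_y=q-b_2=q-a$, so $n_x=n_y=2t$. This is exactly the setting of Theorems~\ref{thm.char2} and~\ref{thm.charodd}; in odd characteristic the parameter $a$ there plays the role of the present $a$.

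Theorem~\ref{thm.totalcount} then gives
\[
|\MW(\mathcal C(\F_q^2,\mathcal M))|=|\MW(\mathcal C(\F_q^2,\mathcal M_1))\cup\MW(\mathcal C(\F_q^2,\mathcal M_2))|+|\mathcal T(\mathcal M')|.
\]
Substituting the formulas of Theorems~\ref{thm.char2} and~\ref{thm.charodd} for $|\mathcal T(\mathcal M')|$ produces $\Gamma(q,t)$ case by case. For $q$ even, $\Gamma$ is the formula of Theorem~\ref{thm.char2}. For $q\equiv1$ or $3\pmod 4$, it is the corresponding branch of Theorem~\ref{thm.charodd}. By Theorem~\ref{thm.countonegen}, each of $|\MW(\mathcal C(\F_q^2,\mathcal M_i))|$ equals $(q-1)\binom{q}{a}\binom{q}{a+2}$.

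The main obstacle is the union. The sum $2(q-1)\binom{q}{a}\binom{q}{a+2}$ is only correct if the two minimum-weight sets are disjoint. To prove this I would argue on evaluation vectors rather than on polynomials. A codeword of $\MW(\mathcal C(\F_q^2,\mathcal M_1))$ vanishes exactly on $(X'\times\F_q)\cup(\F_q\times Y')$ with $|X'|=a$ and $|Y'|=a+2$. A codeword of the other set vanishes exactly on such a union with $|X'|=a+2$ and $|Y'|=a$.

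The zero set determines the full rows and columns it contains. A row $\F_q\times\{\beta\}$ with $\beta\notin Y'$ meets the zero set in only $a<q$ points, so it is not contained in it; similarly for columns. Hence the zero set recovers $(X',Y')$, and the two zero-set shapes cannot coincide, since $a\neq a+2$. So the union is disjoint, its size is the sum, and adding $|\mathcal T(\mathcal M')|$ gives the stated formula.

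The same observation also confirms the implicit claim in Theorem~\ref{thm.totalcount} that the noncanonical codewords are new. Their extremal polynomials have $in=\max(\mathcal M)$, while every canonical one has only a single initial monomial. Since $\ev$ is injective on $R_{\F_q^2}$, the corresponding codewords are distinct, and no double counting occurs with the $\mathcal T(\mathcal M')$ term either.
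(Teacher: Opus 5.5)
Your proposal is correct and follows the same route as the paper, whose proof simply combines Theorems~\ref{thm.char2}, \ref{thm.charodd}, \ref{thm.countonegen} and~\ref{thm.totalcount} once $d_x=d_y=2$ and $n_x=n_y=q-a=2t$ are identified. Your check that $\MW(\mathcal C(\F_q^2,\mathcal M_1))$ and $\MW(\mathcal C(\F_q^2,\mathcal M_2))$ are disjoint fills in a step the paper uses tacitly to get the factor $2$, and it could be shortened: the two canonical polynomials have $x$-degrees $a+2\neq a$, and $\ev$ is injective on $R_{\F_q^2}$.
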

\begin{proof}
It follows from Theorems~\ref{thm.char2}, \ref{thm.charodd},
\ref{thm.countonegen}, and~\ref{thm.totalcount}.
\end{proof}

These formulas completely determine the minimum-weight codewords in the quadratic cases considered in this paper.

The results obtained in this section illustrate how the study of extremal polynomials on Cartesian grids naturally translates into coding-theoretic information. The reduction theory of Sections~\ref{sec:reduction} and~\ref{sec:structure} identifies configurations that may produce nontrivial minimum-weight codewords, while the explicit analysis of Section~\ref{sec:quadratic} yields exact counting formulas in the first nontrivial degrees.

More generally, Theorem~\ref{thm.totalcount} shows that determining minimum-weight codewords in decreasing codes with two maximal monomials is equivalent to understanding the corresponding extremal-zero problem. Thus, the structural results of this paper provide a framework for studying minimum-weight codewords beyond the classical Reed--Muller setting.

\section*{Declaration of AI}
ChatGPT was used for editorial purposes. After using this tool, the authors reviewed and edited the content as needed, and the authors remain responsible for all results.

\bibliography{ref}
\bibliographystyle{abbrv}

\end{document}